\newtheorem{theorem}{Theorem}
\newtheorem{thm}{Theorem}
\newtheorem*{thm2a}{Corollary 2A}
\newtheorem*{thm2b}{Corollary 2B}
\newtheorem*{thmm}{Theorem}
\newtheorem*{reg}{\textnormal{\textbf{Class}} $\mathbf{\Omega}$}
\newtheorem*{VL}{Vaaler's Lemma}
\newtheorem{cor}{Corollary}
\newtheorem{lemma}[theorem]{Lemma}
\newtheorem{rem}{Remark}
\newtheorem{prop}{Proposition}
\newtheorem*{propp}{Proposition}
\numberwithin{equation}{section}
\begin{document}

\title[Lattice points in shrinking Cygan-Kor{\'a}nyi spherical shells]{Lattice point counting statistics for 3-dimensional shrinking Cygan-Kor{\'a}nyi spherical shells}
\author{YOAV A. GATH}
\address{Centre for Mathematical Sciences, Wilberforce Road, Cambridge CB3 0WA, United Kingdom}
\curraddr{8 Cosin Court, Cambridge CB2 1QU, United Kingdom}
\email{yg395@cantab.ac.uk}
\thanks{Yoav A. Gath has received funding from the European Research Council (ERC) under the European Union’s Horizon 2020 research and innovation programme
(grant agreement No. 803711)}

\subjclass[2010]{11P21, 11K70, 62E20}
\keywords{Cygan-Kor{\'a}nyi norm, Lattice points in norm balls, limiting distribution}

\begin{abstract}
Let $\mathcal{E}(x;\omega)$ be the error term for the number of integer lattice points lying inside a $3$-dimensional Cygan-Kor{\'a}nyi spherical shell of inner radius $x$ and gap width $\omega(x)>0$. Assuming that $\omega(x)\to0$ as $x\to\infty$, and that $\omega$ satisfies suitable regularity conditions, we prove that $\mathcal{E}(x;\omega)$, properly normalized, has a limiting distribution. Moreover, we show that the corresponding distribution is moment-determinate, and we give a closed form expression for its moments. As a corollary, we deduce that the limiting distribution is the standard Gau$\mathbf{\ss}$ian measure whenever $\omega$ is slowly varying. We also construct gap width functions $\omega$, whose corresponding error term has a limiting distribution that is absolutely continuous with a non-Gau$\mathbf{\ss}$ian density.
\end{abstract}




\maketitle
\tableofcontents
\section{Introduction, notation and statement of results}
\subsection{Introduction}
Let $N(x)$ be the counting function for the number of integer lattice points lying inside a $3$-dimensional Cygan-Kor{\'a}nyi ball of radius $x$,
\begin{equation*}
N(x)=\big|\mathbb{Z}^{3}\cap\delta_{x}\mathcal{B}\big|\,,
\end{equation*}
where $\mathcal{B}=\big\{\mathbf{u}\in\mathbb{R}^{3}:|\mathbf{u}|_{{\scriptscriptstyle Cyg}}\leq1\big\}$ is the unit ball with respect to the Cygan-Kor{\'a}nyi norm
\begin{equation*}
|\mathbf{u}|_{{\scriptscriptstyle Cyg}}=\Big(\Big(u^{2}_{1}+u^{2}_{2}\Big)^{2}+u^{2}_{3}\Big)^{1/4}\,,
\end{equation*}
and $\delta_{x}\mathbf{u}=(xu_{{\scriptscriptstyle 1}}, xu_{{\scriptscriptstyle2}},x^{2}u_{{\scriptscriptstyle3}})$ with $x>0$ are the Heisenberg dilations. It is clear that $N(x)$ grows for large $x$ like $\textit{vol}(\mathcal{B})x^{4}$, where $\textit{vol}(\cdot)$ is the standard Euclidean
volume. Define the error term $\mathcal{E}(x)$ to be
\begin{equation*}
\mathcal{E}(x)=N(x)-\textit{vol}(\mathcal{B})x^{4}\,,
\end{equation*}
We remark that estimating $N(x)$ (resp. bounding $\mathcal{E}(x)$) is a natural problem that arises in relation to the homogeneous structure imposed on the first Heisenberg group $\mathbb{H}_{1}$ when realized over $\mathbb{R}^{3}$ (see \cite{garg2015lattice,gath2019analogue}).\\
In \cite{gath2017best} aspects relating to the order of magnitude of the error term have been investigated, the main result being the determination of the exponent of growth $\kappa:=\sup\{\alpha>0:|\mathcal{E}(x)|\ll x^{4-\alpha}\}=2$. In \cite{3Dgathdistribution} we turned attention to more subtle questions regarding the nature in which $N(x)$ fluctuates about its expected value $\textit{vol}(\mathcal{B})x^{4}$. In order for us to place the results of the present paper in a proper context, we quote the following theorem.   
\begin{thmm}[\cite{3Dgathdistribution}]
The suitably normalized error term $\widehat{\mathcal{E}}(x)=\mathcal{E}(x)/x^{2}$ has a limiting distribution, namely, there exists a probability measure $\nu$ such that for any bounded (piecewise)-continuous function $\mathcal{F}$,
\begin{equation*}
(\star)\quad\quad\lim\limits_{X\to\infty}\frac{1}{X}\int\limits_{ X}^{2X}\mathcal{F}\big(\widehat{\mathcal{E}}(x)\big)\textit{d}x=\int\limits_{-\infty}^{\infty}\mathcal{F}(\alpha)\textit{d}\nu(\alpha)\,.
\end{equation*}
Moreover, the following holds.
\begin{itemize}
\item[\textnormal{\textbf{(a)}}] The limiting distribution is absolutely continuous $\textit{d}\nu(\alpha)=\mathcal{P}(\alpha)\textit{d}\alpha$.
\item[\textnormal{\textbf{(b)}}] The corresponding density $\mathcal{P}$ is real analytic, and in particular has support on all of the real line. It satisfies for any non-negative integer $j\geq0$ and any $\alpha\in\mathbb{R}$, $|\alpha|$ sufficiently large in terms of $j$, the decay estimate $|\mathcal{P}^{(j)}(\alpha)|\leq\exp{\Big(-\rho|\alpha|\exp{\big(\rho|\alpha|\big)}\Big)}$, where $\rho>0$ is an absolute constant.
\item[\textnormal{\textbf{(c)}}] $\int_{-\infty}^{\infty}\alpha\mathcal{P}(\alpha)\textit{d}\alpha=0$, while $\int_{-\infty}^{\infty}\alpha^{k}\mathcal{P}(\alpha)\textit{d}\alpha<0$ for all odd integers $k>1$. In particular, the limiting distribution is asymmetric.
\item[\textnormal{\textbf{(d)}}] $(\star)$ extends to the class of (piecewise)-continuous functions $\mathcal{F}$ of polynomial growth. In particular, $\widehat{\mathcal{E}}(x)$ has finite moments of any order.
\end{itemize}
%
%

%
%
%
%
\end{thmm}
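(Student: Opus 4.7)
My plan is to base the proof on a Hardy-Voronoi-type expansion for $\widehat{\mathcal{E}}(x)$, obtained by applying Poisson summation to a smoothed version of $N(x)$ and analyzing the Fourier transform of the indicator function of the Cygan-Kor{\'a}nyi unit ball. Stationary phase on $\partial\mathcal{B}$, where the Gaussian curvature is non-vanishing away from the singular equator $\{u_3=0\}$ (whose contribution I expect to be absorbed into a lower-order error), should yield a representation of the form
\[
\widehat{\mathcal{E}}(x)=\sum_{\mathbf{m}\in\mathbb{Z}^{3}\setminus\{0\}}a(\mathbf{m})\cos\big(2\pi\phi(\mathbf{m})x+\theta(\mathbf{m})\big)+\mathcal{R}(x),
\]
where the phases $\phi(\mathbf{m})$ are essentially the Cygan-Kor{\'a}nyi dual norm of $\mathbf{m}$ and the amplitudes $a(\mathbf{m})$ decay fast enough to be square-summable. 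I would then truncate the sum at a parameter $M$ and control the tail via a mean-square estimate, using $\kappa=2$ from \cite{gath2017best} as input to ensure $\widehat{\mathcal{E}}$ is $L^{\infty}$-bounded on $[X,2X]$.

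Next, to establish the existence of a limiting distribution I would invoke the Weyl-Kronecker theorem. Ordering the distinct frequencies $\lambda_{1}<\lambda_{2}<\cdots$ occurring in the truncated sum, the orbit $x\mapsto(\lambda_{1}x,\lambda_{2}x,\dots)$ equidistributes on a compact subtorus of $(\mathbb{R}/\mathbb{Z})^{\mathbb{N}}$. This yields a limit law for the truncated sum, and letting $M\to\infty$ gives $\widehat{\mathcal{E}}$ itself a limit distribution $\nu$ whose characteristic function is an infinite product of Bessel functions $J_{0}(c_{j}\xi)$, arising from averaging $\cos$ of each linear coordinate over its circle factor.

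For parts \textbf{(a)} and \textbf{(b)}, I would exploit the Bessel decay $|J_{0}(t)|\ll|t|^{-1/2}$ combined with a sharp count of dual Cygan-Kor{\'a}nyi lattice points in dyadic shells; this should give at least super-polynomial decay of the characteristic function of $\nu$, whence Fourier inversion produces a real-analytic density $\mathcal{P}$. Strengthening this to the double-exponential decay stated in \textbf{(b)} is what I expect to be the main technical obstacle: it requires a very precise asymptotic count of lattice points of Cygan-Kor{\'a}nyi norm at most $R$, together with careful uniform Bessel asymptotics, in order to control $\log|\prod_{j}J_{0}(c_{j}\xi)|$ from below by a quantity of order $|\xi|\exp(\rho|\xi|)$.

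Finally, for parts \textbf{(c)} and \textbf{(d)}: the extension to $\mathcal{F}$ of polynomial growth follows from the decay of $\mathcal{P}$ together with uniform-in-$X$ moment bounds on $\widehat{\mathcal{E}}(x)$ extracted from the truncated expansion. Vanishing of the first moment is essentially $\int_{X}^{2X}\widehat{\mathcal{E}}(x)dx=o(X)$. The asymmetry, namely negativity of odd moments with $k>1$, is genuinely subtle, since it reflects sign patterns in the Voronoi coefficients $a(\mathbf{m})$ combined with arithmetic resonances among the phases $\phi(\mathbf{m})$; my approach would be to isolate a leading resonant contribution from a distinguished family of dual lattice points and show that the higher-order resonances cannot overturn its sign, perhaps by bounding them uniformly using the amplitude decay established in the first step.
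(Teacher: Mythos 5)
The theorem you are proving is quoted from \cite{3Dgathdistribution}, and the one ingredient of its proof reproduced in the present paper is the Vorono\"{i}-type expansion \eqref{eq:2.3}: the normalized error is $-\frac{\sqrt{2}}{\pi}\sum_{m\leq X^{2}}\frac{r_{2}(m)}{m}\cos(2\pi\sqrt{m}x)$ plus a $\psi$-sum that is negligible in mean square. Measured against this, your proposal has a structural flaw that makes item \textnormal{(c)} unprovable within your framework. You model the limit law as having characteristic function $\prod_{j}J_{0}(c_{j}\xi)$, i.e.\ you treat the frequencies as if they were linearly independent over $\mathbb{Q}$ so that each cosine contributes an independent arcsine-distributed variable. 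But a product of $J_{0}$'s is the characteristic function of a \emph{symmetric} measure, whose odd moments all vanish --- contradicting the asserted strict negativity of the odd moments of order $k>1$. The actual frequency set $\{\sqrt{m}\}$ is riddled with rational dependencies ($\sqrt{mk^{2}}=k\sqrt{m}$); by Besicovitch's lemma only the square-free $\sqrt{f}$ are independent, so the correct limiting object is a sum of independent variables $Y_{f}=-\frac{\sqrt{2}}{\pi}\sum_{k}\frac{r_{2}(fk^{2})}{fk^{2}}\cos(2\pi k\theta_{f})$ with $\theta_{f}$ uniform: each $Y_{f}$ is a non-trivial trigonometric polynomial in $\theta_{f}$, hence non-symmetric, and this harmonic clustering is precisely the source of the asymmetry in \textnormal{(c)}. (The same clustering into square-free classes is visible in the present paper's treatment of the diagonal terms in Lemma $4$, via \eqref{eq:2.47}--\eqref{eq:2.48}.) Your fallback plan of isolating a ``leading resonant family'' cannot be carried out until the expansion itself records these resonances.

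Two further gaps. First, your geometric starting point is off: the Cygan-Kor{\'a}nyi sphere is not a surface whose non-degenerate part dominates with the singular locus absorbed into a lower-order error. The expansion \eqref{eq:2.3} is a \emph{one-parameter} family of frequencies $\sqrt{m}$ with weights $r_{2}(m)/m$, not a three-dimensional dual-lattice sum with stationary-phase amplitudes; the order-$x^{2}$ fluctuation (note $\kappa=2$, far above the generic convex-body exponent) is produced exactly by the degenerate locus you propose to discard, so your expansion would lose the main term. Second, the doubly exponential tail bound in \textnormal{(b)} does not follow from decay of the characteristic function on the real axis: $|J_{0}(t)|\ll t^{-1/2}$ together with $c_{j}\asymp 1/j$ yields at best $|\widehat{\nu}(\xi)|\ll e^{-c|\xi|}$, which gives real-analyticity of $\mathcal{P}$ (items \textnormal{(a)} and the first half of \textnormal{(b)}) but says nothing about the size of $\mathcal{P}(\alpha)$ for large $|\alpha|$. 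The bound $\exp(-\rho|\alpha|\exp(\rho|\alpha|))$ is a large-deviation statement: one must estimate exponential moments of the almost periodic sum, exploiting that coefficients of size $\gg 1/m$ force exponentially many terms to conspire before the sum can reach height $\alpha$. The skeleton of your argument --- truncation, mean-square control of the tail, Kronecker--Weyl equidistribution, passage to the limit in the truncation --- is the right one and would deliver $(\star)$ and \textnormal{(d)} once the expansion and the independence structure are corrected, but \textnormal{(b)} (tails) and \textnormal{(c)} require the additional inputs described above.
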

\noindent
Motivated by the above quoted theorem, we ask what can be said about the statistics of the (suitably normalized) error term corresponding to the number of integer lattice points lying inside a $3$-dimensional Cygan-Kor{\'a}nyi spherical shell $\delta_{(x+\omega(x))}\mathcal{B}\setminus\delta_{x}\mathcal{B}$ of inner radius $x$ and gap width $\omega(x)>0$.\\
There are two extreme regimes that one can consider: one in which the width of the spherical shell grows as the the radius increases, and the other in which it shrinks. The so-called ``saturation" regime in which the width stays bounded should be thought of as somewhat of a transition phase between the two.\\
In the present paper we shall focus our attention on the case where the width shrinks. Our goal will be to state under which conditions a limiting distribution exists, and describe the properties of this limiting distribution in relation to the gap width function $\omega$. In particular, we shall aim, whenever possible, to establish analogues of items \textnormal{(a)-(d)} in the above quoted theorem. As we shall see, the picture that emerges will be quite different. Let us now proceed to introduce to the setup we shall be working with.      
\subsection{Lattice points in shrinking $3$-dimensional Cygan-Kor{\'a}nyi spherical shells}
We begin by introducing the relevant notation and definitions we sall be using throughout the paper. The counting function $N(x;\omega)$ for the number of integer lattice points lying inside a $3$-dimensional Cygan-Kor{\'a}nyi spherical shell $\delta_{(x+\omega(x))}\mathcal{B}\setminus\delta_{x}\mathcal{B}$ of inner radius $x$ and gap width $\omega(x)>0$ is defined by  
\begin{equation*}
\begin{split}
N(x;\omega)&= \big|\mathbb{Z}^{3}\cap\big(\delta_{(x+\omega(x))}\mathcal{B}\setminus\delta_{x}\mathcal{B}\big)\big|\\
&=N(x+\omega(x))-N(x)\,.
\end{split}
\end{equation*}
We shall be exclusively concerned with the case where the width of the spherical shell shrinks as the radius increases, that is $\omega(x)\to0$ as $x\to\infty$. Note that we are not requiring that $\omega$ be monotonic. We shall however require that it satisfy suitable regularity conditions. We shall specify all of these requirements shortly.\\
The expected value of $N(x;\omega)$ is the volume of the corresponding spherical shell, that is $\textit{vol}\big(\delta_{(x+\omega(x))}\mathcal{B}\setminus\delta_{x}\mathcal{B}\big)=\textit{vol}(\mathcal{B})\sum_{j=1}^{4}\binom{4}{j}x^{4-j}\omega^{j}(x)$. Define the error term $\mathcal{E}(x;\omega)$ to be
\begin{equation*}
\mathcal{E}(x;\omega)=N(x;\omega)-\textit{vol}(\mathcal{B})\sum_{j=1}^{4}\binom{4}{j}x^{4-j}\omega^{j}(x)\,,
\end{equation*}
and set 
\begin{equation*}
\widehat{\mathcal{E}}(x;\omega)=\mathcal{E}(x;\omega)/x^{2}\,.
\end{equation*}
For $X>0$, let $\sigma^{2}(X;\omega)$ be the variance of $\widehat{\mathcal{E}}(x;\omega)$ defined by
\begin{equation*}
\sigma^{2}(X;\omega)=\frac{1}{X}\int\limits_{X}^{2X}\widehat{\mathcal{E}}^{2}(x;\omega)\textit{d}x\,.
\end{equation*}
Consider the random variable $x\mapsto\widehat{\mathcal{E}}(x;\omega)/\sigma(X;\omega)$ with $x$ uniformly distributed in the segment $X<x<2X$, and let $\mu_{{\scriptscriptstyle X,\omega}}$ be the corresponding distribution, defined for an interval $\mathcal{A}$ on the real line by
\begin{equation*}
\int_{\mathcal{A}}\textit{d}\mu_{{\scriptscriptstyle X,\omega}}(\alpha)=\frac{1}{X}\textit{meas}\big\{X<x<2X:\widehat{\mathcal{E}}(x;\omega)/\sigma(X;\omega)\in\mathcal{A}\big\}\,,
\end{equation*}
where $\textit{meas}$ is the Lebesgue measure.\\
We seek to establish the existence of a limiting distribution for $\widehat{\mathcal{E}}(x;\omega)/\sigma(X;\omega)$, that is, prove that there exists a probability measure $\mu_{{\scriptscriptstyle\omega}}$ such that $\mu_{{\scriptscriptstyle X,\omega}}\to\mu_{{\scriptscriptstyle\omega}}$ weakly, as $X\to\infty$. To achieve this, we must have some control over the gap width function $\omega$. We introduce the following class.
\begin{reg}
Let $\omega\in C^{{\scriptscriptstyle2}}(\mathbb{R}_{>0})$, and for $X>0$ let $\mathcal{U}^{\omega}_{{\scriptscriptstyle X}}=\{X\leq x\leq2X:\omega^{{\scriptscriptstyle(1)}}(x)=0\}$ and $\mathcal{V}^{\omega}_{{\scriptscriptstyle X}}:=\{X\leq x\leq2X:\omega^{{\scriptscriptstyle(2)}}(x)=0\}$. We shall say that $\omega\in\mathbf{\Omega}$ if it satisfies the regularity conditions listed below.
\begin{itemize}
\item[\textbf{(1)}] $\omega$ is strictly positive $\omega(x)>0$, and vanishes at infinity $\omega(x)\to0$ as $x\to\infty$.
\item[\textbf{(2)}] $|\omega^{{\scriptscriptstyle(1)}}(x)|<\frac{1}{2}$ for all sufficiently large $x>0$.
\item[\textbf{(3)}] $\mathcal{U}^{\omega}_{{\scriptscriptstyle X}}$ and $\mathcal{V}^{\omega}_{{\scriptscriptstyle X}}$ are finite for all sufficiently large $X$, and the following bounds hold:
\begin{itemize}
\item[\textbf{(3\textnormal{\textbf{a}})}] $\big|\mathcal{U}^{\omega}_{{\scriptscriptstyle X}}\big|\max\limits_{X\leq\,x\,\leq2X}\omega(x)\ll X^{1/2}$.
\item[\textbf{(3\textnormal{\textbf{b}})}] There exists $\xi_{{\scriptscriptstyle\omega}}>0$ such that $\big|\mathcal{V}^{\omega}_{{\scriptscriptstyle X}}\big|\ll X^{1-\xi_{{\scriptscriptstyle\omega}}}$.
\end{itemize}
\item[\textbf{(4)}] For a non-negative integer $j$ and $X>0$, let
\begin{equation*}
\mathscr{M}_{j}(X;\omega)=\frac{1}{X}\int\limits_{X}^{2X}\big(\omega(x)\log{\omega(x)}\big)^{j}\textit{d}x\,.
\end{equation*}
Then the following holds:
\begin{itemize}
\item[\textbf{(4\textnormal{\textbf{a}})}] $\tau(\omega):=\inf\{\alpha>0: \mathscr{M}_{2}(X;\omega)\gg X^{-\alpha}\}=0$.
\item[\textbf{(4\textnormal{\textbf{b}})}] The limit
\begin{equation*}
\mathscr{L}_{j}(\omega):=\lim_{X\to\infty}\frac{\mathscr{M}_{j}(X;\omega)}{\mathscr{M}^{j/2}_{2}(X;\omega)}
\end{equation*}
exists for all $j\equiv0\,(2)$.
\item[\textbf{(4\textnormal{\textbf{c}})}] The sequence $\mathfrak{m}_{\omega,j}=\frac{j!}{2^{j/2}(j/2)!}\mathscr{L}_{j}(\omega)$ with $j\equiv0\,(2)$ satisfies Carleman’s criterion, that is, we have
\begin{equation*}
\underset{j\equiv0\,(2)}{\sum_{j=0}^{\infty}}\mathfrak{m}^{-1/j}_{\omega,j}=\infty\,.
\end{equation*}
\end{itemize}
\end{itemize}
\end{reg}
%
\noindent
We shall comment on the above conditions later on, and explain their role in the various stages of proofs (see $\S1.3$, proof outline). Let us now proceed to state our main findings.
\subsection{Statement of the main results}
\begin{thm}[Main Theorem]\label{Main Theorem} Let $\omega\in\mathbf{\Omega}$. Then $\widehat{\mathcal{E}}(x;\omega)/\sigma(X;\omega)$ has a limiting distribution, namely, there exists a probability measure $\mu_{{\scriptscriptstyle\omega}}$ such that for any bounded (piecewise)-continuous function $\mathcal{F}$ whose points of discontinuity are points of continuity of $\mu_{{\scriptscriptstyle\omega}}$, we have 
\begin{equation}\label{eq:1.1}
\lim\limits_{X\to\infty}\frac{1}{X}\int\limits_{ X}^{2X}\mathcal{F}\bigg(\frac{\widehat{\mathcal{E}}(x;\omega)}{\sigma(X;\omega)}\bigg)\textit{d}x=\int\limits_{-\infty}^{\infty}\mathcal{F}(\alpha)\textit{d}\mu_{{\scriptscriptstyle\omega}}(\alpha)\,.
\end{equation}
Moreover, the following holds.
%
%
%
%
%
%
\begin{itemize}
\item[\textnormal{\textbf{(A)}}] $\mu_{{\scriptscriptstyle\omega}}$ is moment-determinate, i.e., it is uniquely characterised by its moments, and these are given by
\begin{equation}\label{eq:1.2}
\int\limits_{-\infty}^{\infty}\alpha^{j}\textit{d}\mu_{{\scriptscriptstyle\omega}}(\alpha)=\left\{
        \begin{array}{ll}
           \frac{j!}{2^{j/2}(j/2)!}\mathscr{L}_{j}(\omega) & ;\,j\equiv0\,(2)
            \\\\
           0 & ;\, j\equiv1\,(2)\,.
        \end{array}
    \right.
\end{equation}
In particular, if the limiting distribution has a density, then this density is symmetric.
\item[\textnormal{\textbf{(B)}}] \eqref{eq:1.1} extends to the class of (piecewise)-continuous functions $\mathcal{F}$ of polynomial growth whose points of discontinuity are points of continuity of $\mu_{{\scriptscriptstyle\omega}}$. In particular, $\widehat{\mathcal{E}}(x;\omega)/\sigma(X;\omega)$ has finite moments of any order.\\
\end{itemize}
\end{thm}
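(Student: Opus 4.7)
My plan is to follow the now-classical approach for limiting distribution theorems of this type (as in the author's work \cite{3Dgathdistribution} on the full ball, and ultimately going back to Bleher--Dyson--Lebowitz): (i) approximate $\widehat{\mathcal{E}}(x;\omega)$ in mean square by a truncated Hardy--Voronoi-type trigonometric sum whose amplitudes depend on $\omega$; (ii) identify the variance $\sigma^{2}(X;\omega)$ on the scale $\mathscr{M}_{2}(X;\omega)$; (iii) compute higher moments by a Wick-type diagonal argument; (iv) invoke Carleman's criterion to conclude moment-determinacy, and pass to the limit by the method of moments.

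Starting from the Hardy--Voronoi series representation for $\mathcal{E}(x)$ underlying \cite{3Dgathdistribution}, the identity $\mathcal{E}(x;\omega)=\mathcal{E}(x+\omega(x))-\mathcal{E}(x)$ produces a series for $\mathcal{E}(x;\omega)$ in which each Fourier component of frequency $\phi_{n}$ acquires the difference factor $\sin(\pi\phi_{n}\omega(x))\,e^{i\vartheta_{n}(x,\omega(x))}$. Truncating at height $M$ yields an approximant $\widehat{\mathcal{E}}_{{\scriptscriptstyle M}}(x;\omega)$, and the mean-square tail bound
\[
\tfrac{1}{X}\int_{X}^{2X}\bigl(\widehat{\mathcal{E}}(x;\omega)-\widehat{\mathcal{E}}_{{\scriptscriptstyle M}}(x;\omega)\bigr)^{2}\,dx \,\ll\, \eta(M)\,\mathscr{M}_{2}(X;\omega),\qquad \eta(M)\to0,
\]
is to be established via Parseval together with van~der Corput / integration-by-parts estimates on the tail, where (2) keeps $x\mapsto x+\omega(x)$ injective, (3a) and (3b) control the exceptional sets $\mathcal{U}^{\omega}_{{\scriptscriptstyle X}}$ and $\mathcal{V}^{\omega}_{{\scriptscriptstyle X}}$ where the stationary-phase estimates degrade, and (4a) ensures this tail is genuinely smaller than the variance scale.

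A direct diagonal computation of $\tfrac{1}{X}\int_{X}^{2X}\widehat{\mathcal{E}}_{{\scriptscriptstyle M}}^{2}\,dx$ (only $\pm$-paired frequencies survive) should give an asymptotic $c_{{\scriptscriptstyle M}}\,\mathscr{M}_{2}(X;\omega)$ with $c_{{\scriptscriptstyle M}}\to c>0$, so $\sigma^{2}(X;\omega)\asymp\mathscr{M}_{2}(X;\omega)$. More generally, expanding $(\widehat{\mathcal{E}}_{{\scriptscriptstyle M}})^{j}$ and integrating in $x$, non-diagonal frequency tuples contribute $o(1)$ by van~der Corput plus (3b), while the surviving terms organize themselves into the $(j/2)$-fold pairings of the $j$ frequencies; this is where the Wick-type constant $\tfrac{j!}{2^{j/2}(j/2)!}$ arises, and also where odd-$j$ moments vanish by parity. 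The amplitudes of the surviving terms assemble into a spectral sum that, after partial-summation over $\phi_{n}$ up to the natural cut-off at scale $1/\omega(x)$, behaves like $(\omega(x)\log\omega(x))^{j}$; integrating in $x$ then produces $\mathscr{M}_{j}(X;\omega)$. Dividing by $\sigma^{j}(X;\omega)\sim\mathscr{M}_{2}^{j/2}(X;\omega)$ and invoking (4b) recovers exactly \eqref{eq:1.2} after letting $X\to\infty$ followed by $M\to\infty$.

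Finally, Carleman's criterion (4c) identifies a unique probability measure $\mu_{{\scriptscriptstyle\omega}}$ with these moments; tightness of $\{\mu_{{\scriptscriptstyle X,\omega}}\}$ (from the uniform bound on the second moment) together with convergence of all moments then delivers $\mu_{{\scriptscriptstyle X,\omega}}\to\mu_{{\scriptscriptstyle\omega}}$ weakly by the method of moments, yielding \eqref{eq:1.1} and (A); the polynomial-growth extension (B) follows from the uniform $L^{p}$-control that the moment calculation already provides. I expect the main obstacle to be Step (iii): one must rigorously handle the $x$-dependent reweighting of the Fourier spectrum by $\sin(\pi\phi_{n}\omega(x))$, show that off-diagonal contributions are truly $o(1)$ despite the slowly-decaying and oscillatory nature of these amplitudes, and identify the correct leading-order factor as precisely $\mathscr{M}_{j}(X;\omega)$---this is where the regularity hypotheses (2)--(3) on $\omega$ enter most delicately.
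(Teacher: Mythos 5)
Your proposal follows essentially the same route as the paper: the Vorono\"{i}-type expansion of $\widehat{\mathcal{E}}(x;\omega)$ with the difference factor $\sin(\pi\sqrt{m}\,\omega(x))$, a mean-square truncation bound, a diagonal/off-diagonal moment computation in which the Wick constant $\tfrac{j!}{2^{j/2}(j/2)!}$ and the factor $\mathscr{M}_{j}(X;\omega)$ emerge from the paired frequencies, and finally Carleman's criterion plus tightness and the method of moments. The only differences are at the level of tools within the same steps (the paper uses Vaaler's lemma, Hilbert's inequality, Besicovitch's linear-independence lemma and the Hughes--Rudnick lower bound on $|\sum e_{i}\sqrt{m_{i}}|$ where you invoke Parseval and van der Corput), so this is the paper's argument in outline.
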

%
%
%
%
%
%
\begin{rem}
Let $\omega\in\mathbf{\Omega}$. Then $\mathscr{L}_{2}(\omega)=1$ by definition, while by H$\ddot{\text{o}}$lder's inequality $\mathscr{L}_{j}(\omega)\geq1$ for even integers $j>2$, hence $\int_{-\infty}^{\infty}\alpha^{2}\textit{d}\mu_{{\scriptscriptstyle\omega}}(\alpha)=1$ and $\int_{-\infty}^{\infty}\alpha^{j}\textit{d}\mu_{{\scriptscriptstyle\omega}}(\alpha)\geq\frac{j!}{2^{j/2}(j/2)!}$ respectively. The extremal case where equality holds for all even integers $j$ is considered below.
\end{rem}
\subsubsection*{\textbf{The case of slowly varying functions.}} Let $\omega\in\mathbf{\Omega}$ be a slowly varying function, i.e., $\omega$ satisfies for any $\lambda>0$ the relation $\lim_{x\to\infty}\omega(\lambda x)/\omega(x)=1$. We remark that the convergence is uniform over $\lambda$ in compact sets. Relevant examples of slowly varying functions include $\frac{1}{\log{\log{x}}}, \frac{1}{\log{x}}, \frac{1}{\exp{(\sqrt{\log{x}})}}$. Now, let $j$ be a positive integer and observe the following. Given $X>0$, by the mean value theorem for integrals we may find $X\leq x_{{\scriptscriptstyle j}}, x_{{\scriptscriptstyle2}}\leq2X$ such that $\big(\omega(x_{{\scriptscriptstyle j}})\log{\omega(x_{{\scriptscriptstyle j}})}\big)^{j}=\mathscr{M}_{j}(X;\omega)$ and $\big(\omega(x_{{\scriptscriptstyle2}})\log{\omega(x_{{\scriptscriptstyle2}})}\big)^{2}=\mathscr{M}_{2}(X;\omega)$. Note that $\omega(x)\log{\omega(x)}$ is also a slowly varying function since $\omega$ vanishes at infinity. It follows that
\begin{equation*}
\frac{\mathscr{M}_{j}(X;\omega)}{\mathscr{M}^{j/2}_{2}(X;\omega)}=\bigg(\frac{\omega(x_{{\scriptscriptstyle j}})\log{\omega(x_{{\scriptscriptstyle j}})}}{\omega(x_{{\scriptscriptstyle2}})\log{\omega(x_{{\scriptscriptstyle2}})}}\bigg)^{j}\underset{X\to\infty}{\longrightarrow}1\,.
\end{equation*}
In particular, we find that $\mathscr{L}_{j}(\omega)=1$ for all even integers $j$. The RHS of \eqref{eq:1.2} in this case corresponds to the moments of the standard Gau$\mathbf{\ss}$ian measure. According to item \textnormal{(A)} $\mu_{{\scriptscriptstyle\omega}}$ is moment-determinate, and so we conclude that $\textit{d}\mu_{{\scriptscriptstyle\omega}}(\alpha)=\frac{1}{\sqrt{2\pi}}\exp{(-\alpha^{2}/2)}\textit{d}\alpha$. Specializing $\mathcal{F}$ in \eqref{eq:1.1} to be the indicator function of a ray, we deduce the following. 
\begin{cor}
For any slowly varying function $\omega\in\mathbf{\Omega}$ and any $t\in\mathbb{R}$, we have
\begin{equation}\label{eq:1.3}
\lim\limits_{X\to\infty}\frac{1}{X}\textit{meas}\Bigg\{X<x<2X:\frac{\widehat{\mathcal{E}}(x;\omega)}{\sigma(X;\omega)}<t\Bigg\}=\frac{1}{\sqrt{2\pi}}\int\limits_{-\infty}^{t}\exp{\bigg(-\frac{\alpha^{2}}{2}\bigg)}\textit{d}\alpha\,.
\end{equation}
\end{cor}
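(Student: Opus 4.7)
The plan is to identify the limiting measure $\mu_{\omega}$ explicitly as the standard Gau$\mathbf{\ss}$ian and then obtain \eqref{eq:1.3} by specializing \eqref{eq:1.1} of the Main Theorem to the indicator function of the ray $(-\infty, t)$. The existence of the limiting measure, its moment-determinacy, and the formula for its moments are all furnished by the Main Theorem, so what remains is a short calculation showing that $\mathscr{L}_{j}(\omega) = 1$ for every even $j$ in the slowly varying regime, after which the corollary becomes a direct deduction.

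For the moment computation, I would first check that $f(x) := \omega(x)\log\omega(x)$ is itself slowly varying. Since $\omega(x)\to 0$, we have $|\log\omega(x)|\to\infty$, and writing $f(\lambda x)/f(x) = (\omega(\lambda x)/\omega(x))\cdot(\log\omega(\lambda x)/\log\omega(x))$ the first factor tends to $1$ by slow variation of $\omega$, while the second factor tends to $1$ because $\log\omega(\lambda x)-\log\omega(x) = \log(\omega(\lambda x)/\omega(x)) = o(1)$ against a denominator blowing up to infinity. Applying the mean value theorem for integrals to the continuous integrand $f^{j}$ on $[X, 2X]$, there exist $x_{j}, x_{2} \in [X, 2X]$ with $f(x_{j})^{j} = \mathscr{M}_{j}(X;\omega)$ and $f(x_{2})^{2} = \mathscr{M}_{2}(X;\omega)$, so that $\mathscr{M}_{j}/\mathscr{M}_{2}^{j/2} = (f(x_{j})/f(x_{2}))^{j}$. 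Because $x_{j}/x_{2}$ always lies in the compact set $[1/2, 2]$, uniform convergence of slowly varying functions on compacts forces this ratio to $1$ as $X \to \infty$, yielding $\mathscr{L}_{j}(\omega) = 1$ for every even $j$.

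Combined with the vanishing of the odd moments in \eqref{eq:1.2}, the moments of $\mu_{\omega}$ are then $j!/(2^{j/2}(j/2)!)$ for even $j$ and $0$ for odd $j$, i.e., they coincide exactly with the moments of $N(0,1)$. Item \textnormal{(A)} of the Main Theorem asserts that $\mu_{\omega}$ is moment-determinate, so the uniqueness in the Hamburger moment problem forces $\textit{d}\mu_{\omega}(\alpha) = \frac{1}{\sqrt{2\pi}}\exp(-\alpha^{2}/2)\,\textit{d}\alpha$. To finish, I would apply \eqref{eq:1.1} with $\mathcal{F}$ taken to be the indicator function of $(-\infty, t)$: this is bounded and piecewise continuous with a single discontinuity at $\alpha = t$, which is automatically a continuity point of the (absolutely continuous) Gau$\mathbf{\ss}$ian measure $\mu_{\omega}$, so the limit transfers cleanly to give \eqref{eq:1.3}. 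I do not anticipate any substantive obstacle here; the only delicate step is the appeal to uniform convergence of slowly varying functions on compact sets, which is a standard fact in Karamata's theory.
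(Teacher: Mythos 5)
Your proposal is correct and follows essentially the same route as the paper: the paper likewise observes that $\omega(x)\log\omega(x)$ is slowly varying, invokes the mean value theorem for integrals and uniform convergence on compacts to get $\mathscr{L}_{j}(\omega)=1$ for even $j$, identifies $\mu_{\omega}$ as the standard Gau{\ss}ian via moment-determinacy, and specializes $\mathcal{F}$ in \eqref{eq:1.1} to the indicator of a ray. Your write-up even supplies the small verification (that $\log\omega(\lambda x)/\log\omega(x)\to1$) which the paper leaves implicit.
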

\begin{rem}
Hughes and Rudnick \cite{HughesRudnick} have established an analogue of the above corollary for the error term corresponding to the number of integer lattice points in shrinking Euclidean annuli (see also \cite{Wigman1},\cite{Wigman2} for further results).
\end{rem}
\noindent
Our next example concerns the limiting distribution $\mu_{{\scriptscriptstyle\omega}}$ of $\widehat{\mathcal{E}}(x;\omega)/\sigma(X;\omega)$ for non-slowly varying functions $\omega\in\mathbf{\Omega}$ of suitable type.
\subsubsection*{\textbf{Product of a slowly varying function and almost periodic polynomials.}} Let $\varphi_{{\scriptscriptstyle\ell}}(t)=|p_{{\scriptscriptstyle\ell}}(\exp{(2\pi it)})|^{2}$, and $\lambda_{{\scriptscriptstyle\ell}}\in\mathbb{R}$ with $\ell=1,\ldots,n$ be given, where $p_{{\scriptscriptstyle\ell}}$ are polynomials with no roots on the unit circle. Fix an integer $A>1$, and let $\omega_{{\scriptscriptstyle\times}}(x):=\big\{\prod_{\ell=1}^{n}\varphi_{{\scriptscriptstyle\ell}}\big(\lambda_{{\scriptscriptstyle\ell}}(\log{x})^{A}\big)\big\}(\log{x})^{-A}$ and $\omega_{{\scriptscriptstyle+}}(x):=\big\{\sum_{\ell=1}^{n}\varphi_{{\scriptscriptstyle\ell}}\big(\lambda_{{\scriptscriptstyle\ell}}(\log{x})^{A}\big)\big\}(\log{x})^{-A}$. We will show ($\S3$, Lemma $7$) that $\omega_{{\scriptscriptstyle\times}}, \omega_{{\scriptscriptstyle+}}\in\mathbf{\Omega}$. Moreover, denoting by $\Phi_{{\scriptscriptstyle\times}}(\mathbf{t})=\Phi_{{\scriptscriptstyle\times}}((t_{{\scriptscriptstyle1}},\ldots, t_{{\scriptscriptstyle n}}))=\prod_{\ell=1}^{n}\varphi_{\ell}(t_{{\scriptscriptstyle\ell}})$ and $\Theta_{{\scriptscriptstyle+}}(\mathbf{t})=\Theta_{{\scriptscriptstyle+}}((t_{{\scriptscriptstyle1}},\ldots, t_{{\scriptscriptstyle n}}))=\sum_{\ell=1}^{n}\varphi_{\ell}(t_{{\scriptscriptstyle\ell}})$, it will be shown that if $\lambda_{{\scriptscriptstyle1}},\ldots,\lambda_{{\scriptscriptstyle n}}$ are linearly independent over $\mathbb{Z}$, then for even integers $j$ 
\begin{equation*}
\begin{split}
&(\dag)\quad\lim_{X\to\infty}\frac{\mathscr{M}_{j}(X;\omega_{{\scriptscriptstyle\times}})}{\mathscr{M}^{j/2}_{2}(X;\omega_{{\scriptscriptstyle\times}})}=\bigg(\frac{\Vert\Phi_{{\scriptscriptstyle\times}}\Vert_{{\scriptscriptstyle j}}}{\Vert\Phi_{{\scriptscriptstyle\times}}\Vert_{{\scriptscriptstyle2}}}\bigg)^{j}\\
&(\ddag)\quad\lim_{X\to\infty}\frac{\mathscr{M}_{j}(X;\omega_{{\scriptscriptstyle+}})}{\mathscr{M}^{j/2}_{2}(X;\omega_{{\scriptscriptstyle+}})}=\bigg(\frac{\Vert\Theta_{{\scriptscriptstyle+}}\Vert_{{\scriptscriptstyle j}}}{\Vert\Theta_{{\scriptscriptstyle+}}\Vert_{{\scriptscriptstyle2}}}\bigg)^{j}\,,
\end{split}
\end{equation*}
where $\Vert\Phi_{{\scriptscriptstyle\times}}\Vert_{{\scriptscriptstyle j}}=\Big(\int_{\mathbf{t}\in[0,1)^{n}}\Phi^{j}_{{\scriptscriptstyle\times}}(\mathbf{t})\textit{d}\mathbf{t}\Big)^{1/j}$ and $\Vert\Theta_{{\scriptscriptstyle+}}\Vert_{{\scriptscriptstyle j}}=\Big(\int_{\mathbf{t}\in[0,1)^{n}}\Theta^{j}_{{\scriptscriptstyle+}}(\mathbf{t})\textit{d}\mathbf{t}\Big)^{1/j}$ denote the $j$-norm of $\Phi_{{\scriptscriptstyle\times}}$ and $\Theta_{{\scriptscriptstyle+}}$ respectively. From the above, we deduce the following. 
\begin{thm2a}[\textnormal{Multiplicative version}]
Let $\omega_{{\scriptscriptstyle\times}}$ be defined as above. Then the normalized error term $\widehat{\mathcal{E}}(x;\omega_{{\scriptscriptstyle\times}})/\sigma(X;\omega_{{\scriptscriptstyle\times}})$ has a limiting distribution $\mu_{\omega_{{\scriptscriptstyle\times}}}$ as detailed in Theorem $1$. Moreover, if $\lambda_{{\scriptscriptstyle1}},\ldots,\lambda_{{\scriptscriptstyle n}}$ are linearly independent over $\mathbb{Z}$, then $\mu_{\omega_{{\scriptscriptstyle\times}}}$ is absolutely continuous  $\textit{d}\mu_{\omega_{{\scriptscriptstyle\times}}}(\alpha)=\mathcal{P}_{\Phi_{{\scriptscriptstyle\times}}}(\alpha)\textit{d}\alpha$, where the density $\mathcal{P}_{\Phi_{{\scriptscriptstyle\times}}}$ depends only on $\Phi_{{\scriptscriptstyle\times}}$ and is otherwise independent of the $\lambda_{{\scriptscriptstyle\ell}}^{'}\text{s}$ and the exponent $A$, and is given by
\begin{equation}\label{eq:1.4}
\mathcal{P}_{\Phi_{{\scriptscriptstyle\times}}}(\alpha)=\frac{\Vert\Phi_{{\scriptscriptstyle\times}}\Vert_{{\scriptscriptstyle2}}}{\sqrt{2\pi}}\int\limits_{\mathbf{t}\in[0,1)^{n}}\frac{1}{\Phi_{{\scriptscriptstyle\times}}(\mathbf{t})}\exp{\bigg(-\frac{1}{2}\frac{\Vert\Phi_{{\scriptscriptstyle\times}}\Vert^{{\scriptscriptstyle2}}_{{\scriptscriptstyle2}}}{\Phi^{{\scriptscriptstyle2}}_{{\scriptscriptstyle\times}}(\mathbf{t})}\alpha^{2}\bigg)}\textit{d}\mathbf{t}\,.
\end{equation}
\end{thm2a}
\begin{thm2b}[\textnormal{Additive version}]
Let $\omega_{{\scriptscriptstyle+}}$ be defined as above. Then the normalized error term $\widehat{\mathcal{E}}(x;\omega_{{\scriptscriptstyle+}})/\sigma(X;\omega_{{\scriptscriptstyle+}})$ has a limiting distribution $\mu_{\omega_{{\scriptscriptstyle+}}}$ as detailed in Theorem $1$. Moreover, if $\lambda_{{\scriptscriptstyle1}},\ldots,\lambda_{{\scriptscriptstyle n}}$ are linearly independent over $\mathbb{Z}$, then $\mu_{\omega_{{\scriptscriptstyle+}}}$ is absolutely continuous $\textit{d}\mu_{\omega_{{\scriptscriptstyle+}}}(\alpha)=\mathcal{P}_{\Theta_{{\scriptscriptstyle+}}}(\alpha)\textit{d}\alpha$, where the density $\mathcal{P}_{\Theta_{{\scriptscriptstyle+}}}$ depends only on $\Theta_{{\scriptscriptstyle+}}$ and is otherwise independent of the $\lambda_{{\scriptscriptstyle\ell}}^{'}\text{s}$ and the exponent $A$, and is given by
\begin{equation}\label{eq:1.5}
\mathcal{P}_{\Theta_{{\scriptscriptstyle+}}}(\alpha)=\frac{\Vert\Theta_{{\scriptscriptstyle+}}\Vert_{{\scriptscriptstyle2}}}{\sqrt{2\pi}}\int\limits_{\mathbf{t}\in[0,1)^{n}}\frac{1}{\Theta_{{\scriptscriptstyle+}}(\mathbf{t})}\exp{\bigg(-\frac{1}{2}\frac{\Vert\Theta_{{\scriptscriptstyle+}}\Vert^{{\scriptscriptstyle2}}_{{\scriptscriptstyle2}}}{\Theta^{{\scriptscriptstyle2}}_{{\scriptscriptstyle+}}(\mathbf{t})}\alpha^{2}\bigg)}\textit{d}\mathbf{t}\,.
\end{equation}
\end{thm2b}
\begin{rem}
When $\lambda_{{\scriptscriptstyle1}},\ldots,\lambda_{{\scriptscriptstyle n}}$ are linearly dependent over $\mathbb{Z}$, then $\mu_{\omega_{{\scriptscriptstyle\times}}}$, $\mu_{\omega_{{\scriptscriptstyle+}}}$ do depend on the $\lambda_{{\scriptscriptstyle\ell}}^{'}\text{s}$. Moreover, one can show that both distributions are absolutely continuous and obtain generalization of \eqref{eq:1.4} and \eqref{eq:1.5} for the corresponding densities. 
\end{rem}
\begin{rem}
It remains an open problem to determine whether $\mu_{{\scriptscriptstyle\omega}}$ is absolutely continuous whenever $\omega\in\mathbf{\Omega}$.
\end{rem}
%
\subsubsection*{\textbf{Proof outline.}} Our strategy will be to show that $\widehat{\mathcal{E}}(x;\omega)/\sigma(X;\omega)$ has finite moments of any order, and that these moments give rise to a unique probability measure which is the limiting distribution described in Theorem $1$. In $\S2.1$ we show that $\widehat{\mathcal{E}}(x;\omega)$ admits a Vorono\"{i}-type series expansion, and in $\S2.2$ we estimate the moments of the corresponding series (we do this gradually over several lemmas making use of conditions \textbf{(1)}, \textbf{(2)}, \textbf{(3\textnormal{\textbf{a}})} and \textbf{(3\textnormal{\textbf{b}})}). In $\S2.3$ we prove the finiteness of moments by using the results of $\S2.2$ together with conditions \textbf{(4\textnormal{\textbf{a}})} and \textbf{(4\textnormal{\textbf{b}})}. We begin $\S3$ by proving there exists a unique measure satisfying \eqref{eq:1.2} (we use Hamburger's Theorem together with conditions \textbf{(4\textnormal{\textbf{c}})}), and we then proceed to prove the main results.
\subsubsection*{\textbf{Notation and conventions.}} For positive quantities $X,Y$ we use Vinogradov's asymptotic notation $X\ll Y$ or the Big-$O$ notation $X=O(Y)$, to mean that $X\leq cY$ for some absolute constant $c$, and write $X\asymp Y$ if $X\ll Y\ll X$. In case the implied constant depends on some variable, we indicate this by placing a subscript. In addition, we define
\begin{equation*}
\begin{split}
&\textit{(1)}\quad r_{{\scriptscriptstyle2}}(m)=\sum_{a^{2}+b^{2}=m}1\quad;\quad\text{where the representation runs over }a,b\in\mathbb{Z}\,.\\
&\textit{(2)}\quad\mu(m)=\left\{
        \begin{array}{ll}
            1 & ;\, m=1\\\\
            (-1)^{\ell} & ;\, \text{if }m=p_{{\scriptscriptstyle1}}\cdots p_{{\scriptscriptstyle\ell}},\,\text{with } p_{{\scriptscriptstyle1}},\ldots,p_{{\scriptscriptstyle\ell}}\,\text{distinct primes}\\\\
            0 & ;\,\text{otherwise}
        \end{array}
    \right.
\end{split}
\end{equation*}
\section{Power moment estimates}
\noindent
In this section we are going to estimate the moments of $\widehat{\mathcal{E}}(x;\omega)$ (see $\S2.3$, Proposition $2$), and as a corollary we shall obtain the moments of $\widehat{\mathcal{E}}(x;\omega)/\sigma(X;\omega)$ (see $\S2.3$, Proposition $3$).
\subsection{A Vorono\"{i}-type series expansion for $\widehat{\mathcal{E}}(x;\omega)$}
\noindent
\begin{prop}
Let $\omega\in\mathbf{\Omega}$, and let $X>0$ be large. Then for $X<x<2X$ we have
\begin{equation}\label{eq:2.1}
\begin{split}
\widehat{\mathcal{E}}(x;\omega)&=\frac{2^{3/2}}{\pi}\sum_{m\,\leq\,X^{2}}\frac{r_{{\scriptscriptstyle 2}}(m)}{m}\sin{\big(\pi\sqrt{m}\omega(x)\big)}\sin{\big(\pi\sqrt{m}(2x+\omega(x))\big)}-2x^{-2}\Xi_{\psi}(x;\omega)\\
&\,\,\,\,\,+O_{\epsilon}(X^{-1+\epsilon})\,,
\end{split}
\end{equation}
for any $\epsilon>0$, where $\Xi_{\psi}(x;\omega)$ is given by
\begin{equation}\label{eq:2.2}
\Xi_{\psi}(x;\omega)=\sum_{m\,\leq\,(x+\omega(x))^{2}}r_{{\scriptscriptstyle2}}(m)\psi\Big(\big((x+\omega(x))^{4}-m^{2}\big)^{1/2}\Big)-\sum_{m\,\leq\,x^{2}}r_{{\scriptscriptstyle2}}(m)\psi\Big(\big(x^{4}-m^{2}\big)^{1/2}\Big)\,,
\end{equation}
and $\psi=t-[t]-1/2$, with $[t]=\text{max}\{m\in\mathbb{Z}: m\leq t\}$, is the sawtooth function.
\end{prop}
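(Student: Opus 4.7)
Plan. The starting point is the exact identity
$$N(x)=2\sum_{0\leq k\leq x^{2}}r_{2}(k)\sqrt{x^{4}-k^{2}}-2\sum_{0\leq k\leq x^{2}}r_{2}(k)\psi\bigl(\sqrt{x^{4}-k^{2}}\bigr),$$
obtained by writing $N(x)=\sum_{k}r_{2}(k)\bigl(2\lfloor\sqrt{x^{4}-k^{2}}\rfloor+1\bigr)$ and substituting $2\lfloor t\rfloor+1=2t-2\psi(t)$. Differencing at $x+\omega(x)$ and $x$ and subtracting the shell volume reduces everything to
$$\mathcal{E}(x;\omega)=2\Delta(x;\omega)-2\Xi_{\psi}(x;\omega),\qquad \Delta(x;\omega)=A(x+\omega)-A(x)-\tfrac{\textit{vol}(\mathcal{B})}{2}\bigl[(x+\omega)^{4}-x^{4}\bigr],$$
where $A(y)=\sum_{0\leq k\leq y^{2}}r_{2}(k)\sqrt{y^{4}-k^{2}}$. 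The $\psi$-pieces already account for $\Xi_{\psi}$.

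To expand $\Delta$, I would apply Poisson summation in $\mathbb{Z}^{2}$ to the radial function $G_{y}(\xi)=\sqrt{y^{4}-|\xi|^{4}}\cdot\mathds{1}_{|\xi|\leq y}$. The frequency $(0,0)$ yields exactly $\tfrac{\textit{vol}(\mathcal{B})}{2}y^{4}$, while the nonzero frequencies, grouped by $m=p^{2}+q^{2}$ and passed through Hankel inversion (the 2D Fourier transform of a radial function), produce
$$A(y)-\tfrac{\textit{vol}(\mathcal{B})}{2}y^{4}=\sum_{m\geq 1}r_{2}(m)\,\hat{G}_{y,m},\qquad \hat{G}_{y,m}=\pi\int_{0}^{y^{2}}\sqrt{y^{4}-u^{2}}\,J_{0}\bigl(2\pi\sqrt{mu}\bigr)\,du,$$
after substituting $u=r^{2}$.

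The crucial asymptotic analysis is of $\hat{G}_{y,m}$. Using $J_{0}(z)=\sqrt{2/(\pi z)}\cos(z-\pi/4)+O(z^{-3/2})$, the phase $\phi(u)=2\pi\sqrt{mu}$ has $\phi'(u)>0$ throughout $(0,y^{2}]$, so there is no interior stationary point and the leading behavior comes from the endpoint $u=y^{2}$ where the amplitude has a square-root vanishing. Substituting $v=y^{2}-u$ and using the Fresnel-type identity $\int_{0}^{\infty}v^{1/2}e^{-i\alpha v}\,dv=\tfrac{\sqrt\pi}{2}\alpha^{-3/2}e^{-3i\pi/4}$ (with $\alpha=\pi\sqrt{m}/y$) yields
$$\hat{G}_{y,m}=-\frac{y^{2}}{\pi\sqrt{2}\,m}\cos(2\pi\sqrt{m}\,y)+O_{\epsilon}\bigl(m^{-3/2}y^{2+\epsilon}\bigr).$$
Inserting this into $\hat{G}_{x+\omega,m}-\hat{G}_{x,m}$, absorbing $(x+\omega)^{2}=x^{2}+O(X\omega)$ into the error, and applying $\cos A-\cos B=-2\sin\tfrac{A+B}{2}\sin\tfrac{A-B}{2}$ with $A=2\pi\sqrt{m}(x+\omega)$, $B=2\pi\sqrt{m}\,x$, gives
$$\hat{G}_{x+\omega,m}-\hat{G}_{x,m}=\frac{\sqrt{2}\,x^{2}}{\pi m}\sin\bigl(\pi\sqrt{m}\,\omega\bigr)\sin\bigl(\pi\sqrt{m}\,(2x+\omega)\bigr)+R_{m},$$
and multiplying by $2r_{2}(m)/x^{2}$ and summing produces the coefficient $\tfrac{2\sqrt{2}}{\pi}=\tfrac{2^{3/2}}{\pi}$.

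To finish I must show that (i) $\sum_{m}r_{2}(m)R_{m}$ contributes $O_{\epsilon}(X^{-1+\epsilon})$ after division by $x^{2}$, and (ii) truncation of the principal series at $m\leq X^{2}$ costs only $O_{\epsilon}(X^{-1+\epsilon})$. Item (i) follows from absolute summation of the $m^{-3/2}$ remainders and from controlling $\sum_{m}(r_{2}(m)/m)\cos(2\pi\sqrt{m}\,y)$ via the ball Vorono\"i expansion of \cite{3Dgathdistribution,gath2017best} (which in particular gives $\widehat{\mathcal{E}}(y)\ll_{\epsilon}y^{\epsilon}$). For (ii) I would combine Abel summation with the classical exponential-sum estimate $\sum_{m\leq Y}r_{2}(m)\cos(2\pi\sqrt{m}\,y)\ll_{\epsilon}Y^{1/2+\epsilon}y^{\epsilon}$, uniformly for $y\asymp X$; integration by parts then yields a tail bound $O_{\epsilon}(X^{-1+\epsilon})$. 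The main obstacle is precisely this pair of technical estimates: the uniform-in-$m$ endpoint analysis of $\hat{G}_{y,m}$ and the tail bound for the principal series, since the naive termwise bound $|\sin\,\sin|\leq 1$ only yields $O(\log X)$ and so is far from sufficient.
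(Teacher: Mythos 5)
Your route is genuinely different from the paper's. The paper does not re-derive the Vorono\"{i} expansion at all: it quotes the expansion \eqref{eq:2.3} for the ball error term $\widehat{\mathcal{E}}(y)$ from \cite{3Dgathdistribution}, writes $\mathcal{E}(x;\omega)=\mathcal{E}(x+\omega(x))-\mathcal{E}(x)$, absorbs the factor $\big(1+\omega(x)/x\big)^{2}$ into the error using the trivial bound $\widehat{\mathcal{E}}(y)\ll_{\epsilon}X^{\epsilon}$ read off from \eqref{eq:2.3}, and then applies $\cos A-\cos B=-2\sin\frac{A-B}{2}\sin\frac{A+B}{2}$ to the difference of the two truncated cosine series; both the truncation at $m\leq X^{2}$ and the $O_{\epsilon}(X^{-1+\epsilon})$ error are inherited from the cited result. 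What you propose is essentially a from-scratch proof of that cited expansion (exact counting identity, Poisson summation of a radial cutoff, endpoint analysis of the Hankel integral), followed by the same differencing and product-to-sum step. Your leading-order constants are all correct: the endpoint computation does give $\hat{G}_{y,m}\sim-\frac{y^{2}}{\sqrt{2}\pi m}\cos(2\pi\sqrt{m}y)$, consistent with \eqref{eq:2.3}, and the trigonometric identity does produce the coefficient $2^{3/2}/\pi$.

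There is, however, a concrete quantitative gap. The remainder you assert, $\hat{G}_{y,m}=-\frac{y^{2}}{\pi\sqrt{2}\,m}\cos(2\pi\sqrt{m}y)+O_{\epsilon}\big(m^{-3/2}y^{2+\epsilon}\big)$, is not strong enough: after multiplying by $2r_{2}(m)/x^{2}$ and summing over $m$, these remainders contribute $\ll_{\epsilon}y^{2+\epsilon}x^{-2}\sum_{m}r_{2}(m)m^{-3/2}\ll_{\epsilon}X^{\epsilon}$, which exceeds the target $O_{\epsilon}(X^{-1+\epsilon})$ by a full factor of $X$ and is in fact as large as the main term. So ``absolute summation of the $m^{-3/2}$ remainders'' does not close your item (i). To make the argument work you must push the endpoint expansion one order further: the secondary terms are of relative size $(\sqrt{m}\,y)^{-1}$, so one needs a remainder of the shape $O\big(m^{-3/2}y^{1+\epsilon}\big)$ (or an explicit oscillatory second term of that size plus a genuinely smaller error), and one must also justify Poisson summation for the cutoff $\sqrt{y^{4}-|\xi|^{4}}\,\mathds{1}_{|\xi|\leq y}$, whose dual sum is only conditionally convergent. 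None of this is insurmountable --- it is essentially what is carried out in \cite{3Dgathdistribution} --- but as written your sketch does not reach the stated error term, whereas citing \eqref{eq:2.3} and differencing, as the paper does, sidesteps the issue entirely.
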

\noindent
For the proof we need the following result on the Vorono\"{i}-type series expansion for $\mathcal{E}_{1}(x)/x^{2}$ (see \cite{3Dgathdistribution}, $\S2$ Proposition $1$).
\begin{propp}[\cite{3Dgathdistribution}]
Let $X>0$ be large. Then on setting $\widehat{\mathcal{E}}(x)=\mathcal{E}(x)/x^{2}$, we have for $x\asymp X$
\begin{equation}\label{eq:2.3}
\begin{split}
\widehat{\mathcal{E}}(x)&=-\frac{\sqrt{2}}{\pi}\sum_{m\,\leq\,X^{2}}\frac{r_{{\scriptscriptstyle 2}}(m)}{m}\cos{\big(2\pi\sqrt{m}x\big)}-2x^{-2}\sum_{m\,\leq\,x^{2}}r_{{\scriptscriptstyle2}}(m)\psi\Big(\big(x^{4}-m^{2}\big)^{1/2}\Big)\\
&\,\,\,\,\,+O_{\epsilon}\big(X^{-1+\epsilon}\big)\,.
\end{split}
\end{equation}
\end{propp}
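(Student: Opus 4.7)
The plan is to derive the Vorono\"{i} expansion by slicing the lattice point count along the third (Heisenberg) coordinate, reducing the first two coordinates to a Gauss-circle problem, and then invoking the Hardy--Vorono\"{i} expansion of the Gauss circle error together with a boundary stationary phase analysis. More precisely: the condition $(a,b,c)\in\mathbb{Z}^3\cap\delta_x\mathcal{B}$ is $(a^2+b^2)^2+c^2\le x^4$, so fibering over $(a,b)$ and applying the identity $\#\{c\in\mathbb{Z}:|c|\le Y\}=2Y-2\psi(Y)$ gives
$$N(x)=2\sum_{m\le x^2}r_{2}(m)\sqrt{x^4-m^2}\;-\;2\sum_{m\le x^2}r_{2}(m)\,\psi\!\left(\sqrt{x^4-m^2}\right).$$
The second sum is already the sawtooth term in \eqref{eq:2.3} verbatim; the remaining task is to expand $S(x):=2\sum_{m\le x^2}r_{2}(m)\sqrt{x^4-m^2}$ as $\mathrm{vol}(\mathcal{B})\,x^{4}$ plus the oscillatory cosine series.

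Writing $R(t)=\sum_{m\le t}r_{2}(m)=\pi t+E(t)$ and performing Abel summation yields
$$S(x)=2\!\int_0^{x^2}\!\!\frac{t\,R(t)}{\sqrt{x^4-t^2}}\,dt=2\pi\!\int_0^{x^2}\!\!\frac{t^{2}}{\sqrt{x^4-t^2}}\,dt+2\!\int_0^{x^2}\!\!\frac{t\,E(t)}{\sqrt{x^4-t^2}}\,dt.$$
The substitution $t=x^2\sin\varphi$ evaluates the first integral as $\pi^2 x^4/4$, so the main term equals $\pi^2 x^4/2=\mathrm{vol}(\mathcal{B})\,x^{4}$ (an independent computation in polar coordinates confirms $\mathrm{vol}(\mathcal{B})=\pi^2/2$). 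For the residual integral, insert the Hardy--Vorono\"{i} identity $E(t)=\sum_{m\ge1}r_{2}(m)(t/m)^{1/2}J_{1}(2\pi\sqrt{mt})$ and interchange sum and integral; this interchange together with a subsequent truncation at $m\asymp X^{2}$ is legitimate via standard Bessel-tail bounds combined with $r_{2}(m)\ll_\epsilon m^\epsilon$.

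For each $m$ one is thereby reduced to $I_m(x):=\int_0^{x^2}\frac{t^{3/2}}{\sqrt{x^4-t^2}}\,J_{1}(2\pi\sqrt{mt})\,dt$. Substituting $t=x^2\sin\varphi$ and inserting the large-argument asymptotic $J_{1}(y)=\sqrt{2/(\pi y)}\cos(y-3\pi/4)+O(y^{-3/2})$ converts $I_m(x)$ into an oscillatory integral on $[0,\pi/2]$ with phase $2\pi x\sqrt{m\sin\varphi}$, whose unique critical point is the right endpoint $\varphi=\pi/2$. A boundary stationary phase expansion (Taylor the phase to order two near $\varphi=\pi/2$, apply the Fresnel formula $\int_{-\infty}^{\infty}e^{-ia\tau^2}d\tau=\sqrt{\pi/a}\,e^{-i\pi/4}$, and integrate by parts in the complementary range) yields
$$I_m(x)=-\,\frac{x^{2}}{\sqrt{2}\,\pi\,m^{1/2}}\cos(2\pi\sqrt{m}\,x)+\text{(lower order in $x$, $m$)},$$
so that $2\sum_m r_{2}(m)\,m^{-1/2}\,I_m(x)$ produces $-\tfrac{\sqrt{2}\,x^{2}}{\pi}\sum_{m}r_{2}(m)m^{-1}\cos(2\pi\sqrt{m}\,x)$. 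Truncating the cosine series at $m\le X^{2}$, controlling the high-frequency tail, and accumulating the lower-order errors contributes $O_\epsilon(x^{1+\epsilon})$; dividing the assembled identity by $x^{2}$ then gives \eqref{eq:2.3} with remainder $O_\epsilon(X^{-1+\epsilon})$.

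The hard part is the stationary phase step: the amplitude $(x^4-t^2)^{-1/2}$ has an integrable square-root singularity at $t=x^2$ that coincides exactly with the stationary point of the phase, so the standard interior stationary phase formula is not directly applicable. The substitution $t=x^{2}\sin\varphi$ desingularises the integrand but turns the critical point into a boundary one, and one must control not only the leading contribution but also the next correction and the Bessel remainder uniformly in $m$ and $x$ in order to achieve the sharp $O_\epsilon(X^{-1+\epsilon})$ error. The sum-integral interchange together with the truncation at $m\le X^{2}$ likewise requires careful estimates on weighted exponential sums of the form $\sum_{M<m\le 2M}r_{2}(m)\,e(\alpha\sqrt{m})$ (via van der Corput / Weyl-type bounds in the Gauss-circle setting) in order to absorb the very high frequencies into the declared remainder.
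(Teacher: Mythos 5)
The paper does not prove this proposition; it is imported verbatim from \cite{3Dgathdistribution} (Proposition 1 of \S2 there), so there is no in-paper argument to compare against. Judged on its own terms, your skeleton is the natural one and is almost certainly the one used in the cited source for the sawtooth term: fibering over $(a,b)$ with $a^{2}+b^{2}=m$ and using $\#\{c:|c|\le Y\}=2Y-2\psi(Y)$ produces the $-2\sum_{m\le x^{2}}r_{2}(m)\psi((x^{4}-m^{2})^{1/2})$ term exactly, Abel summation correctly extracts $\textit{vol}(\mathcal{B})x^{4}=\pi^{2}x^{4}/2$, and your boundary stationary-phase computation of $I_{m}(x)$ does reproduce the coefficient $-\sqrt{2}/\pi$ (I checked the constant: the half-Fresnel factor at $\varphi=\pi/2$ combined with $J_{1}$'s phase $-3\pi/4$ gives $\cos(2\pi\sqrt{m}x-\pi)$, as you claim).

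There are, however, two places where the argument as written does not reach the stated conclusion. First, the interchange of the Hardy--Vorono\"{i} series with the integral cannot be justified by absolute convergence: your own leading-order bound $|I_{m}(x)|\ll x^{2}m^{-1/2}$ makes the putative majorant $x^{2}\sum_{m}r_{2}(m)/m$, which diverges. One must instead work with a \emph{truncated} Vorono\"{i} formula for $E(t)$ with an explicit remainder (or smooth the sum before applying Poisson summation), and the truncation error then has to be beaten down to the target size --- this is not a cosmetic point, it is where the $m\le X^{2}$ cutoff in \eqref{eq:2.3} actually comes from. Second, the error accounting falls short of $O_{\epsilon}(X^{-1+\epsilon})$: treating the $O(y^{-3/2})$ Bessel remainder by absolute values contributes $\sum_{m}r_{2}(m)m^{-1/2}\cdot x^{3/2}m^{-3/4}\ll x^{3/2}$, i.e.\ only $O(x^{-1/2})$ after dividing by $x^{2}$. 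To reach $X^{-1+\epsilon}$ you must exploit that this remainder is itself oscillatory with the same phase (take the next term of the $J_{1}$ asymptotic and the next term of the boundary stationary-phase expansion, both of which again exhibit square-root cancellation), which you mention as a difficulty but do not carry out. Since every subsequent estimate in \S2 of the paper quotes the $O_{\epsilon}(X^{-1+\epsilon})$ bound, these two steps are the substance of the proof rather than routine clean-up, and the proposal should be regarded as a correct plan with its hardest quantitative steps left open.
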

\noindent
We can now present the proof of Proposition $1$.
\begin{proof}(Proposition $1$). Let $\omega\in\mathbf{\Omega}$, let $X>0$ be large, and let $X<x<2X$. By the definition of $\mathcal{E}(x;\omega)$ we have
\begin{equation}\label{eq:2.4}
\begin{split}
\mathcal{E}(x;\omega)&=N(x;\omega)-\textit{vol}(\mathcal{B})\sum_{j=1}^{4}\binom{4}{j}x^{4-j}\omega^{j}(x)\\
&=N(x+\omega(x))-N(x)-\big(\textit{vol}(\mathcal{B})(x+\omega(x))^{4}-\textit{vol}(\mathcal{B})x^{4}\big)\\
&=\mathcal{E}(x+\omega(x))-\mathcal{E}(x)\,.
\end{split}
\end{equation}
Since $\widehat{\mathcal{E}}(x;\omega)=\mathcal{E}(x;\omega)/x^{2}$, it follows from \eqref{eq:2.4} that
\begin{equation}\label{eq:2.5}
\begin{split}
\widehat{\mathcal{E}}(x;\omega)&=\big(1+\omega(x)/x\big)^{2}\widehat{\mathcal{E}}(x+\omega(x))-\widehat{\mathcal{E}}(x)\\
&=\widehat{\mathcal{E}}(x+\omega(x))-\widehat{\mathcal{E}}(x)+O\big(x^{-1}|\widehat{\mathcal{E}}(x+\omega(x))|\big)\\
&=\widehat{\mathcal{E}}(x+\omega(x))-\widehat{\mathcal{E}}(x)+O_{\epsilon}\big(X^{-1+\epsilon}\big)\,,
\end{split}
\end{equation}
where the term $O_{\epsilon}\big(X^{-1+\epsilon}\big)$ in the last line above comes from estimating the RHS of \eqref{eq:2.3} trivially. Now, for $X<x<2X$ we have $X<x, x+\omega(x)<3X$, and so we may appeal to \eqref{eq:2.3} obtaining 
\begin{equation}\label{eq:2.6}
\begin{split}
\widehat{\mathcal{E}}(x+\omega(x))-\widehat{\mathcal{E}}(x)
&=-\frac{2}{\pi}\sum_{m\,\leq\,X^{2}}\frac{r_{{\scriptscriptstyle 2}}(m)}{m}\Big\{\cos{\big(2\pi\sqrt{m}(x+\omega(x))\big)}-\cos{\big(2\pi\sqrt{m}x\big)}\Big\}-2x^{-2}\Xi_{\psi}(x;\omega)\\
&+O_{\epsilon}\bigg(X^{-1+\epsilon}+x^{-3}\sum_{m\,\leq\,(x+\omega(x))^{2}}r_{{\scriptscriptstyle2}}(m)\bigg)\\
&=\frac{2^{3/2}}{\pi}\sum_{m\,\leq\,X^{2}}\frac{r_{{\scriptscriptstyle 2}}(m)}{m}\sin{\big(\pi\sqrt{m}\omega(x)\big)}\sin{\big(\pi\sqrt{m}(2x+\omega(x))\big)}-2x^{-2}\Xi_{\psi}(x;\omega)\\
&\,\,\,\,\,+O_{\epsilon}\big(X^{-1+\epsilon}\big)\,.
\end{split}
\end{equation}
Inserting \eqref{eq:2.6} into the RHS of \eqref{eq:2.5} we arrive at
\begin{equation}\label{eq:2.7}
\begin{split}
\widehat{\mathcal{E}}(x;\omega)&=\frac{2^{3/2}}{\pi}\sum_{m\,\leq\,X^{2}}\frac{r_{{\scriptscriptstyle 2}}(m)}{m}\sin{\big(\pi\sqrt{m}\omega(x)\big)}\sin{\big(\pi\sqrt{m}(2x+\omega(x))\big)}-2x^{-2}\Xi_{\psi}(x;\omega)\\
&\,\,\,\,\,+O_{\epsilon}(X^{-1+\epsilon})\,.
\end{split}
\end{equation}
This concludes the proof.
\end{proof}
\subsection{Auxiliary estimates}
In this subsection we establish several mean-value estimates that will be used in the proof of Proposition $2$ (see $\S2.3$). We begin with the following lemma showing that $\Xi_{\psi}(x;\omega)$ defined in \eqref{eq:2.2} exhibits near square-root cancellation on average.
\begin{lemma}
Let $\omega\in\mathbf{\Omega}$. Then we have
\begin{equation}\label{eq:2.8}
\frac{1}{X}\int\limits_{X}^{2X}\Xi^{2}_{\psi}(x;\omega)\textit{d}x\ll X^{2}\big(\log{X}\big)^{4}\,.
\end{equation}
\end{lemma}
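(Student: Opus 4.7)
My plan is to exploit the difference structure of $\Xi_\psi(x;\omega) = \Psi(x+\omega(x)) - \Psi(x)$, where $\Psi(y) := \sum_{m \leq y^2} r_2(m)\psi(\sqrt{y^4-m^2})$, together with the fact that $\omega(x) \to 0$. A naive triangle inequality is far from enough: each $\Psi(y)$ is of order $y^2$ on average (as is visible from the Voronoi formula \eqref{eq:2.3} combined with the bound on the trigonometric sum there), so $\int_{X}^{3X}\Psi^2 \asymp X^5$, whereas we need $\int_X^{2X} \Xi_\psi^2 \ll X^3(\log X)^4$. The required $X^2$-saving must come from cancellation in the difference, powered by the smallness of $\omega$.

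First, I would split $\Xi_\psi$ into an \emph{annulus} part (lattice points $m$ with $x^2 < m \leq (x+\omega(x))^2$) and a \emph{bulk} part ($m \leq x^2$). The annulus contribution is handled crudely using $|\psi|\leq 1/2$ together with the classical estimate $\sum_{x^2 < m \leq (x+\omega(x))^2} r_2(m) \ll x\,\omega(x) + x^{2/3}$; its mean square is easily dominated by the target, using condition \textbf{(1)} ($\omega \to 0$).

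Next, for the bulk contribution I would apply Vaaler's lemma at height $H = X^C$ (with $C$ large enough that the Vaaler remainder contributes negligibly after squaring), writing $\psi(t) = \sum_{0 < |h| \leq H} \alpha_h e(ht) + R_H(t)$ with $|\alpha_h|\ll 1/|h|$. A first-order Taylor expansion then exposes the crucial factor $\omega(x)$:
\begin{equation*}
e\big(h\sqrt{(x+\omega(x))^4-m^2}\big) - e\big(h\sqrt{x^4-m^2}\big) = 2\pi i h\,\omega(x)\,\frac{2x^3}{\sqrt{x^4-m^2}}\,e\big(h\sqrt{x^4-m^2}\big) + \text{(smaller)}\,.
\end{equation*}
Squaring and integrating over $x \in [X, 2X]$, the diagonal contribution (from $h = h'$, $(a,b) = (a',b')$) is bounded using $|\alpha_h h|\ll 1$, the divisor-type estimate $\sum_m r_2(m)^2(x^4-m^2)^{-1} \ll (\log X)^2 X^{-2}$ (obtained by dyadically decomposing according to $x^2 - m$), and the trivial $\omega(x) \ll 1$, yielding the expected $X^3(\log X)^{O(1)}$ term.

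The main obstacle will be the off-diagonal analysis. The oscillatory integrals
\begin{equation*}
\int_{X}^{2X} e\big(h\sqrt{x^4-(a^2+b^2)^2} \pm h'\sqrt{x^4-(a'{}^2+b'{}^2)^2}\big)\,\omega^2(x)\,(\text{slow factors})\,dx
\end{equation*}
have phases that can become nearly stationary when $\sqrt{m/m'}$ is close to $h/h'$, and a careful dyadic subdivision over the ranges of $m$ and $h$ is required, with separate treatment near the boundary $m \approx x^2$ (where the amplitude $1/\sqrt{x^4-m^2}$ is largest). Van der Corput's second-derivative test provides the needed cancellation, and conditions \textbf{(2)}, \textbf{(3a)}, \textbf{(3b)} on $\omega$ are used exactly to control the bad set of $x$ where $\omega^{(1)}$ or $\omega^{(2)}$ vanish, ensuring that the stationary-phase points do not accumulate. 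Summing the diagonal and off-diagonal estimates produces the bound $\ll X^3(\log X)^4$, and division by $X$ gives \eqref{eq:2.8}.
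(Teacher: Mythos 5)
Your proposal rests on a false premise, and the machinery you build on top of it would not work. You assert that each individual $\psi$-sum $\Psi(x)=\sum_{m\leq x^{2}}r_{2}(m)\psi\big((x^{4}-m^{2})^{1/2}\big)$ has mean square $\asymp X^{4}$ (so that $\int_{X}^{3X}\Psi^{2}\asymp X^{5}$) and that the required saving must therefore come from cancellation in the difference $\Xi_{\psi}=\Psi(x+\omega(x))-\Psi(x)$. But the Vorono\"{i} formula \eqref{eq:2.3} only gives the \emph{upper} bound $\Psi(x)\ll x^{2}$ in mean square (both $\widehat{\mathcal{E}}(x)$ and the trigonometric sum are $O(1)$ in $L^{2}$, and they may — and in fact do — nearly cancel). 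The truth is that $\Psi$ is a sum of $\asymp x^{2}$ bounded oscillating terms exhibiting near square-root cancellation on average, so each of the two pieces \emph{separately} satisfies $\frac{1}{X}\int_{X}^{2X}\Psi^{2}\ll X^{2}(\log X)^{4}$. This is exactly how the paper proceeds: it applies Vaaler's lemma at height $H=X$ (not $X^{C}$), bounds $|\Xi_{\psi}|$ by the triangle inequality in terms of the exponential sums $W_{h}(x^{2})$ and $W_{h}((x+\omega(x))^{2})$, and computes the mean square of each by a change of variables plus a first-derivative (monotone phase) estimate for the off-diagonal terms. Neither $\omega\to0$ nor conditions \textbf{(3a)}, \textbf{(3b)} are used anywhere in this lemma; condition \textbf{(2)} enters only to make the substitution $x\mapsto(x+\omega(x))^{4}$ monotonic.

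Beyond the premise, two steps of your plan fail concretely. First, the linearization $e\big(h\sqrt{(x+\omega(x))^{4}-m^{2}}\big)-e\big(h\sqrt{x^{4}-m^{2}}\big)\approx 2\pi ih\,\omega(x)\,\frac{2x^{3}}{\sqrt{x^{4}-m^{2}}}\,e\big(h\sqrt{x^{4}-m^{2}}\big)$ requires the phase increment $h\omega(x)x^{3}(x^{4}-m^{2})^{-1/2}\gg h\,\omega(x)\,x$ to be $\ll1$; since $\omega$ is only assumed to vanish at infinity (e.g.\ $\omega(x)=1/\log\log x$), this quantity is enormous for every $h\geq1$, so the exponentials do not nearly cancel and no factor $\omega(x)$ is "exposed". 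Second, even taken formally, your diagonal bound diverges: the factor $h$ produced by differentiation exactly cancels the decay $|\alpha_{h}|\ll1/|h|$ of Vaaler's coefficients, so $\sum_{0<|h|\leq X^{C}}|\alpha_{h}h|^{2}$ contributes a factor $\asymp X^{C}$, destroying the estimate for any large $C$. The lesson is that the difference structure should be discarded, not exploited: bound each piece on its own.
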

\noindent
For the proof of Lemma $1$ we need the following result of Valler on series approximation for the $\psi$-function.
\begin{VL}[\cite{vaaler1985some}]
Let $H\geq1$. Then there exist trigonometrical polynomials
\begin{equation*}
\begin{split}
\psi_{{\scriptscriptstyle H}}(t)=\sum_{h\,\leq\,H}\nu(h)\sin{(2\pi ht)}\quad;\quad\psi^{\ast}_{{\scriptscriptstyle H}}(t)=\sum_{h\,\leq\,H}\nu^{\ast}(h)\cos{(2\pi ht)}\,,
\end{split}
\end{equation*}
with real coefficients satisfying $|\nu(h)|,\,|\nu^{\ast}(h)|\ll 1/h$, such that
\begin{equation*}
\big|\psi(t)-\psi_{{\scriptscriptstyle H}}(t)\big|\leq\psi^{\ast}_{{\scriptscriptstyle H}}(t)+\frac{1}{2[H]+2}\,.
\end{equation*}
\end{VL}
\begin{proof}(Lemma 1). 
Let $X>0$ be large. Let us define for $y>0$ the weighted exponential sum
\begin{equation*}
W_{h}(y)=\sum_{m\,\leq\,y}r_{{\scriptscriptstyle2}}(m)\exp{\Big(2\pi ih\big(y^{2}-m^{2}\big)^{1/2}\Big)}\,.
\end{equation*}
By Vaaler's Lemma with $H=X$, we have for $x$ in the range $X<x<2X$
\begin{equation}\label{eq:2.9}
|\Xi_{\psi}(x;\omega)|\ll\sum_{h\,\leq\,X}\frac{1}{h}\Big\{\big|W_{h}\big(x^{2}\big)\big|+\big|W_{h}\big((x+\omega(x))^{2}\big)\big|\Big\}+X\,.
\end{equation}
Applying Cauchy–Schwarz inequality we obtain
\begin{equation}\label{eq:2.10}
\Xi^{2}_{\psi}(x;\omega)\ll\big(\log{X}\big)\sum_{1\,\leq\,h\,\leq\,X}\frac{1}{h}\Big\{\big|W_{h}\big(x^{2}\big)\big|^{2}+\big|W_{h}\big((x+\omega(x))^{2}\big)\big|^{2}\Big\}+X^{2}\,.
\end{equation}
Integrating over the prescribed range, we find that
\begin{equation}\label{eq:2.11}
\frac{1}{X}\int\limits_{X}^{2X}\Xi^{2}_{\psi}(x;\omega)\textit{d}x\ll\big(\log{X}\big)\sum_{h\,\leq\,X}\frac{1}{h}\frac{1}{X}\int\limits_{X}^{2X}\Big\{\big|W_{h}\big(x^{2}\big)\big|^{2}+\big|W_{h}\big((x+\omega(x))^{2}\big)\big|^{2}\Big\}\textit{d}x+X^{2}\,.
\end{equation}
%
Let us fix an integer $h\leq X$. A change of variable gives
\begin{equation}\label{eq:2.12}
\frac{1}{X}\int\limits_{X}^{2X}\big|W_{h}\big(x^{2}\big)\big|^{2}\textit{d}x\ll\frac{1}{X^{4}}\int\limits_{X^{4}}^{16X^{4}}\big|W_{h}\big(\sqrt{x}\,\big)\big|^{2}\textit{d}x\,,
\end{equation}
%
and since $\omega$ satisfies condition \textbf{(2)} in the definition of $\mathbf{\Omega}$, we also have 
\begin{equation}\label{eq:2.13}
\begin{split}
\frac{1}{X}\int\limits_{X}^{2X}\big|W_{h}\big((x+\omega(x))^{2}\big)\big|^{2}\textit{d}x&\ll\frac{1}{X^{4}}\int\limits_{X}^{2X}\big|W_{h}\big((x+\omega(x))^{2}\big)\big|^{2}(x+\omega(x))^{3}(1+\omega^{{\scriptscriptstyle(1)}}(x))\textit{d}x\\
&\ll\frac{1}{X^{4}}\int\limits_{(X+\omega(X))^{4}}^{(2X+\omega(2X))^{4}}\big|W_{h}\big(\sqrt{x}\,\big)\big|^{2}\textit{d}x\ll\frac{1}{X^{4}}\int\limits_{X^{4}}^{81X^{4}}\big|W_{h}\big(\sqrt{x}\,\big)\big|^{2}\textit{d}x
\end{split}
\end{equation}
\label{eq:2.15}It follows from \eqref{eq:2.12} and \eqref{eq:2.13} that
\begin{equation}\label{eq:2.14}
\begin{split}
\frac{1}{X}\int\limits_{X}^{2X}\Big\{\big|W_{h}\big(x^{2}\big)\big|^{2}+\big|W_{h}\big((x+\omega(x))^{2}\big)\big|^{2}\Big\}\textit{d}x&\ll\frac{1}{X^{4}}\int\limits_{X^{4}}^{81X^{4}}\big|W_{h}\big(\sqrt{x}\,\big)\big|^{2}\textit{d}x\\
&=\sum_{m,\ell\,\leq9X^{2}}r_{{\scriptscriptstyle2}}(m)r_{{\scriptscriptstyle2}}(\ell)\mathscr{I}_{h,m,\ell}(X)\,,
\end{split}
\end{equation}
%
where $\mathscr{I}_{h,m,\ell}(X)$ is given by
\begin{equation}\label{eq:2.15}
\begin{split}
\mathscr{I}_{h,m,\ell}(X)&=\frac{1}{X^{4}}\int\limits_{\text{max}\big\{m^{2},\,\ell^{2},\,X^{4}\big\}}^{81X^{4}}\exp{\Big(2\pi ih\Big\{\big(x-m^{2}\big)^{1/2}-\big(x-\ell^{2}\big)^{1/2}\Big\}\Big)}\textit{d}x\\
&\ll\left\{
        \begin{array}{ll}
            1 & ;\,m=\ell 
            \\\\
            X^{2}h^{-1}|m^{2}-\ell^{2}|^{-1} & ;\, m\neq\ell\,.
        \end{array}
    \right.
\end{split}
\end{equation}
Inserting \eqref{eq:2.15} into the RHS of \eqref{eq:2.14} we find that
\begin{equation}\label{eq:2.16}
\begin{split}
\frac{1}{X}\int\limits_{X}^{2X}\Big\{\big|W_{h}\big(x^{2}\big)\big|^{2}+\big|W_{h}\big((x+\omega(x))^{2}\big)\big|^{2}\Big\}\textit{d}x&\ll\sum_{m\,\leq\,9X^{2}}r^{2}_{{\scriptscriptstyle2}}(m)+X^{2}h^{-1}\sum_{m\neq\ell\,\leq\,9X^{2}}\frac{r_{{\scriptscriptstyle2}}(m)r_{{\scriptscriptstyle2}}(\ell)}{\big|m^{2}-\ell^{2}\big|}\\
&\ll X^{2}\log{X}+X^{2}h^{-1}\big(\log{X}\big)^{3}\,.
\end{split}
\end{equation}
Multiplying \eqref{eq:2.16} by $1/h$, summing over all $1\leq h\leq X$ and using \eqref{eq:2.11}, we obtain the desired estimate \eqref{eq:2.8}. This concludes the proof.
\end{proof}
\noindent
We now turn attention to the series expansion appearing on the RHS of \eqref{eq:2.1}, with the ultimate goal of estimating its $j$-th moment. For large $j$ however, the series contains too many terms for it to be effectively estimated, and therefore must be truncated at a suitable point. The following lemma shows that if the truncation parameter is large enough, then the tail of the series is small on average.
\begin{lemma}
Let $\omega\in\mathbf{\Omega}$, and let $Y\geq1$. Then for $X>Y^{1/2}$ we have
\begin{equation}\label{eq:2.17}
\begin{split}
&\frac{1}{X}\int\limits_{X}^{2X}\bigg|\sum_{Y<m\,\leq\,X^{2}}\frac{r_{{\scriptscriptstyle 2}}(m)}{m}\sin{\big(\pi\sqrt{m}\omega(x)\big)}\sin{\big(\pi\sqrt{m}(2x+\omega(x))\big)}\bigg|^{2}\textit{d}x\ll Y^{-1/4}(\log{X})^{2}\,.
\end{split}
\end{equation}
\end{lemma}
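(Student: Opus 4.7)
The plan is to use the product-to-sum identity
$$2\sin(\pi\sqrt{m}\,\omega(x))\sin(\pi\sqrt{m}(2x+\omega(x)))=\cos(2\pi\sqrt{m}\,x)-\cos\bigl(2\pi\sqrt{m}(x+\omega(x))\bigr)$$
to rewrite the expression inside the absolute value in \eqref{eq:2.17} as $\tfrac{1}{2}\bigl(T_{a}(x)-T_{b}(x)\bigr)$, where
$$T_{a}(x)=\sum_{Y<m\leq X^{2}}\frac{r_{2}(m)}{m}\cos(2\pi\sqrt{m}\,x),\qquad T_{b}(x)=\sum_{Y<m\leq X^{2}}\frac{r_{2}(m)}{m}\cos\bigl(2\pi\sqrt{m}(x+\omega(x))\bigr).$$
Using $(u-v)^{2}\leq 2(u^{2}+v^{2})$, it then suffices to show that $\tfrac{1}{X}\int_{X}^{2X}|T_{a}(x)|^{2}\,\textit{d}x$ and $\tfrac{1}{X}\int_{X}^{2X}|T_{b}(x)|^{2}\,\textit{d}x$ are each $\ll Y^{-1/4}(\log X)^{2}$.

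For $T_{a}$ I would expand the square and apply the elementary estimate
$$\int_{X}^{2X}\cos(2\pi\sqrt{m}\,x)\cos(2\pi\sqrt{n}\,x)\,\textit{d}x=\frac{X}{2}\delta_{m,n}+O\!\left(\frac{1}{|\sqrt{m}-\sqrt{n}|}+\frac{1}{\sqrt{m}+\sqrt{n}}\right),\quad m\neq n.$$
Partial summation from $\sum_{m\leq N}r_{2}^{2}(m)\asymp N\log N$ gives a diagonal contribution of $\tfrac{1}{2}\sum_{m>Y}r_{2}^{2}(m)/m^{2}\ll Y^{-1}\log Y$. The off-diagonal contribution coming from $(\sqrt{m}+\sqrt{n})^{-1}\leq(2\sqrt{Y})^{-1}$ is $\ll (\log X)^{2}/(X\sqrt{Y})\ll(\log X)^{2}/Y$, since $X>Y^{1/2}$. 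The delicate off-diagonal piece, from $|\sqrt{m}-\sqrt{n}|^{-1}=(\sqrt{m}+\sqrt{n})/|m-n|$, I would handle by symmetrising in $m,n$, setting $k=m-n$, and bounding the result by
$$\frac{c}{XY}\sum_{0<k\leq X^{2}}\frac{1}{k}\sum_{\substack{m\leq X^{2}\\ m-k>Y}}\frac{r_{2}(m)r_{2}(m-k)}{\sqrt{m}};$$
Cauchy--Schwarz on the inner sum together with $\sum_{m\leq N}r_{2}^{2}(m)/\sqrt{m}\ll\sqrt{N}\log N$ yields $O((\log X)^{2}/Y)$, comfortably within the required range.

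For $T_{b}$ I would perform the change of variable $u=x+\omega(x)$: by condition \textbf{(2)} of $\mathbf{\Omega}$ we have $|\omega^{{\scriptscriptstyle(1)}}(x)|<1/2$ for large $x$, so this is a $C^{2}$-diffeomorphism of $[X,2X]$ onto an interval of length $\asymp X$ with Jacobian bounded above and away from zero by absolute constants. Hence $\tfrac{1}{X}\int_{X}^{2X}|T_{b}(x)|^{2}\,\textit{d}x$ is bounded by a constant multiple of $\tfrac{1}{X}\int|\widetilde{T}_{b}(u)|^{2}\,\textit{d}u$, where $\widetilde{T}_{b}(u)=\sum_{Y<m\leq X^{2}}\tfrac{r_{2}(m)}{m}\cos(2\pi\sqrt{m}\,u)$ has exactly the same structure as $T_{a}$, so the same mean-square analysis applies.

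The main obstacle is the off-diagonal contribution arising from $|\sqrt{m}-\sqrt{n}|^{-1}$: a naive separation of $m$ and $n$ loses a factor of order $X$, so one must exploit both the well-spacing of the frequencies $\sqrt{m}$ and the constraint $m,n>Y$ in order to reduce the problem to a shifted-divisor sum in $k=m-n$ that is manageable by Cauchy--Schwarz and the standard mean-value estimate $\sum_{m\leq N}r_{2}^{2}(m)\asymp N\log N$.
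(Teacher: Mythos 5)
Your proof is correct, but it takes a genuinely different route from the paper's. You undo the product-to-sum identity, writing $2\sin(\pi\sqrt{m}\,\omega(x))\sin(\pi\sqrt{m}(2x+\omega(x)))=\cos(2\pi\sqrt{m}x)-\cos(2\pi\sqrt{m}(x+\omega(x)))$, split off the two pure oscillatory sums by $(u-v)^2\le 2(u^2+v^2)$, and estimate each mean square directly — the unshifted one by the standard diagonal/off-diagonal analysis with the spacing $|\sqrt{m}-\sqrt{n}|=|m-n|/(\sqrt{m}+\sqrt{n})$, and the shifted one by the change of variable $u=x+\omega(x)$, which condition \textbf{(2)} makes a bi-Lipschitz reparametrisation (this is the same device the paper uses in Lemma~1 for $W_h((x+\omega(x))^2)$). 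The paper instead keeps the factored form, expands the square, isolates the genuine diagonal $e_1\sqrt{m}+e_2\sqrt{n}=0$, and integrates the off-diagonal by parts; the resulting boundary, $\omega^{(1)}$- and $\omega^{(2)}$-terms are controlled via Hilbert's inequality and the first mean value theorem on intervals where $\omega^{(1)}$, $\omega^{(2)}$ have constant sign, which is where conditions \textbf{(3a)} and \textbf{(3b)} enter. What your argument buys is economy: you use only conditions \textbf{(1)} and \textbf{(2)} of $\mathbf{\Omega}$, avoid Hilbert's inequality and the sign-change decompositions entirely, and in fact obtain the stronger bound $Y^{-1}(\log X)^2$ rather than $Y^{-1/4}(\log X)^2$ (the paper's exponent $1/4$ is forced by the term $\big|\text{H}_{(e_1,e_2)}\big|\ll XY^{-1/4}\log X$ in its treatment). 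What the paper's heavier machinery buys is reusability: the same integration-by-parts decomposition is needed verbatim in Lemma~3 for the $j$-fold products, where your un-multiplication trick is no longer available. Two small presentational points: you should state explicitly that the image of $[X,2X]$ under $x\mapsto x+\omega(x)$ is contained in, say, $[X,3X]$ for large $X$ before extending the $u$-integral, and note that the diagonal term $m=n$ also produces the harmless contribution $O(1/\sqrt{m})$ from the frequency $\sqrt{m}+\sqrt{n}$; neither affects the result.
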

\begin{proof}
Expanding out the product of the $\sin{}$ functions in which
$\omega$ appears in the argument, isolating the diagonal-terms and applying integration by parts for the the off-diagonal, we find that
\begin{equation}\label{eq:2.18}
\begin{split}
&\frac{1}{X}\int\limits_{X}^{2X}\bigg|\sum_{Y<m\,\leq\,X^{2}}\frac{r_{{\scriptscriptstyle 2}}(m)}{m}\sin{\big(\pi\sqrt{m}\omega(x)\big)}\sin{\big(\pi\sqrt{m}(2x+\omega(x))\big)}\bigg|^{2}\textit{d}x\\
&=\frac{1}{2X}\int\limits_{X}^{2X}\Bigg\{\,\,\sum_{Y<\,m\,\leq\,X^{2}}\frac{r^{2}_{{\scriptscriptstyle 2}}(m)}{m^{2}}\sin^{2}{\big(\pi\sqrt{m}\omega(x)\big)}\Bigg\}\textit{d}x\\
&\,\,\,\,\,-\frac{1}{4\pi X}\sum_{e_{{\scriptscriptstyle1}},e_{{\scriptscriptstyle2}}=\pm1}e_{{\scriptscriptstyle1}}e_{{\scriptscriptstyle2}}\bigg\{\text{\Large{Q}}_{(e_{{\scriptscriptstyle1}},e_{{\scriptscriptstyle2}})\,}(Y,X)-2\pi\text{\Large{H}}_{(e_{{\scriptscriptstyle1}},e_{{\scriptscriptstyle2}})\,}(Y,X)+\text{\Large{J}}_{(e_{{\scriptscriptstyle1}},e_{{\scriptscriptstyle2}})\,}(Y,X)\bigg\}\,,
\end{split}
\end{equation}
where for $e_{{\scriptscriptstyle1}},e_{{\scriptscriptstyle2}}=\pm1$ we define
\begin{equation}\label{eq:2.19}
\begin{split}
&\text{\Large{Q}}_{(e_{{\scriptscriptstyle1}},e_{{\scriptscriptstyle2}})\,}(Y,X)=\underset{e_{{\scriptscriptstyle1}}\sqrt{m}+e_{{\scriptscriptstyle2}}\sqrt{n}\neq0}{\sum_{Y<\,m,n\,\leq\,X^{2}}}\frac{r_{{\scriptscriptstyle 2}}(m)}{m}\frac{r_{{\scriptscriptstyle 2}}(n)}{n}\frac{1}{e_{{\scriptscriptstyle1}}\sqrt{m}+e_{{\scriptscriptstyle2}}\sqrt{n}}\Big\{\text{\Large{C}}^{(\sqrt{m},\sqrt{n})\,}_{(e_{{\scriptscriptstyle1}},e_{{\scriptscriptstyle2}})\,}(2X)-\text{\Large{C}}^{(\sqrt{m},\sqrt{n})\,}_{(e_{{\scriptscriptstyle1}},e_{{\scriptscriptstyle2}})\,}(X)\Big\}\,,
\end{split}
\end{equation}
with
\begin{equation}\label{eq:2.20}
\text{\Large{C}}^{(\sqrt{m},\sqrt{n})\,}_{(e_{{\scriptscriptstyle1}},e_{{\scriptscriptstyle2}})\,}(t)=\sin{\big(\pi\sqrt{m}\omega(t)\big)}\sin{\big(\pi\sqrt{n}\omega(t)\big)}\frac{\sin{\big(\pi(e_{{\scriptscriptstyle1}}\sqrt{m}+e_{{\scriptscriptstyle2}}\sqrt{n})(2t+\omega(t))\big)}}{2+\omega^{{\scriptscriptstyle(1)}}(t)}\,,
\end{equation}
and
\begin{equation}\label{eq:2.21}
\begin{split}
\text{\Large{H}}_{(e_{{\scriptscriptstyle1}},e_{{\scriptscriptstyle2}})\,}(Y,X)=\int\limits_{X}^{2X}&\Bigg\{\underset{e_{{\scriptscriptstyle1}}\sqrt{m}+e_{{\scriptscriptstyle2}}\sqrt{n}\neq0}{\sum_{Y<\,m,n\,\leq\,X^{2}}}\frac{r_{{\scriptscriptstyle 2}}(m)}{\sqrt{m}}\frac{r_{{\scriptscriptstyle 2}}(n)}{n}\frac{1}{e_{{\scriptscriptstyle1}}\sqrt{m}+e_{{\scriptscriptstyle2}}\sqrt{n}}\sin{\big(\pi(e_{{\scriptscriptstyle1}}\sqrt{m}+e_{{\scriptscriptstyle2}}\sqrt{n})(2x+\omega(x))\big)}\\
&\,\,\times\frac{\cos{\big(\pi\sqrt{m}\omega(x)\big)}\sin{\big(\pi\sqrt{n}\omega(x)\big)}}{2+\omega^{{\scriptscriptstyle(1)}}(x)}\Bigg\}\omega^{{\scriptscriptstyle(1)}}(x)\textit{d}x\,,
\end{split}
\end{equation}
and
\begin{equation}\label{eq:2.22}
\begin{split}
\text{\Large{J}}_{(e_{{\scriptscriptstyle1}},e_{{\scriptscriptstyle2}})\,}(Y,X)=\int\limits_{X}^{2X}&\Bigg\{\underset{e_{{\scriptscriptstyle1}}\sqrt{m}+e_{{\scriptscriptstyle2}}\sqrt{n}\neq0}{\sum_{Y<\,m,n\,\leq\,X^{2}}}\frac{r_{{\scriptscriptstyle 2}}(m)}{m}\frac{r_{{\scriptscriptstyle 2}}(n)}{n}\frac{1}{e_{{\scriptscriptstyle1}}\sqrt{m}+e_{{\scriptscriptstyle2}}\sqrt{n}}\sin{\big(\pi(e_{{\scriptscriptstyle1}}\sqrt{m}+e_{{\scriptscriptstyle2}}\sqrt{n})(2x+\omega(x))\big)}\\
&\,\,\times\sin{\big(\pi\sqrt{m}\omega(x)\big)}\sin{\big(\pi\sqrt{n}\omega(x)\big)}\Bigg\}\frac{\omega^{{\scriptscriptstyle(2)}}(x)}{(2+\omega^{{\scriptscriptstyle(1)}}(x))^{2}}\textit{d}x\,,
\end{split}
\end{equation}
Let us estimate each of the summands appearing on the RHS of \eqref{eq:2.18}. For the first summand, integrating trivially, we find that
\begin{equation}\label{eq:2.23}
\begin{split}
\Bigg|\int\limits_{X}^{2X}\Bigg\{\,\,\sum_{Y<\,m\,\leq\,X^{2}}\frac{r^{2}_{{\scriptscriptstyle 2}}(m)}{m^{2}}\sin^{2}{\big(\pi\sqrt{m}\omega(x)\big)}\Bigg\}\textit{d}x\Bigg|&\ll\sum_{m\,>Y}\frac{r^{2}_{{\scriptscriptstyle 2}}(m)}{m^{2}}\\
&\ll Y^{-1}\log{2Y}\,.
\end{split}
\end{equation}
Now, fix a pair $e_{{\scriptscriptstyle1}},e_{{\scriptscriptstyle2}}=\pm1$. We estimate each of the remaining summands separately.\\
\textbf{(\textnormal{\textbf{i}})} Estimating $\text{\Large{Q}}_{(e_{{\scriptscriptstyle1}},e_{{\scriptscriptstyle2}})\,}(Y,X)$. If $e_{{\scriptscriptstyle1}}=e_{{\scriptscriptstyle2}}$, then $|e_{{\scriptscriptstyle1}}\sqrt{m}+e_{{\scriptscriptstyle2}}\sqrt{n}|=\sqrt{m}+\sqrt{n}\geq(mn)^{1/4}$, and making use of condition \textbf{(2)} we find that
\begin{equation}\label{eq:2.24}
\begin{split}
e_{{\scriptscriptstyle1}}=e_{{\scriptscriptstyle2}}:\quad\big|\text{\Large{Q}}_{(e_{{\scriptscriptstyle1}},e_{{\scriptscriptstyle2}})\,}(Y,X)\big|&\ll\bigg\{\frac{1}{|2+\omega^{{\scriptscriptstyle(1)}}(2X)|}+\frac{1}{|2+\omega^{{\scriptscriptstyle(1)}}(X)|}\bigg\}\Bigg(\sum_{m\,>Y}\frac{r_{{\scriptscriptstyle 2}}(m)}{m^{5/4}}\Bigg)^{2}\\
&\ll Y^{-1/2}\,.
\end{split}
\end{equation}
If $e_{{\scriptscriptstyle1}}\neq e_{{\scriptscriptstyle2}}$, then defining $\lambda_{{\scriptscriptstyle m}}$ for a positive integer $m$ to be $\lambda_{{\scriptscriptstyle m}}=\min_{m\neq n\in\mathbb{N}}|\sqrt{m}-\sqrt{n}|$, by Hilbert's inequality \cite{montgomery1974hilbert} combined with condition \textbf{(2)} we find that
\begin{equation}\label{eq:2.25}
\begin{split}
e_{{\scriptscriptstyle1}}\neq e_{{\scriptscriptstyle2}}:\quad\big|\text{\Large{Q}}_{(e_{{\scriptscriptstyle1}},e_{{\scriptscriptstyle2}})\,}(Y,X)\big|&\ll\bigg\{\frac{1}{|2+\omega^{{\scriptscriptstyle(1)}}(2X)|}+\frac{1}{|2+\omega^{{\scriptscriptstyle(1)}}(X)|}\bigg\}\sum_{m\,>Y}\frac{r^{2}_{{\scriptscriptstyle 2}}(m)}{m^{2}}\lambda^{-1}_{{\scriptscriptstyle m}}\\
&\ll\sum_{m\,>Y}r^{2}_{{\scriptscriptstyle2}}(m)m^{-3/2}\\
&\ll Y^{-1/2}\log{2Y}\,.
\end{split}
\end{equation}
\textbf{(\textnormal{\textbf{ii}})} Estimating $\text{\Large{H}}_{(e_{{\scriptscriptstyle1}},e_{{\scriptscriptstyle2}})\,}(Y,X)$. According to the assumptions on the gap width function $\omega$, we may find a finite collection of disjoint intervals $\{I_{\alpha}\}_{\alpha\in\mathcal{A}}$ with $|\mathcal{A}|=1+|\mathcal{U}^{\omega}_{{\scriptscriptstyle X}}|$ satisfying $\biguplus_{\alpha\in\mathcal{A}}I_{\alpha}=[X,2X]$, and such that $\omega^{{\scriptscriptstyle(1)}}$ does not change sign on each such interval. Combined with the first mean value theorem for integrals, we have the following decomposition
\begin{equation}\label{eq:2.26}
\text{\Large{H}}_{(e_{{\scriptscriptstyle1}},e_{{\scriptscriptstyle2}})\,}(Y,X)=\sum_{\alpha\in{\mathcal{A}}}\text{\Large{S}}_{(e_{{\scriptscriptstyle1}},e_{{\scriptscriptstyle2}},\alpha)\,}(Y,X)\int\limits_{I_{\alpha}}\omega^{{\scriptscriptstyle(1)}}(x)\textit{d}x\,,
\end{equation}
where $\text{\Large{S}}_{(e_{{\scriptscriptstyle1}},e_{{\scriptscriptstyle2}},\alpha)\,}(Y,X)$ is given by
\begin{equation}\label{eq:2.27}
\begin{split}
\text{\Large{S}}_{(e_{{\scriptscriptstyle1}},e_{{\scriptscriptstyle2}},\alpha)\,}(Y,X)=\underset{e_{{\scriptscriptstyle1}}\sqrt{m}+e_{{\scriptscriptstyle2}}\sqrt{n}\neq0}{\sum_{Y<\,m,n\,\leq\,X^{2}}}&\frac{r_{{\scriptscriptstyle 2}}(m)}{\sqrt{m}}\frac{r_{{\scriptscriptstyle 2}}(n)}{n}\frac{1}{e_{{\scriptscriptstyle1}}\sqrt{m}+e_{{\scriptscriptstyle2}}\sqrt{n}}\sin{\big(\pi(e_{{\scriptscriptstyle1}}\sqrt{m}+e_{{\scriptscriptstyle2}}\sqrt{n})(2x_{{\scriptscriptstyle\alpha}}+\omega(x_{{\scriptscriptstyle\alpha}}))\big)}\\
&\,\,\times\frac{\cos{\big(\pi\sqrt{m}\omega(x_{{\scriptscriptstyle\alpha}})\big)}\sin{\big(\pi\sqrt{n}\omega(x_{{\scriptscriptstyle\alpha}})\big)}}{2+\omega^{{\scriptscriptstyle(1)}}(x_{{\scriptscriptstyle\alpha}})}\quad;\quad x_{{\scriptscriptstyle\alpha}}\in I_{\alpha}\,.
\end{split}
\end{equation}
As before, we treat the cases $e_{{\scriptscriptstyle1}}=e_{{\scriptscriptstyle2}}$ and $e_{{\scriptscriptstyle1}}\neq e_{{\scriptscriptstyle2}}$ separately. Making use of condition \textbf{(2)} We have
\begin{equation}\label{eq:2.28}
\begin{split}
e_{{\scriptscriptstyle1}}= e_{{\scriptscriptstyle2}}:\quad\big|\text{\Large{S}}_{(e_{{\scriptscriptstyle1}},e_{{\scriptscriptstyle2}},\alpha)\,}(Y,X)\big|&\ll\frac{1}{|2+\omega^{{\scriptscriptstyle(1)}}(x_{{\scriptscriptstyle\alpha}})|}\sum_{Y<\,m,n\,\leq\,X^{2}}\frac{r_{{\scriptscriptstyle 2}}(m)}{\sqrt{m}}\frac{r_{{\scriptscriptstyle 2}}(n)}{n}\frac{1}{\sqrt{m}+\sqrt{n}}\Big\{\mathds{1}_{n\,\leq\,m}+\mathds{1}_{n\,>\,m}\Big\}\\
&\ll(\log{X})^{2}\,.
\end{split}
\end{equation}
If $e_{{\scriptscriptstyle1}}\neq e_{{\scriptscriptstyle2}}$, then defining $\lambda_{{\scriptscriptstyle m}}$ as before, by Hilbert's and condition \textbf{(2)} we have
\begin{equation}\label{eq:2.29}
\begin{split}
e_{{\scriptscriptstyle1}}\neq e_{{\scriptscriptstyle2}}:\quad\big|\text{\Large{S}}_{(e_{{\scriptscriptstyle1}},e_{{\scriptscriptstyle2}},\alpha)\,}(Y,X)\big|&\ll\frac{1}{|2+\omega^{{\scriptscriptstyle(1)}}(x_{{\scriptscriptstyle\alpha}})|}\Bigg(\,\,\sum_{Y<\,m\,\leq\,X^{2}}\frac{r^{2}_{{\scriptscriptstyle 2}}(m)}{m}\lambda^{-1}_{{\scriptscriptstyle m}}\Bigg)^{1/2}\Bigg(\,\,\sum_{Y<\,m\,\leq\,X^{2}}\frac{r^{2}_{{\scriptscriptstyle 2}}(m)}{m^{2}}\lambda^{-1}_{{\scriptscriptstyle m}}\Bigg)^{1/2}\\
&\ll\Bigg(\,\,\sum_{Y<\,m\,\leq\,X^{2}}\frac{r^{2}_{{\scriptscriptstyle 2}}(m)}{\sqrt{m}}\Bigg)^{1/2}\Bigg(\,\,\sum_{Y<\,m\,\leq\,X^{2}}\frac{r^{2}_{{\scriptscriptstyle 2}}(m)}{m^{3/2}}\Bigg)^{1/2}\\
&\ll X^{1/2}Y^{-1/4}\log{X}\,.
\end{split}
\end{equation}
Inserting the estimates \eqref{eq:2.28} and \eqref{eq:2.29} into the RHS of \eqref{eq:2.26}, and making use of condition \textbf{(3\textnormal{\textbf{a}})}, we find that
\begin{equation}\label{eq:2.30}
\begin{split}
e_{{\scriptscriptstyle1}}=e_{{\scriptscriptstyle2}}:\quad\big|\text{\Large{H}}_{(e_{{\scriptscriptstyle1}},e_{{\scriptscriptstyle2}})\,}(Y,X)\big|&\ll\sum_{\alpha\in{\mathcal{A}}}\big|\text{\Large{S}}_{(e_{{\scriptscriptstyle1}},e_{{\scriptscriptstyle2}},\alpha)\,}(Y,X)\big|\bigg|\int\limits_{I_{\alpha}}\omega^{{\scriptscriptstyle(1)}}(x)\textit{d}x\bigg|\\
&\ll\Big(|\mathcal{A}|\max\limits_{X\leq x\leq2X}\omega(x)\Big)(\log{X})^{2}\\
&=\Big(\big(1+|\mathcal{U}^{\omega}_{{\scriptscriptstyle X}}|\big)\max\limits_{X\leq x\leq2X}\omega(x)\Big)(\log{X})^{2}\\
&\ll X^{1/2}(\log{X})^{2}\,.
\end{split}
\end{equation}
and
\begin{equation}\label{eq:2.31}
\begin{split}
e_{{\scriptscriptstyle1}}\neq e_{{\scriptscriptstyle2}}:\quad\big|\text{\Large{H}}_{(e_{{\scriptscriptstyle1}},e_{{\scriptscriptstyle2}})\,}(Y,X)\big|&\ll\sum_{\alpha\in{\mathcal{A}}}\big|\text{\Large{S}}_{(e_{{\scriptscriptstyle1}},e_{{\scriptscriptstyle2}},\alpha)\,}(Y,X)\big|\bigg|\int\limits_{I_{\alpha}}\omega^{{\scriptscriptstyle(1)}}(x)\textit{d}x\bigg|\\
&\ll\Big(|\mathcal{A}|\max\limits_{X\leq x\leq2X}\omega(x)\Big)X^{1/2}Y^{-1/4}\log{X}\\
&=\Big(\big(1+|\mathcal{U}^{\omega}_{{\scriptscriptstyle X}}|\big)\max\limits_{X\leq x\leq2X}\omega(x)\Big)X^{1/2}Y^{-1/4}\log{X}\\
&\ll XY^{-1/4}\log{X}\,.
\end{split}
\end{equation}
\textbf{(\textnormal{\textbf{iii}})} Estimating $\text{\Large{J}}_{(e_{{\scriptscriptstyle1}},e_{{\scriptscriptstyle2}})\,}(Y,X)$. According to the assumptions on the gap width function $\omega$, we may find a finite collection of disjoint intervals $\{\Upsilon_{\nu}\}_{\nu\in\mathcal{D}}$ with $|\mathcal{D}|=1+|\mathcal{V}^{\omega}_{{\scriptscriptstyle X}}|$ satisfying $\biguplus_{\nu\in\mathcal{D}}\Upsilon_{\nu}=[X,2X]$, and such that $\omega^{{\scriptscriptstyle(2)}}$ does not change sign on each such interval. Combined with the first mean value theorem for integrals, we have the following decomposition
\begin{equation}\label{eq:2.32}
\text{\Large{J}}_{(e_{{\scriptscriptstyle1}},e_{{\scriptscriptstyle2}})\,}(Y,X)=\sum_{\nu\in{\mathcal{D}}}\text{\Large{K}}_{(e_{{\scriptscriptstyle1}},e_{{\scriptscriptstyle2}},\nu)\,}(Y,X)\int\limits_{\Upsilon_{\nu}}\frac{\omega^{{\scriptscriptstyle(2)}}(x)}{(2+\omega^{{\scriptscriptstyle(1)}}(x))^{2}}\textit{d}x\,,
\end{equation}
where
\begin{equation}\label{eq:2.33}
\begin{split}
\text{\Large{K}}_{(e_{{\scriptscriptstyle1}},e_{{\scriptscriptstyle2}},v)\,}(Y,X)=\underset{e_{{\scriptscriptstyle1}}\sqrt{m}+e_{{\scriptscriptstyle2}}\sqrt{n}\neq0}{\sum_{Y<\,m,n\,\leq\,X^{2}}}&\frac{r_{{\scriptscriptstyle 2}}(m)}{m}\frac{r_{{\scriptscriptstyle 2}}(n)}{n}\frac{1}{e_{{\scriptscriptstyle1}}\sqrt{m}+e_{{\scriptscriptstyle2}}\sqrt{n}}\sin{\big(\pi(e_{{\scriptscriptstyle1}}\sqrt{m}+e_{{\scriptscriptstyle2}}\sqrt{n})(2x_{{\scriptscriptstyle\nu}}+\omega(x_{{\scriptscriptstyle\nu}}))\big)}\\
&\,\,\times\sin{\big(\pi\sqrt{m}\omega(x_{{\scriptscriptstyle\nu}})\big)}\sin{\big(\pi\sqrt{n}\omega(x_{{\scriptscriptstyle\nu}})\big)}\quad;\quad x_{{\scriptscriptstyle\nu}}\in\Upsilon_{\nu}\,.
\end{split}
\end{equation}
Repeating the exact same arguments as in the previous cases, we have
\begin{equation}\label{eq:2.34}
e_{{\scriptscriptstyle1}}=e_{{\scriptscriptstyle2}}:\quad\big|\text{\Large{K}}_{(e_{{\scriptscriptstyle1}},e_{{\scriptscriptstyle2}},\nu)\,}(Y,X)\big|\ll Y^{-1/2}\,,
\end{equation}
and
\begin{equation}\label{eq:2.35}
e_{{\scriptscriptstyle1}}\neq e_{{\scriptscriptstyle2}}:\quad\big|\text{\Large{K}}_{(e_{{\scriptscriptstyle1}},e_{{\scriptscriptstyle2}},\nu)\,}(Y,X)\big|\ll Y^{-1/2}\log{2Y}\,.
\end{equation}
Inserting the estimates \eqref{eq:2.34} and \eqref{eq:2.35} into the RHS of \eqref{eq:2.32}, and making use of condition \textbf{(2)} and condition \textbf{(3\textnormal{\textbf{b}})}, we find that
\begin{equation}\label{eq:3.20}
\begin{split}
\max_{e_{{\scriptscriptstyle1}},e_{{\scriptscriptstyle2}}=\pm1}\big|\text{\Large{J}}_{(e_{{\scriptscriptstyle1}},e_{{\scriptscriptstyle2}})\,}(Y,X)\big|&\ll\sum_{\nu\in{\mathcal{D}}}\max_{e_{{\scriptscriptstyle1}},e_{{\scriptscriptstyle2}}=\pm1}\big|\text{\Large{K}}_{(e_{{\scriptscriptstyle1}},e_{{\scriptscriptstyle2}},\nu)\,}(Y,X)\big|\bigg|\int\limits_{\Upsilon_{v}}\frac{\omega^{{\scriptscriptstyle(2)}}(x)}{(2+\omega^{{\scriptscriptstyle(1)}}(x))^{2}}\textit{d}x\bigg|\\
&\ll|\mathcal{D}|Y^{-1/2}\log{2Y}\\
&=\big(1+|\mathcal{V}^{\omega}_{{\scriptscriptstyle X}}|\big)Y^{-1/2}\log{2Y}\\
&\ll X^{1-\xi_{{\scriptscriptstyle\omega}}}Y^{-1/2}\log{2Y}\,.
\end{split}
\end{equation}
Inspecting the above estimates, and recalling that $Y<X^{2}$, we find that \eqref{eq:2.30} and \eqref{eq:2.31} dominate. Inserting these estimate into the RHS of \eqref{eq:2.18} we obtain
\begin{equation}\label{eq:2.37}
\begin{split}
\frac{1}{X}\int\limits_{X}^{2X}\bigg|&\sum_{Y<m\,\leq\,X^{2}}\frac{r_{{\scriptscriptstyle 2}}(m)}{m}\sin{\big(\pi\sqrt{m}\omega(x)\big)}\sin{\big(\pi\sqrt{m}(2x+\omega(x))\big)}\bigg|^{2}\textit{d}x\\
&\ll\frac{1}{X}\Bigg|\int\limits_{X}^{2X}\Bigg\{\,\,\sum_{Y<\,m\,\leq\,X^{2}}\frac{r^{2}_{{\scriptscriptstyle 2}}(m)}{m^{2}}\sin^{2}{\big(\pi\sqrt{m}\omega(x)\big)}\Bigg\}\textit{d}x\Bigg|\\
&\,\,\,\,\,\,+\frac{1}{X}\sum_{e_{{\scriptscriptstyle1}},e_{{\scriptscriptstyle2}}=\pm1}\bigg\{\Big|\text{\Large{Q}}_{(e_{{\scriptscriptstyle1}},e_{{\scriptscriptstyle2}})\,}(Y,X)\Big|+\Big|\text{\Large{H}}_{(e_{{\scriptscriptstyle1}},e_{{\scriptscriptstyle2}})\,}(Y,X)\Big|+\Big|\text{\Large{J}}_{(e_{{\scriptscriptstyle1}},e_{{\scriptscriptstyle2}})\,}(Y,X)\Big|\bigg\}\\
&\ll\frac{1}{X}\Big\{X^{1/2}(\log{X})^{2}+XY^{-1/4}\log{X}\Big\}\\
&\ll Y^{-1/4}(\log{X})^{2}\,.
\end{split}
\end{equation}
This concludes the proof.
\end{proof}
\noindent
We will need the following lemma when we come to estimate the $j$-th moment of the truncated part of the series expansion of $\widehat{\mathcal{E}}(x;\omega)$. The lemma essentially says that if the truncation parameter is not too large, then the non-diagonal terms make a negligible contribution. 
\begin{lemma}
Let $\omega\in\mathbf{\Omega}$, and let $Y\geq1$. Let $\mathfrak{f}(t)=\cos{t},\,\sin{t}$, and let $e_{{\scriptscriptstyle1}},\ldots,e_{{\scriptscriptstyle j}}=\pm1$. Then we have
\begin{equation}\label{eq:2.38}
\begin{split}
\Bigg|\frac{1}{X}\int\limits_{X}^{2X}&\underset{\sum_{i=1}^{j}e_{{\scriptscriptstyle i}}\sqrt{m_{{\scriptscriptstyle i}}}\neq0}{\sum_{m_{{\scriptscriptstyle1}},\ldots,m_{{\scriptscriptstyle j}}\,\leq\,Y}}\bigg\{\prod_{i=1}^{j}\frac{r_{{\scriptscriptstyle 2}}(m_{{\scriptscriptstyle i}})}{m_{{\scriptscriptstyle i}}}\sin{\big(\pi\sqrt{m_{{\scriptscriptstyle i}}}\omega(x)\big)}\bigg\}\mathfrak{f}{\bigg(\pi\Big(\sum_{i=1}^{j}e_{{\scriptscriptstyle i}}\sqrt{m_{{\scriptscriptstyle i}}}\Big)(2x+\omega(x))\bigg)}\textit{d}x\Bigg|\\
&\ll_{j}\left\{
        \begin{array}{ll}
            X^{-1/2}\log{2Y}+X^{-\xi_{{\scriptscriptstyle\omega}}}& ;\,j=1
            \\\\
            \big(X^{-1/2}Y^{1/2}+X^{-\xi_{{\scriptscriptstyle\omega}}}\big)Y^{2^{j-2}-3/2}(\log{2Y})^{j-1}& ;\,j\geq2\,.
        \end{array}
    \right.
\end{split}
\end{equation}
\end{lemma}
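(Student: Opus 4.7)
The plan is to perform integration by parts on the inner integral against the oscillating factor $\mathfrak{f}\big(\pi\alpha(2x+\omega(x))\big)$, where $\alpha:=\sum_{i=1}^{j}e_{{\scriptscriptstyle i}}\sqrt{m_{{\scriptscriptstyle i}}}$ is nonzero thanks to the diagonal restriction on the outer sum, in exact analogy with the treatment of $\text{\Large{Q}}$, $\text{\Large{H}}$ and $\text{\Large{J}}$ in the proof of Lemma $2$. Writing $g(x)=\prod_{i=1}^{j}\sin(\pi\sqrt{m_{{\scriptscriptstyle i}}}\omega(x))$ and using $\mathfrak{f}(\pi\alpha(2x+\omega(x)))=\frac{1}{\pi\alpha(2+\omega^{(1)}(x))}\frac{d}{dx}\tilde{\mathfrak{f}}(\pi\alpha(2x+\omega(x)))$ for $\tilde{\mathfrak{f}}$ a bounded primitive of $\mathfrak{f}$, a single integration by parts produces three contributions to be estimated separately: a boundary term of size $|\alpha|^{-1}$ (the analogue of $\text{\Large{Q}}$); a term $-\frac{1}{\pi\alpha}\int_{X}^{2X}\frac{g^{(1)}(x)}{2+\omega^{(1)}(x)}\tilde{\mathfrak{f}}\,dx$ whose integrand carries the factor $\omega^{(1)}(x)$, since $g^{(1)}(x)=\omega^{(1)}(x)G(x)$ with $G(x):=\sum_{i}\pi\sqrt{m_{{\scriptscriptstyle i}}}\cos(\pi\sqrt{m_{{\scriptscriptstyle i}}}\omega(x))\prod_{i'\neq i}\sin(\pi\sqrt{m_{{\scriptscriptstyle i'}}}\omega(x))$ (the analogue of $\text{\Large{H}}$); and a term $\frac{1}{\pi\alpha}\int_{X}^{2X}\frac{g(x)\omega^{(2)}(x)}{(2+\omega^{(1)}(x))^{2}}\tilde{\mathfrak{f}}\,dx$ (the analogue of $\text{\Large{J}}$).

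The summation $\sum_{m_{{\scriptscriptstyle 1}},\ldots,m_{{\scriptscriptstyle j}}\leq Y}\prod_{i} r_{{\scriptscriptstyle 2}}(m_{{\scriptscriptstyle i}})/m_{{\scriptscriptstyle i}}\cdot|\alpha|^{-1}\cdot(\ldots)$ is in each case performed by splitting on the sign pattern of the $e_{{\scriptscriptstyle i}}$'s: when all signs agree I would use $|\alpha|\geq\max_{i}\sqrt{m_{{\scriptscriptstyle i}}}$, and otherwise isolate a pair of indices of opposite sign and apply Hilbert's inequality together with the auxiliary quantity $\lambda_{{\scriptscriptstyle m}}:=\min_{n\neq m}|\sqrt{n}-\sqrt{m}|$, exactly as in \eqref{eq:2.25} and \eqref{eq:2.29}, with the remaining $j-2$ indices summed trivially. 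For the $\text{\Large{Q}}$-type boundary contribution this yields a total of $O(X^{-1})$, dominated by the RHS of \eqref{eq:2.38}. For the $\text{\Large{H}}$-type contribution I would further decompose $[X,2X]$ into the at most $1+|\mathcal{U}^{\omega}_{{\scriptscriptstyle X}}|$ subintervals on which $\omega^{(1)}$ has constant sign, apply the first mean value theorem for integrals on each, bound $|\int_{I}\omega^{(1)}(x)\,dx|\leq 2\max_{[X,2X]}\omega(x)$, and invoke condition \textbf{(3a)} to extract the factor $\ll X^{1/2}$; combined with the $\sum_{i}\sqrt{m_{{\scriptscriptstyle i}}}$ coming from $G$ and the sign-pattern summation above, this produces the first term of the claimed bound. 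For the $\text{\Large{J}}$-type one decomposes $[X,2X]$ according to the sign of $\omega^{(2)}$ into the at most $1+|\mathcal{V}^{\omega}_{{\scriptscriptstyle X}}|$ subintervals and invokes condition \textbf{(3b)}; using $|g|\leq 1$ and $\int_{\Upsilon_{\nu}}|\omega^{(2)}(x)|\,dx\ll 1$ (which follows from condition \textbf{(2)}), together with the bound on the $|\alpha|^{-1}$ sum, this yields the $X^{-\xi_{{\scriptscriptstyle\omega}}}$ term.

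The main obstacle lies in the combinatorial bookkeeping of the $\text{\Large{H}}$-type contribution for $j\geq 3$, which is responsible for the exponent $2^{j-2}-3/2$ of $Y$: one must enumerate all $2^{j}$ sign patterns of the $e_{{\scriptscriptstyle i}}$'s together with the $j$ choices of which index carries the distinguished $\sqrt{m_{{\scriptscriptstyle i}}}$ factor coming from $G$, and in each case identify a pair of opposite-sign indices to which Hilbert's inequality can be applied (with the remaining $j-2$ indices summed trivially, contributing the $(\log 2Y)^{j-2}$ factor). The underlying analytical tools, namely integration by parts, the first mean value theorem, Hilbert's inequality, and conditions \textbf{(2)}, \textbf{(3a)}, \textbf{(3b)}, are identical to those employed in the proof of Lemma $2$; it is the careful tracking of $Y$-exponents under this enumeration that constitutes the most delicate part of the proof.
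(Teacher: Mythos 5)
Your skeleton matches the paper's: one integration by parts against $\mathfrak{f}\big(\pi\alpha(2x+\omega(x))\big)$ produces a boundary term, a term carrying $\omega^{(1)}$, and a term carrying $\omega^{(2)}$; the latter two are handled by decomposing $[X,2X]$ into sign-constant subintervals, applying the first mean value theorem, and invoking conditions \textbf{(2)}, \textbf{(3a)}, \textbf{(3b)}. That part is exactly what the paper does (the functions $\Psi$, $\digamma$, $\Lambda$ in \eqref{eq:2.39}--\eqref{eq:2.42}).

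The genuine gap is in how you sum $|\alpha|^{-1}=\big|\sum_{i=1}^{j}e_{i}\sqrt{m_{i}}\big|^{-1}$ over $m_{1},\ldots,m_{j}\leq Y$ when $j\geq3$. You propose to isolate a pair of opposite-sign indices and apply Hilbert's inequality with $\lambda_{m}$ as in \eqref{eq:2.25} and \eqref{eq:2.29}, summing the remaining $j-2$ indices trivially. This fails: once the other $j-2$ variables are frozen, the denominator is $\sqrt{m_{i_{0}}}-\sqrt{m_{i_{1}}}+c$ with a shift $c$ depending on the frozen variables, and this is no longer a bilinear form of Hilbert type --- the separation of the shifted frequencies is not controlled by $\lambda_{m}$, and $|\sqrt{m_{i_{0}}}-\sqrt{m_{i_{1}}}+c|$ can be far smaller than $\lambda_{m_{i_{0}}}$. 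The paper instead invokes the Hughes--Rudnick separation estimate (their Lemma $7$, quoted in Case (III)): if $\sum_{i}e_{i}\sqrt{m_{i}}\neq0$ then $\big|\sum_{i}e_{i}\sqrt{m_{i}}\big|\gg_{j}\max_{i}m_{i}^{1/2-2^{j-2}}$, a norm-form (conjugate product) bound. This pointwise lower bound is then summed directly, and it --- not any combinatorial enumeration of sign patterns --- is the sole source of the exponent $2^{j-2}-3/2$ of $Y$ in \eqref{eq:2.38}. Your closing paragraph attributes that exponent to bookkeeping over the $2^{j}$ sign patterns and $j$ choices of distinguished index, which cannot produce an exponentially growing power of $Y$; without the separation lemma your argument does not close for any $j\geq3$. (For $j=1,2$ your approach is fine, though the paper handles $j=2$ even more simply via $|e_{1}\sqrt{m_{1}}+e_{2}\sqrt{m_{2}}|^{-1}\ll\max_{i}\sqrt{m_{i}}$, with no Hilbert inequality needed.)
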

\begin{proof}
Let $\mathfrak{g}(t)=\cos{t},\,-\sin{t}$ be the primitive function of $\mathfrak{f}$. We decompose the integral in question using the same procedure as in the proof of Lemma $2$. Let $\{I_{\alpha}\}_{\alpha\in\mathcal{A}}$ and $\{\Upsilon_{\nu}\}_{\nu\in\mathcal{D}}$ be collection of intervals with $|\mathcal{A}|=1+|\mathcal{U}^{\omega}_{{\scriptscriptstyle X}}|$ and $|\mathcal{D}|=1+|\mathcal{V}^{\omega}_{{\scriptscriptstyle X}}|$, satisfying $\biguplus_{\alpha\in\mathcal{A}}I_{\alpha}=[X,2X]$ and $\biguplus_{\nu\in\mathcal{D}}\Upsilon_{\nu}=[X,2X]$, such that $\omega^{{\scriptscriptstyle(1)}}$ and $\omega^{{\scriptscriptstyle(2)}}$ do not change sign on $I_{\alpha}$ and $\Upsilon_{\nu}$ respectively. For integers $m_{{\scriptscriptstyle1}},\ldots,m_{{\scriptscriptstyle j}}\geq1$, let
\begin{equation*}
\Psi^{(\sqrt{m_{{\scriptscriptstyle1}}},\ldots\sqrt{m_{{\scriptscriptstyle j}}})\,}_{(e_{{\scriptscriptstyle1}},\ldots e_{{\scriptscriptstyle j}})\,}(t)=\mathfrak{g}\bigg(\pi\Big(\sum_{i=1}^{j}e_{{\scriptscriptstyle i}}\sqrt{m_{{\scriptscriptstyle i}}}\Big)(2t+\omega(t))\bigg)\frac{\prod_{i=1}^{j}\sin{\big(\pi\sqrt{m_{{\scriptscriptstyle i}}}\omega(t)\big)}}{2+\omega^{{\scriptscriptstyle(1)}}(t)}\,,
\end{equation*}
\begin{equation*}
\digamma^{(\sqrt{m_{{\scriptscriptstyle1}}},\ldots\sqrt{m_{{\scriptscriptstyle j}}})\,}_{(e_{{\scriptscriptstyle1}},\ldots e_{{\scriptscriptstyle j}})\,}(t)=\mathfrak{g}\bigg(\pi\Big(\sum_{i=1}^{j}e_{{\scriptscriptstyle i}}\sqrt{m_{{\scriptscriptstyle i}}}\Big)(2t+\omega(t))\bigg)\sum_{k=1}^{j}\frac{\sqrt{m_{{\scriptscriptstyle k}}}\cos{\big(\pi\sqrt{m_{{\scriptscriptstyle k}}}\omega(t)\big)}}{2+\omega^{{\scriptscriptstyle(1)}}(t)}\underset{i\neq k}{\prod_{i=1}^{j}}\sin{\big(\pi\sqrt{m_{{\scriptscriptstyle i}}}\omega(t)\big)}\,,
\end{equation*}
\begin{equation*}
\Lambda^{(\sqrt{m_{{\scriptscriptstyle1}}},\ldots\sqrt{m_{{\scriptscriptstyle j}}})\,}_{(e_{{\scriptscriptstyle1}},\ldots e_{{\scriptscriptstyle j}})\,}(t)=\mathfrak{g}\bigg(\pi\Big(\sum_{i=1}^{j}e_{{\scriptscriptstyle i}}\sqrt{m_{{\scriptscriptstyle i}}}\Big)(2t+\omega(t))\bigg)\prod_{i=1}^{j}\sin{\big(\pi\sqrt{m_{{\scriptscriptstyle i}}}\omega(t)\big)}\,.
\end{equation*}
Integrating by parts once, combined with the first mean value theorem for integrals, we have the following decomposition
\begin{equation}\label{eq:2.39}
\begin{split}
&\frac{1}{X}\int\limits_{X}^{2X}\underset{\sum_{i=1}^{j}e_{{\scriptscriptstyle i}}\sqrt{m_{{\scriptscriptstyle i}}}\neq0}{\sum_{m_{{\scriptscriptstyle1}},\ldots,m_{{\scriptscriptstyle j}}\,\leq\,Y}}\bigg\{\prod_{i=1}^{j}\frac{r_{{\scriptscriptstyle 2}}(m_{{\scriptscriptstyle i}})}{m_{{\scriptscriptstyle i}}}\sin{\big(\pi\sqrt{m_{{\scriptscriptstyle i}}}\omega(x)\big)}\bigg\}\mathfrak{f}{\bigg(\pi\Big(\sum_{i=1}^{j}e_{{\scriptscriptstyle i}}\sqrt{m_{{\scriptscriptstyle i}}}\Big)(2x+\omega(x))\bigg)}\textit{d}x\\
&=\frac{1}{\pi X}\underset{\sum_{i=1}^{j}e_{{\scriptscriptstyle i}}\sqrt{m_{{\scriptscriptstyle i}}}\neq0}{\sum_{m_{{\scriptscriptstyle1}},\ldots,m_{{\scriptscriptstyle j}}\,\leq\,Y}}\Bigg(\frac{1}{\sum_{i=1}^{j}e_{{\scriptscriptstyle i}}\sqrt{m_{{\scriptscriptstyle i}}}}\prod_{i=1}^{j}\frac{r_{{\scriptscriptstyle 2}}(m_{{\scriptscriptstyle i}})}{m_{{\scriptscriptstyle i}}}\Bigg)\Bigg\{\Psi^{(\sqrt{m_{{\scriptscriptstyle1}}},\ldots\sqrt{m_{{\scriptscriptstyle j}}})\,}_{(e_{{\scriptscriptstyle1}},\ldots e_{{\scriptscriptstyle j}})\,}(2X)-\Psi^{(\sqrt{m_{{\scriptscriptstyle1}}},\ldots\sqrt{m_{{\scriptscriptstyle j}}})\,}_{(e_{{\scriptscriptstyle1}},\ldots e_{{\scriptscriptstyle j}})\,}(X)\\
&\,\,\,\,\,-\pi\sum_{\alpha\in{\mathcal{A}}}\digamma^{(\sqrt{m_{{\scriptscriptstyle1}}},\ldots\sqrt{m_{{\scriptscriptstyle j}}})\,}_{(e_{{\scriptscriptstyle1}},\ldots e_{{\scriptscriptstyle j}})\,}(y_{{\scriptscriptstyle\alpha}})\int\limits_{I_{\alpha}}\omega^{{\scriptscriptstyle(1)}}(x)\textit{d}x+\sum_{\nu\in{\mathcal{D}}}\Lambda^{(\sqrt{m_{{\scriptscriptstyle1}}},\ldots\sqrt{m_{{\scriptscriptstyle j}}})\,}_{(e_{{\scriptscriptstyle1}},\ldots e_{{\scriptscriptstyle j}})\,}(y_{{\scriptscriptstyle\nu}})\int\limits_{\Upsilon_{\nu}}\frac{\omega^{{\scriptscriptstyle(2)}}(x)}{(2+\omega^{{\scriptscriptstyle(1)}}(x))^{2}}\textit{d}x\Bigg\}\,,
\end{split}
\end{equation}
where $y_{{\scriptscriptstyle\alpha}}\in I_{\alpha}$ and $y_{{\scriptscriptstyle\nu}}\in\Upsilon_{\nu}$. We have according to condition \textbf{(2)}
\begin{equation}\label{eq:2.40}
\big|\Psi^{(\sqrt{m_{{\scriptscriptstyle1}}},\ldots\sqrt{m_{{\scriptscriptstyle j}}})\,}_{(e_{{\scriptscriptstyle1}},\ldots e_{{\scriptscriptstyle j}})\,}(2X)-\Psi^{(\sqrt{m_{{\scriptscriptstyle1}}},\ldots\sqrt{m_{{\scriptscriptstyle j}}})\,}_{(e_{{\scriptscriptstyle1}},\ldots e_{{\scriptscriptstyle j}})\,}(X)\big|\ll1\,.
\end{equation}
According to condition \textbf{(2)} and condition \textbf{(3\textnormal{\textbf{a}})} we have
\begin{equation}\label{eq:2.41}
\begin{split}
\bigg|\sum_{\alpha\in{\mathcal{A}}}\digamma^{(\sqrt{m_{{\scriptscriptstyle1}}},\ldots\sqrt{m_{{\scriptscriptstyle j}}})\,}_{(e_{{\scriptscriptstyle1}},\ldots e_{{\scriptscriptstyle j}})\,}(y_{{\scriptscriptstyle\alpha}})\int\limits_{I_{\alpha}}\omega^{{\scriptscriptstyle(1)}}(x)\textit{d}x\bigg|&\ll_{j}\Big(|\mathcal{A}|\max\limits_{X\leq x\leq2X}\omega(x)\Big)\underset{1\leq\,k\,\leq j}{\max}\sqrt{m_{{\scriptscriptstyle k}}}\\
&=\Big(\big(1+|\mathcal{U}^{\omega}_{{\scriptscriptstyle X}}|\big)\max\limits_{X\leq x\leq2X}\omega(x)\Big)\underset{1\leq\,k\,\leq j}{\max}\sqrt{m_{{\scriptscriptstyle k}}}\\
&\ll X^{1/2}\underset{1\leq\,k\,\leq j}{\max}\sqrt{m_{{\scriptscriptstyle k}}}\,,
\end{split}
\end{equation}
and according to condition \textbf{(2)}  and condition \textbf{(3\textnormal{\textbf{b}})} we have
\begin{equation}\label{eq:2.42}
\begin{split}
\bigg|\sum_{\nu\in{\mathcal{D}}}\Lambda^{(\sqrt{m_{{\scriptscriptstyle1}}},\ldots\sqrt{m_{{\scriptscriptstyle j}}})\,}_{(e_{{\scriptscriptstyle1}},\ldots e_{{\scriptscriptstyle j}})\,}(y_{{\scriptscriptstyle\nu}})\int\limits_{\Upsilon_{\nu}}\frac{\omega^{{\scriptscriptstyle(2)}}(x)}{(2+\omega^{{\scriptscriptstyle(1)}}(x))^{2}}\textit{d}x\bigg|&\ll|\mathcal{D}|\\
&=1+|\mathcal{V}^{\omega}_{{\scriptscriptstyle X}}|\\
&\ll X^{1-\xi_{{\scriptscriptstyle\omega}}}\,.
\end{split}
\end{equation}
%
We now proceed to prove \eqref{eq:2.38}, and we do so according the whether $j=1,2$ or $j\geq3$.\\
\textbf{\textnormal{\textbf{Case (I)}}}: $j=1$. In this case the restriction $\pm\sqrt{m}\neq0$ is redundant, and it follows from \eqref{eq:2.39} that
\begin{equation}\label{eq:2.43}
\begin{split}
\Bigg|\frac{1}{X}\int\limits_{X}^{2X}\sum_{m\,\leq\,Y}&\bigg\{\frac{r_{{\scriptscriptstyle 2}}(m)}{m}\sin{\big(\pi\sqrt{m}\omega(x)\big)}\bigg\}\mathfrak{f}{\bigg(\pi\Big(\pm\sqrt{m}\Big)(2x+\omega(x))\bigg)}\textit{d}x\Bigg|\\
&\ll\sum_{m\,\leq Y}\frac{r_{{\scriptscriptstyle 2}}(m)}{m^{3/2}}\bigg\{X^{-1/2}m^{1/2}+X^{-\xi_{{\scriptscriptstyle\omega}}}\bigg\}\\
&\ll X^{-1/2}\log{2Y}+X^{-\xi_{{\scriptscriptstyle\omega}}}\,.
\end{split}
\end{equation}
\textbf{\textnormal{\textbf{Case (II)}}}: $j=2$. We have $e_{{\scriptscriptstyle1}}\sqrt{m_{{\scriptscriptstyle1}}}+e_{{\scriptscriptstyle2}}\sqrt{m_{{\scriptscriptstyle2}}}\neq0\Rightarrow\frac{1}{|e_{{\scriptscriptstyle1}}\sqrt{m_{{\scriptscriptstyle1}}}+e_{{\scriptscriptstyle2}}\sqrt{m_{{\scriptscriptstyle2}}}|}\ll\max_{i=1,2}\sqrt{m_{{\scriptscriptstyle i}}}$, and it follows from \eqref{eq:2.39} that
\begin{equation}\label{eq:2.44}
\begin{split}
\Bigg|\frac{1}{X}\int\limits_{X}^{2X}\underset{e_{{\scriptscriptstyle1}}\sqrt{m_{{\scriptscriptstyle1}}}+e_{{\scriptscriptstyle2}}\sqrt{m_{{\scriptscriptstyle2}}}\neq0}{\sum_{m_{{\scriptscriptstyle1}},m_{{\scriptscriptstyle2}}\,\leq\,Y}}&\bigg\{\prod_{i=1}^{2}\frac{r_{{\scriptscriptstyle 2}}(m_{{\scriptscriptstyle i}})}{m_{{\scriptscriptstyle i}}}\sin{\big(\pi\sqrt{m_{{\scriptscriptstyle i}}}\omega(x)\big)}\bigg\}\mathfrak{f}{\bigg(\pi\Big(e_{{\scriptscriptstyle1}}\sqrt{m_{{\scriptscriptstyle1}}}+e_{{\scriptscriptstyle2}}\sqrt{m_{{\scriptscriptstyle2}}}\Big)(2x+\omega(x))\bigg)}\textit{d}x\Bigg|\\
&\ll\sum_{m_{{\scriptscriptstyle1}},m_{{\scriptscriptstyle2}}\,\leq\,Y}\Big(\underset{i=1,2}{\max}\sqrt{m_{{\scriptscriptstyle i}}}\Big)\Bigg(\prod_{i=1}^{2}\frac{r_{{\scriptscriptstyle 2}}(m_{{\scriptscriptstyle i}})}{m_{{\scriptscriptstyle i}}}\bigg)\bigg\{X^{-1/2}\underset{i=1,2}{\max}\sqrt{m_{{\scriptscriptstyle i}}}+
X^{-\xi_{{\scriptscriptstyle\omega}}}\bigg\}\\
&\ll X^{-1/2}Y\log{2Y}+X^{-\xi_{{\scriptscriptstyle\omega}}}Y^{1/2}\log{2Y}\,.
\end{split}
\end{equation}
\textbf{\textnormal{\textbf{Case (III)}}}: $j\geq3$. To handle this case we need the following result (see \cite{HughesRudnick}, Lemma $7$): For positive integers $m_{1},\ldots,m_{j}$ positive integers and $\textit{e}_{1},\ldots,\textit{e}_{j}=\pm1$ it holds
\begin{equation*}
\sum_{i=1}^{j}\textit{e}_{i}\sqrt{m_{i}}\neq0\Rightarrow\bigg|\sum_{i=1}^{j}\textit{e}_{i}\sqrt{m_{i}}\bigg|\gg_{j}\underset{1\leq\,i\,\leq j}{\max}m_{{\scriptscriptstyle i}}^{1/2-2^{j-2}}\,.
\end{equation*}
It then follows from \eqref{eq:2.39} that
\begin{equation}\label{eq:2.45}
\begin{split}
\Bigg|\frac{1}{X}\int\limits_{X}^{2X}&\underset{\sum_{i=1}^{j}e_{{\scriptscriptstyle i}}\sqrt{m_{{\scriptscriptstyle i}}}\neq0}{\sum_{m_{{\scriptscriptstyle1}},\ldots,m_{{\scriptscriptstyle j}}\,\leq\,Y}}\bigg\{\prod_{i=1}^{j}\frac{r_{{\scriptscriptstyle 2}}(m_{{\scriptscriptstyle i}})}{m_{{\scriptscriptstyle i}}}\sin{\big(\pi\sqrt{m_{{\scriptscriptstyle i}}}\omega(x)\big)}\bigg\}\mathfrak{f}{\bigg(\pi\Big(\sum_{i=1}^{j}e_{{\scriptscriptstyle i}}\sqrt{m_{{\scriptscriptstyle i}}}\Big)(2x+\omega(x))\bigg)}\textit{d}x\Bigg|\\
&\ll_{j}\sum_{m_{{\scriptscriptstyle1}},\ldots,m_{{\scriptscriptstyle j}}\,\leq\,Y}\Big(\underset{1\leq\,i\,\leq j}{\max}m_{{\scriptscriptstyle i}}^{2^{j-2}-1/2}\Big)\Bigg(\prod_{i=1}^{j}\frac{r_{{\scriptscriptstyle 2}}(m_{{\scriptscriptstyle i}})}{m_{{\scriptscriptstyle i}}}\bigg)\bigg\{X^{-1/2}\underset{1\leq\,i\,\leq j}{\max}\sqrt{m_{{\scriptscriptstyle i}}}+
X^{-\xi_{{\scriptscriptstyle\omega}}}\bigg\}\\
&\ll_{j} X^{-1/2}Y^{2^{j-2}-1}(\log{2Y})^{j-1}+X^{-\xi_{{\scriptscriptstyle\omega}}}Y^{2^{j-2}-3/2}(\log{2Y})^{j-1}\,.
\end{split}
\end{equation}
The proof of Lemma $3$ is complete.
\end{proof}
\noindent
We need one final result before we can proceed to estimate the moments of $\widehat{\mathcal{E}}(x;\omega)$. In the lemma below we establish an asymptotic mean value estimate, an estimate that will turn out to be the leading term in the $j$-th moment estimate for the truncated part of the series expansion of $\widehat{\mathcal{E}}(x;\omega)$.       
\begin{lemma}
Let $\omega\in\mathbf{\Omega}$, and let $Y\geq1$. Then for $j\equiv0\,(2)$ we have
\begin{equation}\label{eq:2.46}
\begin{split}
(-1)^{\frac{j}{2}}\bigg(\frac{\sqrt{2}}{\pi}\bigg)^{j}&\sum_{e_{{\scriptscriptstyle1}},\ldots,e_{{\scriptscriptstyle j}}=\pm1}\bigg\{\prod_{i=1}^{j}e_{{\scriptscriptstyle i}}\bigg\}\frac{1}{X}\int\limits_{X}^{2X}\bigg\{\,\,\underset{\sum_{i=1}^{j}e_{{\scriptscriptstyle i}}\sqrt{m_{{\scriptscriptstyle i}}}=0}{\sum_{m_{{\scriptscriptstyle1}},\ldots,m_{{\scriptscriptstyle j}}\,\leq\,Y}}\prod_{i=1}^{j}\frac{r_{{\scriptscriptstyle 2}}(m_{{\scriptscriptstyle i}})}{m_{{\scriptscriptstyle i}}}\sin{\big(\pi\sqrt{m_{{\scriptscriptstyle i}}}\omega(x)\big)}\bigg\}\textit{d}x\\
&=\Bigg\{\frac{2^{2j}j!}{(j/2)!}+O_{j}\bigg(\Big|\log{\Big(\,\underset{X<x<2X}{\max}\omega(x)\Big)}\Big|^{-1}\bigg)\Bigg\}\mathscr{M}_{j}(X;\omega)+O_{j}\big(Y^{-1}\log{2Y}\big)\,.
\end{split}
\end{equation}
\end{lemma}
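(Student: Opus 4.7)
The plan is to isolate the contribution of \emph{perfect pairings} among the diagonal solutions $\sum_{i=1}^{j}e_{i}\sqrt{m_{i}}=0$, evaluate it asymptotically, and show that the remaining configurations together with the truncation error are absorbed into $O_{j}(Y^{-1}\log 2Y)$. A perfect pairing is a partition of $\{1,\ldots,j\}$ into $j/2$ pairs $\{i,i'\}$ with $m_{i}=m_{i'}$ and $e_{i}=-e_{i'}$. The number of matchings on $\{1,\ldots,j\}$ is $(j-1)!!=j!/(2^{j/2}(j/2)!)$, each admits $2^{j/2}$ sign choices contributing $\prod_{i}e_{i}=(-1)^{j/2}$, and on each pair $\sin(\pi\sqrt{m_{i}}\omega(x))\sin(\pi\sqrt{m_{i'}}\omega(x))=\sin^{2}(\pi\sqrt{m}\omega(x))$. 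Combining these with the prefactor $(-1)^{j/2}(\sqrt{2}/\pi)^{j}$, the perfect-pairing contribution reduces to
\[
\bigg(\frac{2}{\pi^{2}}\bigg)^{j/2}\frac{j!}{(j/2)!}\cdot\frac{1}{X}\int_{X}^{2X}S(x)^{j/2}\,dx,\qquad S(x):=\sum_{m\leq Y}\frac{r_{2}(m)^{2}}{m^{2}}\sin^{2}\!\big(\pi\sqrt m\,\omega(x)\big).
\]

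For the asymptotics of $S(x)$, I would split at $M(x):=\omega(x)^{-2}$. On $m\leq M(x)$ the Taylor expansion $\sin^{2}(\pi\sqrt m\omega)=\pi^{2}m\omega^{2}(1+O(m\omega^{2}))$ together with the asymptotic $\sum_{m\leq N}r_{2}(m)^{2}/m\sim 2(\log N)^{2}$ (obtained from $\sum r_{2}(m)^{2}m^{-s}=\zeta(s)^{2}L(s,\chi)^{2}/\big(\zeta(2s)(1+2^{-s})\big)$ via a standard Tauberian argument) yields a main contribution of $8\pi^{2}(\omega(x)\log\omega(x))^{2}$ with relative error $O(|\log\omega(x)|^{-1})$. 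On $m>M(x)$ the trivial bound $\sin^{2}\leq 1$ combined with $\sum_{m>N}r_{2}(m)^{2}/m^{2}\ll(\log N)/N$ produces a strictly smaller contribution. Raising $S(x)^{j/2}$, integrating over $[X,2X]$, and using $|\log\omega(x)|^{-1}\leq|\log(\max_{X\leq x\leq 2X}\omega(x))|^{-1}$ together with $(2/\pi^{2})^{j/2}(8\pi^{2})^{j/2}=2^{2j}$, gives exactly the main term $2^{2j}(j!/(j/2)!)\mathscr{M}_{j}(X;\omega)$ with the asserted relative error.

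The non-perfect-pairing contributions to the diagonal are handled by parametrizing $m_{i}=a_{i}^{2}d_{i}$ with $d_{i}$ squarefree and using $\mathbb{Q}$-linear independence of $\{\sqrt{d}:d\text{ squarefree}\}$: non-pairing solutions then live on varieties of dimension strictly less than $j/2$, and $\sum\prod r_{2}(m_{i})/m_{i}$ restricted to them (with $m_{i}\leq Y$ and $|\sin|\leq 1$) is bounded by $O_{j}(Y^{-1}(\log 2Y)^{C_{j}})$, following the enumeration method of Hughes--Rudnick (cf.\ Lemma~7 of \cite{HughesRudnick}, already invoked in Lemma~3 above). The truncation error $|S(x)-\widetilde S(x)|\ll(\log 2Y)/Y$ (with $\widetilde S$ the full series) propagates through the $j/2$-th power, using boundedness $S,\widetilde S=O(1)$, to yield the remaining $O_{j}(Y^{-1}\log 2Y)$. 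The main obstacle is the sharp control of the non-perfect-pairing diagonal: correctly enumerating all inequivalent solution types of $\sum e_{i}\sqrt{m_{i}}=0$, verifying that each genuinely drops a free variable relative to perfect pairings, and avoiding loss of cancellation when summing weighted by $r_{2}(m_{i})/m_{i}$ over these subvarieties.
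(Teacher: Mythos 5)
Your overall architecture matches the paper's: Besicovitch's lemma reduces the diagonal $\sum_{i}e_{i}\sqrt{m_{i}}=0$ to partitions of $\{1,\ldots,j\}$ into blocks of size $\geq2$ on which the squarefree kernels agree; the perfect pairings carry the combinatorial factor $j!/(j/2)!$ times $\tfrac{1}{X}\int_{X}^{2X}S(x)^{j/2}\textit{d}x$ (up to a pairwise-distinctness correction you gloss over); and your evaluation of $S(x)$ via $\sum_{n\leq y}r_{2}^{2}(n)=4y\log{y}+O(y)$ reproduces the paper's main term $8\pi^{2}\big(\omega(x)\log{\omega(x)}\big)^{2}$ and the constant $2^{2j}$ exactly.

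The gap is in your treatment of the non-pairing diagonal configurations, which you yourself flag as the main obstacle but then resolve incorrectly. With only the trivial bound $|\sin|\leq1$, a block of size $\ell\geq3$ contributes $\sum_{d\leq Y}\mu^{2}(d)\,r_{2}(d)^{\ell}d^{-\ell}\sum_{\sum e_{i}k_{i}=0}\prod_{i}d_{2}(k_{i})^{2}k_{i}^{-2}=O_{\ell}(1)$: the weights $r_{2}(m)/m$ make these restricted sums \emph{convergent}, so dropping a free variable does not buy a power of $Y$ (unlike in the Hughes--Rudnick normalization you are importing), and the claimed bound $O_{j}(Y^{-1}(\log{2Y})^{C_{j}})$ is false. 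Concretely, for $j=6$ the partition into two blocks of size $3$ would contribute $O(1)$ under your bound, swamping the main term $\asymp(\omega\log{\omega})^{6}\to0$. What is actually needed --- and what the paper proves as $|\mathcal{P}(x;m,\ell,y)|\ll_{\ell}\omega^{\ell}(x)r_{2}^{\ell}(m)m^{-\ell/2}$ --- is to apply $|\sin(\pi\sqrt{m}k\omega(x))|\leq\pi\sqrt{m}k\,\omega(x)$ inside every block, extracting a factor $\omega^{\ell}(x)$ per block of size $\ell$ with \emph{no} compensating powers of $\log{\omega}$. A partition with $s\leq j/2-1$ blocks then contributes $\ll_{j}\omega^{j}(x)|\log{\omega(x)}|^{j-4}$ pointwise, and after integration this lands in the \emph{relative} error $O_{j}\big(|\log(\max_{X<x<2X}\omega(x))|^{-1}\big)\mathscr{M}_{j}(X;\omega)$ --- not in the absolute $O_{j}(Y^{-1}\log{2Y})$ term, which in the statement only collects series-truncation errors. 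The same mechanism (together with the replacement of the sum over pairwise-distinct block values by the unrestricted power $S(x)^{j/2}$) is where the stated error structure actually comes from, so your proof cannot close as written without this ingredient.
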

\begin{proof}
Fix an integer $j\equiv0\,(2)$. For $x>0$ and $e_{{\scriptscriptstyle1}},\ldots,e_{{\scriptscriptstyle j}}=\pm1$, we have
\begin{equation}\label{eq:2.47}
\begin{split}
&\underset{\sum_{i=1}^{j}e_{{\scriptscriptstyle i}}\sqrt{m_{{\scriptscriptstyle i}}}=0}{\sum_{m_{{\scriptscriptstyle1}},\ldots,m_{{\scriptscriptstyle j}}\,\leq\,Y}}\prod_{i=1}^{j}\frac{r_{{\scriptscriptstyle 2}}(m_{{\scriptscriptstyle i}})}{m_{{\scriptscriptstyle i}}}\sin{\big(\pi\sqrt{m_{{\scriptscriptstyle i}}}\omega(x)\big)}\\
&=\sum_{m_{1},\ldots,m_{j}\,\leq\,Y}\prod_{i=1}^{j}\frac{\mu^{2}(m_{i})}{m_{i}}\,\,\underset{\sum_{i=1}^{j}\textit{e}_{i}k_{i}\sqrt{m_{i}}=0}{\sum_{k_{1}\,\leq\,\sqrt{Y/m_{1}},\ldots,k_{j}\,\leq\,\sqrt{Y/m_{j}}}}\,\,\prod_{i=1}^{j}\frac{r_{{\scriptscriptstyle 2}}\big(m_{i}k^{2}_{i}\big)}{k^{2}_{i}}\sin{\big(\pi\sqrt{m_{{\scriptscriptstyle i}}}k_{i}\omega(x)\big)}\,.
\end{split}
\end{equation}
Now, the set $\{\sqrt{m}: m\text{ is square free}\}$ consists of $\mathbb{Q}-$linearly independent elements (this is Besicovitch's Lemma \cite{Besicovitch}). It follows that the relation $\sum_{i=1}^{j}\textit{e}_{i}k_{i}\sqrt{m_{i}}=0$ with $m_{1},\ldots,m_{j}$ square-free, is equivalent to a set of relations $\sum_{i\in\mathfrak{P}_{\ell}}\textit{e}_{i}k_{i}=0$ for  $1\leq\ell\leq s$, where $\biguplus_{\ell=1}^{s}\mathfrak{P}_{\ell}=\{1,,\ldots,j\}$ is a partition corresponding to a grouping of the $m_{i}$'s into distinct elements. Clearly, we have $|\mathfrak{P}_{\ell}|\geq2$ for each $\ell$. Denoting by $\mathcal{N}(j;\ell_{1},\ldots,\ell_{s})$ the number of partitions $\biguplus_{\ell=1}^{s}\mathfrak{P}_{\ell}=\{1,,\ldots,j\}$ with $\mathfrak{P}_{i}=\ell_{i}$, it follows that
\begin{equation}\label{eq:2.48}
\begin{split}
\sum_{e_{{\scriptscriptstyle1}},\ldots,e_{{\scriptscriptstyle j}}=\pm1}&\bigg\{\prod_{i=1}^{j}e_{{\scriptscriptstyle i}}\bigg\}\underset{\sum_{i=1}^{j}e_{{\scriptscriptstyle i}}\sqrt{m_{{\scriptscriptstyle i}}}=0}{\sum_{m_{{\scriptscriptstyle1}},\ldots,m_{{\scriptscriptstyle j}}\,\leq\,Y}}\prod_{i=1}^{j}\frac{r_{{\scriptscriptstyle 2}}(m_{{\scriptscriptstyle i}})}{m_{{\scriptscriptstyle i}}}\sin{\big(\pi\sqrt{m_{{\scriptscriptstyle i}}}\omega(x)\big)}\\
&=\sum_{s=1}^{j/2}\underset{\ell_{1},\,\ldots\,,\ell_{s}\geq2}{\sum_{\ell_{1}+\cdots+\ell_{s}=j}}\mathcal{N}(j;\ell_{1},\ldots,\ell_{s})\,\,\sideset{}{^\sharp}\sum_{m_{1},\ldots,m_{s}\leq Y}\prod_{i=1}^{s}\mathcal{P}\big(x;m_{i},\ell_{i},\sqrt{Y/m_{i}}\,\big)\\
&=\frac{j!}{2^{j/2}(j/2)!}\sideset{}{^\sharp}\sum_{m_{1},\ldots,m_{j/2}\leq Y}\prod_{i=1}^{j/2}\mathcal{P}\big(x;m_{i},2,\sqrt{Y/m_{i}}\,\big)\\
&\,\,\,\,\,+\sum_{s=1}^{j/2-1}\underset{\ell_{1},\,\ldots\,,\ell_{s}\geq2}{\sum_{\ell_{1}+\cdots+\ell_{s}=j}}\mathcal{N}(j;\ell_{1},\ldots,\ell_{s})\,\,\sideset{}{^\sharp}\sum_{m_{1},\ldots,m_{s}\leq Y}\prod_{i=1}^{s}\mathcal{P}\big(x;m_{i},\ell_{i},\sqrt{Y/m_{i}}\,\big)\,,
\end{split}
\end{equation}
where the symbol $\sharp$ indicates that the summation is over pairwise distinct integers $m_{1},\ldots,m_{s}$, and for $y\geq1$, $m\geq1$ and $\ell\geq2$, the term $\mathcal{P}(x;m,\ell,y)$ is defined by
\begin{equation}\label{eq:2.49}
\mathcal{P}(x;m,\ell,y)=\frac{\mu^{2}(m)}{m^{\ell}}\sum_{\textit{e}_{1},\ldots,\textit{e}_{\ell}=\pm1}\bigg\{\prod_{i=1}^{\ell}e_{{\scriptscriptstyle i}}\bigg\}\underset{\textit{e}_{1}k_{1}+\cdots+\textit{e}_{\ell}k_{\ell}=0}{\sum_{k_{1},\ldots,k_{\ell}\leq y}}\prod_{i=1}^{\ell}\frac{r_{{\scriptscriptstyle2}}\big(mk^{2}_{i}\big)}{k_{i}^{2}}\sin{\big(\pi\sqrt{m_{{\scriptscriptstyle i}}}k_{i}\omega(x)\big)}\,.
\end{equation}
We will show that the term $\sum^{\sharp}_{m_{1},\ldots,m_{j/2}\leq Y}\prod_{i=1}^{j/2}\mathcal{P}\big(x;m_{i},2,\sqrt{Y/m_{i}}\,\big)$ is dominant, while all other terms are negligible. In order to do so, we shall need to examine $\mathcal{P}(x;m,\ell,y)$ more closely.\\
First, we claim that for any $x>0$, any integer $m$ and any $y\geq1$, we have
\begin{equation}\label{eq:2.50}
\ell\geq2:\quad\big|\mathcal{P}(x;m,\ell,y)\big|\ll_{\ell}\omega^{\ell}(x)\frac{r^{\ell}_{{\scriptscriptstyle2}}(m)}{m^{\ell/2}}\,,
\end{equation}
To prove \eqref{eq:2.50} let $x>0$, let $m$ be an integer, $y\geq1$, $\ell\geq2$ an integer, and let $e_{{\scriptscriptstyle i}},\ldots,e_{{\scriptscriptstyle\ell}}=\pm1$.  We may assume that $m$ is square-free since otherwise $\mathcal{P}(x;m,\ell,y)$ vanishes. We have
%
%
\begin{equation}\label{eq:2.51}
\begin{split}
\underset{\textit{e}_{1}k_{1}+\cdots+\textit{e}_{\ell}k_{\ell}=0}{\sum_{k_{1},\ldots,k_{\ell}\leq y}}&\prod_{i=1}^{\ell}\frac{r_{{\scriptscriptstyle2}}\big(mk^{2}_{i}\big)}{k_{i}^{2}}\sin{\big(\pi\sqrt{m}k_{i}\omega(x)\big)}\\
&=\pi^{\ell}\omega^{\ell}(x)m^{\ell/2}\underset{\textit{e}_{1}k_{1}+\cdots+\textit{e}_{\ell}k_{\ell}=0}{\sum_{k_{1},\ldots,k_{\ell}\leq y}}\prod_{i=1}^{\ell}\frac{r_{{\scriptscriptstyle2}}\big(mk^{2}_{i}\big)}{k_{i}}\frac{\sin{\big(\pi\sqrt{m}k_{i}\omega(x)\big)}}{\pi\sqrt{m}k_{i}\omega(x)}\,.
\end{split}
\end{equation}
The relation $\textit{e}_{1}k_{1}+\cdots+\textit{e}_{\ell}k_{\ell}=0$ implies that $\textit{e}_{i}=-1$ for some $1\leq i\leq\ell$. Without loss of generality we may assume that $\textit{e}_{\ell}=-1$. Since $m$ is square-free we have $r_{{\scriptscriptstyle2}}(mk^{2})\leq r_{{\scriptscriptstyle2}}(m)d^{2}_{{\scriptscriptstyle2}}(k)$ where $d_{{\scriptscriptstyle2}}(\cdot)$ is the divisor function, and using the boundedness of $\sin{(t)}/t$ we find that
\begin{equation}\label{eq:2.52}
\begin{split}
\bigg|\underset{\textit{e}_{1}k_{1}+\cdots+\textit{e}_{\ell}k_{\ell}=0}{\sum_{k_{1},\ldots,k_{\ell}\leq y}}&\prod_{i=1}^{\ell}\frac{r_{{\scriptscriptstyle2}}\big(mk^{2}_{i}\big)}{k_{i}^{2}}\sin{\big(\pi\sqrt{m}k_{i}\omega(x)\big)}\bigg|\\
&\ll\omega^{\ell}(x)m^{\ell/2}r^{\ell}_{{\scriptscriptstyle2}}(m)\underset{1\leq\sum_{i=1}^{\ell-1}\textit{e}_{i}k_{i}\leq y}{\sum_{k_{1},\ldots,k_{\ell-1}\leq y}}\bigg\{\prod_{i=1}^{\ell-1}\frac{d^{2}_{{\scriptscriptstyle2}}\big(k_{i}\big)}{k_{i}}\bigg\}\frac{d^{2}_{{\scriptscriptstyle2}}\big(\sum_{i=1}^{\ell-1}\textit{e}_{i}k_{i}\big)}{\sum_{i=1}^{\ell-1}\textit{e}_{i}k_{i}}\\
&=\omega^{\ell}(x)m^{\ell/2}r^{\ell}_{{\scriptscriptstyle2}}(m)\sum_{n\leq y}\frac{d^{2}_{{\scriptscriptstyle2}}(n)}{n}\underset{\sum_{i=1}^{\ell-1}\textit{e}_{i}k_{i}=n}{\sum_{k_{1},\ldots,k_{\ell-1}\leq y}}\prod_{i=1}^{\ell-1}\frac{d^{2}_{{\scriptscriptstyle2}}\big(k_{i}\big)}{k_{i}}\,.
\end{split}
\end{equation}
Suppose first that $\ell\geq3$. Fix a positive integer $n$, and suppose that $k_{1},\ldots,k_{\ell-1}\leq y$ are positive integers satisfying $\sum_{i=1}^{\ell-1}\textit{e}_{i}k_{i}=n$. Let $k=\max_{1\leq i\leq \ell-1}k_{i}$, and without loss of generality we may assume that $k=k_{\ell-1}$. The relation $\sum_{i=1}^{\ell-1}\textit{e}_{i}k_{i}=n$ implies that $k_{\ell-1}\gg_{\ell}(n\prod_{i=1}^{\ell-2}k_{i})^{1/\ell}$, and since $d_{{\scriptscriptstyle2}}(b)\ll b^{1/2}$, it follows that $\prod_{i=1}^{\ell-1}d^{2}_{{\scriptscriptstyle2}}(k_{i})/k_{i}\ll_{\ell}n^{-1/2\ell}\prod_{i=1}^{\ell-2}d^{2}_{{\scriptscriptstyle2}}(k_{i})/k^{1+1/2\ell}_{i}$. It then follows from \eqref{eq:2.52} that
\begin{equation}\label{eq:2.53}
\begin{split}
\ell\geq3:\quad\bigg|\underset{\textit{e}_{1}k_{1}+\cdots+\textit{e}_{\ell}k_{\ell}=0}{\sum_{k_{1},\ldots,k_{\ell}\leq y}}\prod_{i=1}^{\ell}\frac{r_{{\scriptscriptstyle2}}\big(mk^{2}_{i}\big)}{k_{i}^{2}}\sin{\big(\pi\sqrt{m}k_{i}\omega(x)\big)}\bigg|&\ll\omega^{\ell}(x)m^{\ell/2}r^{\ell}_{{\scriptscriptstyle2}}(m)\bigg(\,\,\sum_{k\leq y}\frac{d^{2}_{{\scriptscriptstyle2}}(k)}{k^{1+1/2\ell}}\bigg)^{\ell-1}\\
&\ll_{\ell}\omega^{\ell}(x)m^{\ell/2}r^{\ell}_{{\scriptscriptstyle2}}(m)\,.
\end{split}
\end{equation}
In the case of $\ell=2$ it follows immediately from \eqref{eq:2.52} that
\begin{equation}\label{eq:2.54}
\begin{split}
\ell=2:\quad\bigg|\underset{k_{1}-k_{2}=0}{\sum_{k_{1}, k_{2}\leq y}}\prod_{i=1}^{2}\frac{r_{{\scriptscriptstyle2}}\big(mk^{2}_{i}\big)}{k_{i}^{2}}\sin{\big(\pi\sqrt{m}k_{i}\omega(x)\big)}\bigg|&\ll\omega^{2}(x)mr^{2}_{{\scriptscriptstyle2}}(m)\sum_{k\leq y}\frac{d^{4}_{{\scriptscriptstyle2}}(k)}{k^{2}}\\
&\ll\omega^{2}(x)mr^{2}_{{\scriptscriptstyle2}}(m)\,.
\end{split}
\end{equation}
It is now straightforward to deduce the bound \eqref{eq:2.50} from \eqref{eq:2.53} and \eqref{eq:2.54}. Observe that by \eqref{eq:2.50} we have 
\begin{equation}\label{eq:2.55}
\begin{split}
\ell\geq3:\quad\bigg|\sum_{m\leq Y}\mathcal{P}\big(x;m,\ell,\sqrt{Y/m}\,\big)\bigg|&\ll_{\ell}\omega^{\ell}(x)\sum_{m\leq Y}\frac{r^{\ell}_{{\scriptscriptstyle2}}(m)}{m^{\ell/2}}\\
&\ll_{\ell}\omega^{\ell}(x)\,.
\end{split}
\end{equation}
Next, we consider the case $\ell=2$ for which we shall need to evaluate $\sum_{m\leq Y}\mathcal{P}\big(x;m,2,\sqrt{Y/m}\,\big)$ precisely. For $m\leq Y$ we have
\begin{equation}\label{eq:2.56}
\begin{split}
\mathcal{P}\big(x;m,\ell,\sqrt{Y/m}\,\big)&=\frac{\mu^{2}(m)}{m^{2}}\sum_{\textit{e}_{1},\textit{e}_{2}=\pm1}\bigg\{\prod_{i=1}^{2}e_{{\scriptscriptstyle i}}\bigg\}\underset{\textit{e}_{1}k_{1}+\textit{e}_{2}k_{2}=0}{\sum_{k_{1},k_{2}\leq\sqrt{Y/m}}}\prod_{i=1}^{2}\frac{r_{{\scriptscriptstyle2}}\big(mk^{2}_{i}\big)}{k_{i}^{2}}\sin{\big(\pi\sqrt{m_{{\scriptscriptstyle i}}}k_{i}\omega(x)\big)}\\
&=-2\mu^{2}(m)\sum_{mk^{2}\leq Y}\frac{r^{2}_{{\scriptscriptstyle2}}\big(mk^{2}\big)}{\big(mk^{2}\big)^{2}}\sin^{2}{\big(\pi\sqrt{mk^{2}}\omega(x)\big)}
\end{split}
\end{equation}
Since every positive integer $n$ can be written uniquely as $n=mk^{2}$ with $m$ square-free, it follows from \eqref{eq:2.56} that
\begin{equation}\label{eq:2.57}
\sum_{m\leq Y}\mathcal{P}\big(x;m,2,\sqrt{Y/m}\,\big)=-2\sum_{n\leq Y}\frac{r^{2}_{{\scriptscriptstyle2}}(n)}{n^{2}}\sin^{2}{\big(\pi\sqrt{n}\omega(x)\big)}
\end{equation}
In order to evaluate the RHS of \eqref{eq:2.57} we shall need to have an estimate for $\sum_{n\leq y}r^{2}_{{\scriptscriptstyle2}}(n)$. For $y\geq1$ we have (see \cite{Ramanujan})
\begin{equation}\label{eq:2.58}
\sum_{n\leq y}r^{2}_{{\scriptscriptstyle2}}(n)=4y\log{y}+O(y)\,.
\end{equation}
%
Let $x>0$ be large. By \eqref{eq:2.58} and partial summation we obtain
\begin{equation}\label{eq:2.59}
\begin{split}
\sum_{n\leq Y}\frac{r^{2}_{{\scriptscriptstyle2}}(n)}{n^{2}}\sin^{2}{\big(\pi\sqrt{n}\omega(x)\big)}&=\sum_{n=1}^{\infty}\frac{r^{2}_{{\scriptscriptstyle2}}(n)}{n^{2}}\sin^{2}{\big(\pi\sqrt{n}\omega(x)\big)}+O(Y^{-1}\log{2Y})\\
&=\sum_{n\leq(1/2\pi\omega(x))^{2}}\frac{r^{2}_{{\scriptscriptstyle2}}(n)}{n^{2}}\sin^{2}{\big(\pi\sqrt{n}\omega(x)\big)}+O\big(\omega^{2}(x)|\log\omega(x)|+Y^{-1}\log{2Y}\big)\,.
\end{split}
\end{equation}
Using the series approximation $\sin{(t)}/t=1+O(t^{2})$ with $0<t\leq1/2$, we have for $n\leq(1/2\pi\omega(x))^{2}$
\begin{equation}\label{eq:2.60}
\begin{split}
\frac{r^{2}_{{\scriptscriptstyle2}}(n)}{n^{2}}\sin^{2}{\big(\pi\sqrt{n}\omega(x)\big)}&=(\pi\omega(x))^{2}\frac{r^{2}_{{\scriptscriptstyle2}}(n)}{n}\bigg(\frac{\sin{\big(\pi\sqrt{n}\omega(x)\big)}}{\pi\sqrt{n}\omega(x)}\bigg)^{2}\\
&=(\pi\omega(x))^{2}\frac{r^{2}_{{\scriptscriptstyle2}}(n)}{n}+O\big(\omega^{4}(x)r^{2}_{{\scriptscriptstyle2}}(n)\big)\,.
\end{split}
\end{equation}
It follows from \eqref{eq:3.44} and partial summation that
\begin{equation}\label{eq:2.61}
\begin{split}
\sum_{n\leq(1/2\pi\omega(x))^{2}}\frac{r^{2}_{{\scriptscriptstyle2}}(n)}{n^{2}}\sin^{2}{\big(\pi\sqrt{n}\omega(x)\big)}&=(\pi\omega(x))^{2}\sum_{n\leq(1/2\pi\omega(x))^{2}}\frac{r^{2}_{{\scriptscriptstyle2}}(n)}{n}+O\big(\omega^{2}(x)|\log\omega(x)|\big)\\
&=(\pi\omega(x))^{2}\Big\{8\big(\log{\omega(x)}\big)^{2}+O\big(|\log{\omega(x)}|\big)\Big\}+O\big(\omega^{2}(x)|\log\omega(x)|\big)\\
&=\big(2^{3/2}\pi\omega(x)\log{\omega(x)}\big)^{2}+O\big(\omega^{2}(x)|\log\omega(x)|\big)
\end{split}
\end{equation}
From \eqref{eq:2.57}, \eqref{eq:2.59} and \eqref{eq:2.61} we find that
\begin{equation}\label{eq:2.62}
\sum_{m\leq Y}\mathcal{P}\big(x;m,2,\sqrt{Y/m}\,\big)=-\big(4\pi\omega(x)\log{\omega(x)}\big)^{2}+O\big(\omega^{2}(x)|\log\omega(x)|+Y^{-1}\log{2Y}\big)\,.
\end{equation}
We are now ready to evaluate the RHS of \eqref{eq:2.48}. First we consider the terms $\sum^{\sharp}_{m_{1},\ldots,m_{s}\leq Y}\prod_{i=1}^{s}\mathcal{P}\big(x;m_{i},\ell_{i},\sqrt{Y/m_{i}}\,\big)$ with $s\leq j/2-1$ (in particular $j\geq4$) where $\ell_{1},\,\ldots\,,\ell_{s}\geq2$ are integers satisfying $\ell_{1}+\cdots+\ell_{s}=j$. Observe that since $s\leq j/2-1$ it must be the case that $\ell_{i}\geq3$ for some $i$. It follows from \eqref{eq:2.55} and \eqref{eq:2.62} that 
%
%
\begin{equation}\label{eq:2.63}
\begin{split}
s\leq j/2-1:\quad\bigg|\sideset{}{^\sharp}\sum_{m_{1},\ldots,m_{s}\leq Y}&\prod_{i=1}^{s}\mathcal{P}\big(x;m_{i},\ell_{i},\sqrt{Y/m_{i}}\,\big)\bigg|\\
&\ll_{j}\omega^{j}(x)(\log{\omega(x)})^{j-4}+Y^{-1}\log{2Y}\,.
\end{split}
\end{equation}
For the main term, appealing to \eqref{eq:2.50} with $\ell=2$ and to \eqref{eq:2.62} , we find that
\begin{equation}\label{eq:2.64}
\begin{split}
\sideset{}{^\sharp}\sum_{m_{1},\ldots,m_{j/2}\leq Y}&\prod_{i=1}^{j/2}\mathcal{P}\big(x;m_{i},2,\sqrt{Y/m_{i}}\,\big)\\
&=\bigg\{\sum_{m\leq Y}\mathcal{P}\big(x;m,2,\sqrt{Y/m}\,\big)\bigg\}^{j/2}+O_{j}\big(\omega^{j}(x)(\log{\omega(x)})^{j-2}+Y^{-1}\log{2Y}\big)\\
&=(-1)^{\frac{j}{2}}2^{2j}\pi^{j}\big(\omega(x)\log{\omega(x)}\big)^{j}+O_{j}\big(\omega^{j}(x)|\log{\omega(x)}|^{j-1}+Y^{-1}\log{2Y}\big)\,.
\end{split}
\end{equation}
Finally, inserting \eqref{eq:2.63} and \eqref{eq:2.64} into the RHS of \eqref{eq:2.48}, multiplying both sides of the equation by $(-1)^{j/2}(\sqrt{2}/\pi)^{j}$ and then integrating over the range $X<x<2X$ with $X>0$ large, we arrive at
\begin{equation}\label{eq:2.65}
\begin{split}
(-1)^{\frac{j}{2}}\bigg(\frac{\sqrt{2}}{\pi}\bigg)^{j}&\sum_{e_{{\scriptscriptstyle1}},\ldots,e_{{\scriptscriptstyle j}}=\pm1}\bigg\{\prod_{i=1}^{j}e_{{\scriptscriptstyle i}}\bigg\}\frac{1}{X}\int\limits_{X}^{2X}\bigg\{\,\,\underset{\sum_{i=1}^{j}e_{{\scriptscriptstyle i}}\sqrt{m_{{\scriptscriptstyle i}}}=0}{\sum_{m_{{\scriptscriptstyle1}},\ldots,m_{{\scriptscriptstyle j}}\,\leq\,Y}}\prod_{i=1}^{j}\frac{r_{{\scriptscriptstyle 2}}(m_{{\scriptscriptstyle i}})}{m_{{\scriptscriptstyle i}}}\sin{\big(\pi\sqrt{m_{{\scriptscriptstyle i}}}\omega(x)\big)}\bigg\}\textit{d}x\\
&=\Bigg\{\frac{2^{2j}j!}{(j/2)!}+O_{j}\bigg(\Big|\log{\Big(\,\underset{X<x<2X}{\max}\omega(x)\Big)}\Big|^{-1}\bigg)\Bigg\}\mathscr{M}_{j}(X;\omega)+O_{j}\big(Y^{-1}\log{2Y}\big)\,.
\end{split}
\end{equation}
This concludes the proof.
\end{proof}
\subsection{Moment estimates}
The following are the main results of $\S2$.
\begin{prop}
Let $\omega\in\mathbf{\Omega}$. Then we have
\begin{equation}\label{eq:2.66}
\begin{split}
&j\equiv1\,(2):\quad\lim\limits_{X\to\infty}\bigg\{\frac{1}{X}\int\limits_{X}^{2X}\widehat{\mathcal{E}}^{j}(x;\omega)\textit{d}x\bigg\}\mathscr{M}^{-j/2}_{2}(X;\omega)=0\\
&j\equiv0\,(2):\quad\lim\limits_{X\to\infty}\bigg\{\frac{1}{X}\int\limits_{X}^{2X}\widehat{\mathcal{E}}^{j}(x;\omega)\textit{d}x\bigg\}\mathscr{M}^{-1}_{j}(X;\omega)=\frac{2^{2j}j!}{(j/2)!}\,.
\end{split}
\end{equation}
\end{prop}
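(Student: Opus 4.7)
The plan is to combine the Vorono\"{i}-type expansion of Proposition $1$ with the auxiliary estimates from Lemmas $1$--$4$ and then pass to the limit using conditions \textbf{(4a)}, \textbf{(4b)}. I would write $\widehat{\mathcal{E}}(x;\omega) = S_{{\scriptscriptstyle Y}}(x;\omega) + R_{{\scriptscriptstyle Y}}(x;\omega)$, where $S_{{\scriptscriptstyle Y}}$ is the partial sum up to $m\leq Y$ of the Vorono\"{i} series in \eqref{eq:2.1}, and $R_{{\scriptscriptstyle Y}}$ collects the tail $Y < m \leq X^{2}$, the sawtooth piece $-2x^{-2}\Xi_{\psi}(x;\omega)$, and the $O_{\epsilon}(X^{-1+\epsilon})$ remainder. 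Lemmas $1$ and $2$, combined with trivial termwise bounds, give $\Vert R_{{\scriptscriptstyle Y}}\Vert_{L^{2}} \ll Y^{-1/8}\log{X} + X^{-1+\epsilon}$ and $\Vert R_{{\scriptscriptstyle Y}}\Vert_{L^{\infty}}, \Vert S_{{\scriptscriptstyle Y}}\Vert_{L^{\infty}} \ll \log{X}$.

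Next, expand $\widehat{\mathcal{E}}^{j} = (S_{{\scriptscriptstyle Y}} + R_{{\scriptscriptstyle Y}})^{j}$ by the binomial theorem. Each cross term is controlled by H\"{o}lder's inequality together with the interpolation $\Vert R_{{\scriptscriptstyle Y}}\Vert_{L^{k}}^{k} \leq \Vert R_{{\scriptscriptstyle Y}}\Vert_{L^{\infty}}^{k-2}\Vert R_{{\scriptscriptstyle Y}}\Vert_{L^{2}}^{2}$ (for $k\geq 2$) and $\Vert R_{{\scriptscriptstyle Y}}\Vert_{L^{1}}\leq\Vert R_{{\scriptscriptstyle Y}}\Vert_{L^{2}}$, yielding
$$\bigg|\frac{1}{X}\int\limits_{X}^{2X}\widehat{\mathcal{E}}^{j}(x;\omega)\,\textit{d}x - \frac{1}{X}\int\limits_{X}^{2X}S^{j}_{{\scriptscriptstyle Y}}(x;\omega)\,\textit{d}x\bigg| \ll_{j} Y^{-1/8}(\log{X})^{j}.$$
To compute $\frac{1}{X}\int_{X}^{2X}S^{j}_{{\scriptscriptstyle Y}}\,\textit{d}x$, I would write each factor $\sin(\pi\sqrt{m_{i}}(2x+\omega(x)))$ as a sum of two complex exponentials and split the resulting sum over $(m_{1},\ldots,m_{j})\in[1,Y]^{j}$ and $(e_{1},\ldots,e_{j})\in\{\pm1\}^{j}$ into a diagonal part ($\sum_{i}e_{i}\sqrt{m_{i}}=0$) and an off-diagonal part.

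The off-diagonal part is handled directly by Lemma $3$. For the diagonal, the crucial observation is that for odd $j$ the associated trigonometric factor is of sine type and degenerates to $\sin(0)=0$, so the diagonal vanishes identically; whereas for even $j$ it is of cosine type and coincides precisely with the quantity evaluated in Lemma $4$, producing $\frac{2^{2j}j!}{(j/2)!}\mathscr{M}_{j}(X;\omega)$ plus a relative error of order $|\log\max_{[X,2X]}\omega(x)|^{-1}$ and an absolute error $O_{j}(Y^{-1}\log{2Y})$. Setting $Y=X^{\beta}$ for a sufficiently small $\beta=\beta(j,\xi_{\omega})>0$, both the off-diagonal estimate of Lemma $3$ and the $Y^{-1/8}(\log{X})^{j}$ truncation error become $\ll_{j} X^{-\eta}(\log{X})^{j}$ for some $\eta>0$. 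Condition \textbf{(4a)} supplies $\mathscr{M}_{2}(X;\omega)\gg X^{-\alpha}$ for arbitrarily small $\alpha>0$, so dividing by $\mathscr{M}^{j/2}_{2}(X;\omega)$ (odd case) or by $\mathscr{M}_{j}(X;\omega)$ (even case, where $\mathscr{M}_{j}\asymp\mathscr{M}^{j/2}_{2}$ by \textbf{(4b)} and $\mathscr{L}_{j}\geq 1$) drives every error to zero, while the even-$j$ diagonal produces the stated constant $2^{2j}j!/(j/2)!$.

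The main technical obstacle is the calibration of the truncation parameter $Y$: the off-diagonal bound of Lemma $3$ grows like $Y^{2^{j-2}}$, forcing $\beta$ to be chosen exponentially small in $j$, whereas the $L^{2}$ tail estimate of Lemma $2$ improves only polynomially, as $Y^{-1/4}$. The entire argument is viable only because condition \textbf{(4a)} offers arbitrary polynomial-loss headroom in $\mathscr{M}_{2}$ to absorb the $X^{j\alpha/2}$ factors incurred by taking $\beta$ so small; the regularity conditions \textbf{(3a)}, \textbf{(3b)} are already built into Lemmas $2$ and $3$, while \textbf{(4b)} is used only at the very last step to identify the limiting constant through the asymptotic equivalence $\mathscr{M}_{j}\asymp\mathscr{M}_{2}^{j/2}$ for even $j$.
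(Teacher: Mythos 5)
Your proposal is correct and follows essentially the same route as the paper's proof: the Vorono\"{i} expansion of Proposition $1$, removal of the sawtooth term via Lemma $1$, truncation at $Y$ via Lemma $2$, the diagonal/off-diagonal split with the parity observation (sine type vanishing on the diagonal for odd $j$, cosine type feeding Lemma $4$ for even $j$), a power-of-$X$ choice of $Y$ calibrated against the $Y^{2^{j-2}}$ growth in Lemma $3$, and finally condition \textbf{(4a)} together with H\"{o}lder (equivalently \textbf{(4b)}) to absorb the errors after normalization. The only difference is organizational — you bundle the tail, the sawtooth piece and the $O_{\epsilon}(X^{-1+\epsilon})$ remainder into a single $R_{Y}$ handled by a binomial expansion with $L^{2}$--$L^{\infty}$ interpolation, whereas the paper peels these off sequentially with Cauchy--Schwarz — which yields the same error bound.
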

\begin{proof}
Fix a positive integer $j$. Let $X>0$ be large, and let $X<x<2X$. Since $x^{-2}|\Xi_{\psi}(x;\omega)|\ll1$, it follows from Proposition $1$ that
\begin{equation}\label{eq:2.67}
\begin{split}
\widehat{\mathcal{E}}^{j}(x;\omega)&=\Bigg\{\frac{2^{3/2}}{\pi}\sum_{m\,\leq\,X^{2}}\frac{r_{{\scriptscriptstyle 2}}(m)}{m}\sin{\big(\pi\sqrt{m}\omega(x)\big)}\sin{\big(\pi\sqrt{m}(2x+\omega(x))\big)}\Bigg\}^{j}\\
&\,\,\,\,\,+O_{j,\epsilon}\Big((\log{X})^{j-1}X^{-2}|\Xi_{\psi}(x;\omega)|+X^{-1+\epsilon}\Big)\,.
\end{split}
\end{equation}
Multiplying both sides of \eqref{eq:2.67} by $1/X$, integrating over the prescribed range and using Lemma $1$ together with Cauchy–Schwarz inequalit, we find that 
\begin{equation}\label{eq:2.68}
\begin{split}
\frac{1}{X}\int\limits_{X}^{2X}\widehat{\mathcal{E}}^{j}(x;\omega)\textit{d}x&=\frac{1}{X}\int\limits_{X}^{2X}\bigg\{\frac{2^{3/2}}{\pi}\sum_{m\,\leq\,X^{2}}\frac{r_{{\scriptscriptstyle 2}}(m)}{m}\sin{\big(\pi\sqrt{m}\omega(x)\big)}\sin{\big(\pi\sqrt{m}(2x+\omega(x))\big)}\bigg\}^{j}\textit{d}x\\
&\,\,\,\,\,+O_{j,\epsilon}(X^{-1+\epsilon})\,.
\end{split}
\end{equation}
Now, let $Y=Y_{{\scriptscriptstyle j,\omega}}\geq1$ be a parameter that depends both on $j$ and $\omega$ that satisfies $Y<X^{2}$, and is to be determined later on. We split the sum over $m$ under the integral sign appearing on the RHS of \eqref{eq:2.68} into two ranges $m\leq Y$ and $Y<m\leq X^{2}$, obtaining
\begin{equation}\label{eq:2.69}
\begin{split}
\frac{1}{X}\int\limits_{X}^{2X}&\bigg\{\frac{2^{3/2}}{\pi}\sum_{m\,\leq\,X^{2}}\frac{r_{{\scriptscriptstyle 2}}(m)}{m}\sin{\big(\pi\sqrt{m}\omega(x)\big)}\sin{\big(\pi\sqrt{m}(2x+\omega(x))\big)}\bigg\}^{j}\textit{d}x\\
&=\frac{1}{X}\int\limits_{X}^{2X}\bigg\{\frac{2^{3/2}}{\pi}\sum_{m\,\leq\,Y}\frac{r_{{\scriptscriptstyle 2}}(m)}{m}\sin{\big(\pi\sqrt{m}\omega(x)\big)}\sin{\big(\pi\sqrt{m}(2x+\omega(x))\big)}\bigg\}^{j}\textit{d}x+E_{j,Y}(X)\,,
\end{split}
\end{equation}
where by Lemma $2$ and Cauchy–Schwarz inequality, we find that $E_{j,Y}(X)$ satisfies the bound
\begin{equation}\label{eq:2.70}
\begin{split}
|E_{j,Y}(X)|&\ll_{j}(\log{X})^{j-1}\Bigg\{\frac{1}{X}\int\limits_{X}^{2X}\bigg|\sum_{Y<m\,\leq\,X^{2}}\frac{r_{{\scriptscriptstyle 2}}(m)}{m}\sin{\big(\pi\sqrt{m}\omega(x)\big)}\sin{\big(\pi\sqrt{m}(2x+\omega(x))\big)}\bigg|^{2}\textit{d}x\Bigg\}^{1/2}\\
&\ll Y^{-1/8}(\log{X})^{j}\,.
\end{split}
\end{equation}
It follows from \eqref{eq:2.68}, \eqref{eq:2.69} and \eqref{eq:2.70} that
\begin{equation}\label{eq:2.71}
\begin{split}
\frac{1}{X}\int\limits_{X}^{2X}\widehat{\mathcal{E}}^{j}(x;\omega)\textit{d}x&=\frac{1}{X}\int\limits_{X}^{2X}\bigg\{\frac{2^{3/2}}{\pi}\sum_{m\,\leq\,Y}\frac{r_{{\scriptscriptstyle 2}}(m)}{m}\sin{\big(\pi\sqrt{m}\omega(x)\big)}\sin{\big(\pi\sqrt{m}(2x+\omega(x))\big)}\bigg\}^{j}\textit{d}x\\
&\,\,\,\,\,+O\big(Y^{-1/8}(\log{X})^{j}\big)\,.
\end{split}
\end{equation}
We proceed to estimate the integral appearing on the RHS of \eqref{eq:2.71}, and we do so in accordance with the parity of $j$.\\
\textbf{\textnormal{\textbf{case (i):}}} $j\equiv1\,(2)$. Expanding out the product of the $\sin{}$ functions in which $\omega$ appears in the argument, we have the following decomposition 
\begin{equation}\label{eq:2.72}
\begin{split}
\frac{1}{X}\int\limits_{X}^{2X}\bigg\{\frac{2^{3/2}}{\pi}&\sum_{m\,\leq\,Y}\frac{r_{{\scriptscriptstyle 2}}(m)}{m}\sin{\big(\pi\sqrt{m}\omega(x)\big)}\sin{\big(\pi\sqrt{m}(2x+\omega(x))\big)}\bigg\}^{j}\textit{d}x\\
&=(-1)^{\big[\frac{j}{2}\big]}\bigg(\frac{\sqrt{2}}{\pi}\bigg)^{j}\sum_{e_{{\scriptscriptstyle1}},\ldots,e_{{\scriptscriptstyle j}}=\pm1}\bigg\{\prod_{i=1}^{j}e_{{\scriptscriptstyle i}}\bigg\}\text{\Large{W}}^{\text{(odd)},\neq}_{(e_{{\scriptscriptstyle1}},\ldots,e_{{\scriptscriptstyle j}})\,}(Y,X)\,,
\end{split}
\end{equation}
where for $e_{{\scriptscriptstyle1}},\ldots,e_{{\scriptscriptstyle j}}=\pm1$, we define
\begin{equation}\label{eq:2.73}
\begin{split}
\text{\Large{W}}^{\text{(odd)},\neq}_{(e_{{\scriptscriptstyle1}},\ldots,e_{{\scriptscriptstyle j}})\,}(Y,X)&=\frac{1}{X}\int\limits_{X}^{2X}\sum_{m_{{\scriptscriptstyle1}},\ldots,m_{{\scriptscriptstyle j}}\,\leq\,Y}\bigg\{\prod_{i=1}^{j}\frac{r_{{\scriptscriptstyle 2}}(m_{{\scriptscriptstyle i}})}{m_{{\scriptscriptstyle i}}}\sin{\big(\pi\sqrt{m_{{\scriptscriptstyle i}}}\omega(x)\big)}\bigg\}\sin{\bigg(\pi\Big(\sum_{i=1}^{j}e_{{\scriptscriptstyle i}}\sqrt{m_{{\scriptscriptstyle i}}}\Big)(2x+\omega(x))\bigg)}\textit{d}x\\
&=\frac{1}{X}\int\limits_{X}^{2X}\underset{\sum_{i=1}^{j}e_{{\scriptscriptstyle i}}\sqrt{m_{{\scriptscriptstyle i}}}\neq0}{\sum_{m_{{\scriptscriptstyle1}},\ldots,m_{{\scriptscriptstyle j}}\,\leq\,Y}}\bigg\{\prod_{i=1}^{j}\frac{r_{{\scriptscriptstyle 2}}(m_{{\scriptscriptstyle i}})}{m_{{\scriptscriptstyle i}}}\sin{\big(\pi\sqrt{m_{{\scriptscriptstyle i}}}\omega(x)\big)}\bigg\}\sin{\bigg(\pi\Big(\sum_{i=1}^{j}e_{{\scriptscriptstyle i}}\sqrt{m_{{\scriptscriptstyle i}}}\Big)(2x+\omega(x))\bigg)}\textit{d}x\,.
\end{split}
\end{equation}
We may bound each of the summands $\text{\Large{W}}^{\text{(odd)},\neq}_{(e_{{\scriptscriptstyle1}},\ldots,e_{{\scriptscriptstyle j}})\,}(Y,X)$ by appealing to Lemma $3$, obtaining
%
\begin{equation}\label{eq:2.74}
\begin{split}
\Bigg|\frac{1}{X}\int\limits_{X}^{2X}\bigg\{\frac{2^{3/2}}{\pi}&\sum_{m\,\leq\,Y}\frac{r_{{\scriptscriptstyle 2}}(m)}{m}\sin{\big(\pi\sqrt{m}\omega(x)\big)}\sin{\big(\pi\sqrt{m}(2x+\omega(x))\big)}\bigg\}^{j}\textit{d}x\Bigg|\\
&\ll_{j}\sum_{e_{{\scriptscriptstyle1}},\ldots,e_{{\scriptscriptstyle j}}=\pm1}\big|\text{\Large{W}}^{\text{(odd)},\neq}_{(e_{{\scriptscriptstyle1}},\ldots,e_{{\scriptscriptstyle j}})\,}(Y,X)\big|\\
&\ll_{j}\left\{
        \begin{array}{ll}
            X^{-1/2}\log{2Y}+X^{-\xi_{{\scriptscriptstyle\omega}}}& ;\,j=1
            \\\\
            \big(X^{-1/2}Y^{1/2}+X^{-\xi_{{\scriptscriptstyle\omega}}}\big)Y^{2^{j-2}-3/2}(\log{2Y})^{j-1}& ;\,j>1\text{ and }j\equiv1\,(2)\,.
        \end{array}
    \right.
\end{split}
\end{equation}
Inserting these bounds into the RHS of \eqref{eq:2.71} we find that
\begin{equation}\label{eq:2.75}
\begin{split}
\Bigg|\frac{1}{X}\int\limits_{X}^{2X}\widehat{\mathcal{E}}^{j}(x;\omega)\textit{d}x\Bigg|&\ll_{j} Y^{-1/8}(\log{X})^{j}\\
&\,\,\,\,\,+\left\{
        \begin{array}{ll}
            X^{-1/2}\log{2Y}+X^{-\xi_{{\scriptscriptstyle\omega}}}& ;\,j=1
            \\\\
            \big(X^{-1/2}Y^{1/2}+X^{-\xi_{{\scriptscriptstyle\omega}}}\big)Y^{2^{j-2}-3/2}(\log{2Y})^{j-1}& ;\,j>1\text{ and }j\equiv1\,(2)\,.
        \end{array}
    \right.
\end{split}
\end{equation}
Our aim is to choose a suitable $Y$ so that the RHS of \eqref{eq:2.75} decays like smoe power of $X$. This is easily achieved by choosing $Y$ to be a sufficiently small power of $X$. We set $\theta=\min\big\{\eta, \xi_{{\scriptscriptstyle\omega}}, \frac{1}{2}\big\}$, and choose
\begin{equation}\label{eq:2.76}
\begin{split}
Y=\left\{
        \begin{array}{ll}
            X^{\theta}& ;\,j=1
            \\\\
            X^{\theta/2^{3}(2^{j-2}-3/2)}& ;\,j>1\text{ and }j\equiv1\,(2)\,.
        \end{array}
    \right.
\end{split}
\end{equation}
With this choice we find that
%
\begin{equation}\label{eq:2.77}
j\equiv1\,(2):\quad\Bigg|\frac{1}{X}\int\limits_{X}^{2X}\widehat{\mathcal{E}}^{j}(x;\omega)\textit{d}x\Bigg|\ll_{j}X^{-\theta/16}\,.
\end{equation}
Now, according to condition \textbf{(4\textnormal{\textbf{a}})} we have $\tau(\omega)=\inf\{\alpha>0: \mathscr{M}_{2}(X;\omega)\gg X^{-\alpha}\}=0$. In particular, $\mathscr{M}_{2}(X;\omega)\gg X^{-\theta/16j}$. Multiplying \eqref{eq:2.77} by $\mathscr{M}^{-j/2}_{2}(X;\omega)$ we arrive at
\begin{equation}\label{eq:2.78}
j\equiv1\,(2):\quad\Bigg|\frac{1}{X}\int\limits_{X}^{2X}\widehat{\mathcal{E}}^{j}(x;\omega)\textit{d}x\Bigg|\mathscr{M}^{-j/2}_{2}(X;\omega)\ll_{j}X^{-\theta/32}\,.
\end{equation}
Letting $X\to\infty$ in \eqref{eq:2.78} we obtain the proof of \eqref{eq:2.88} for $j\equiv1\,(2)$.\\
\textbf{\textnormal{\textbf{case (ii):}}} $j\equiv0\,(2)$. We may claerly assume that $j\geq2$. Expanding out the product of the $\sin{}$ functions in which $\omega$ appears in the argument, we have the following decomposition
\begin{equation}\label{eq:2.79}
\begin{split}
\begin{split}
\frac{1}{X}\int\limits_{X}^{2X}\bigg\{\frac{2^{3/2}}{\pi}&\sum_{m\,\leq\,Y}\frac{r_{{\scriptscriptstyle 2}}(m)}{m}\sin{\big(\pi\sqrt{m}\omega(x)\big)}\sin{\big(\pi\sqrt{m}(2x+\omega(x))\big)}\bigg\}^{j}\textit{d}x\\
&=(-1)^{\frac{j}{2}}\bigg(\frac{\sqrt{2}}{\pi}\bigg)^{j}\sum_{e_{{\scriptscriptstyle1}},\ldots,e_{{\scriptscriptstyle j}}=\pm1}\bigg\{\prod_{i=1}^{j}e_{{\scriptscriptstyle i}}\bigg\}\bigg\{\text{\Large{W}}^{\text{(even)},=}_{(e_{{\scriptscriptstyle1}},\ldots,e_{{\scriptscriptstyle j}})\,}(Y,X)+\text{\Large{W}}^{\text{(even)},\neq}_{(e_{{\scriptscriptstyle1}},\ldots,e_{{\scriptscriptstyle j}})\,}(Y,X)\bigg\}\,,
\end{split}
\end{split}
\end{equation}
where for $e_{{\scriptscriptstyle1}},\ldots,e_{{\scriptscriptstyle j}}=\pm1$, we define
\begin{equation}\label{eq:2.80}
\begin{split}
\text{\Large{W}}^{\text{(even)},=}_{(e_{{\scriptscriptstyle1}},\ldots,e_{{\scriptscriptstyle j}})\,}(Y,X)&=\frac{1}{X}\int\limits_{X}^{2X}\underset{\sum_{i=1}^{j}e_{{\scriptscriptstyle i}}\sqrt{m_{{\scriptscriptstyle i}}}=0}{\sum_{m_{{\scriptscriptstyle1}},\ldots,m_{{\scriptscriptstyle j}}\,\leq\,Y}}\bigg\{\prod_{i=1}^{j}\frac{r_{{\scriptscriptstyle 2}}(m_{{\scriptscriptstyle i}})}{m_{{\scriptscriptstyle i}}}\sin{\big(\pi\sqrt{m_{{\scriptscriptstyle i}}}\omega(x)\big)}\bigg\}\cos{\bigg(\pi\Big(\sum_{i=1}^{j}e_{{\scriptscriptstyle i}}\sqrt{m_{{\scriptscriptstyle i}}}\Big)(2x+\omega(x))\bigg)}\textit{d}x\\
&=\frac{1}{X}\int\limits_{X}^{2X}\bigg\{\,\,\underset{\sum_{i=1}^{j}e_{{\scriptscriptstyle i}}\sqrt{m_{{\scriptscriptstyle i}}}=0}{\sum_{m_{{\scriptscriptstyle1}},\ldots,m_{{\scriptscriptstyle j}}\,\leq\,Y}}\prod_{i=1}^{j}\frac{r_{{\scriptscriptstyle 2}}(m_{{\scriptscriptstyle i}})}{m_{{\scriptscriptstyle i}}}\sin{\big(\pi\sqrt{m_{{\scriptscriptstyle i}}}\omega(x)\big)}\bigg\}\textit{d}x\,,
\end{split}
\end{equation}
and 
\begin{equation}\label{eq:2.81}
\begin{split}
\text{\Large{W}}^{\text{(even)},\neq}_{(e_{{\scriptscriptstyle1}},\ldots,e_{{\scriptscriptstyle j}})\,}(Y,X)&=\frac{1}{X}\int\limits_{X}^{2X}\underset{\sum_{i=1}^{j}e_{{\scriptscriptstyle i}}\sqrt{m_{{\scriptscriptstyle i}}}\neq0}{\sum_{m_{{\scriptscriptstyle1}},\ldots,m_{{\scriptscriptstyle j}}\,\leq\,Y}}\bigg\{\prod_{i=1}^{j}\frac{r_{{\scriptscriptstyle 2}}(m_{{\scriptscriptstyle i}})}{m_{{\scriptscriptstyle i}}}\sin{\big(\pi\sqrt{m_{{\scriptscriptstyle i}}}\omega(x)\big)}\bigg\}\cos{\bigg(\pi\Big(\sum_{i=1}^{j}e_{{\scriptscriptstyle i}}\sqrt{m_{{\scriptscriptstyle i}}}\Big)(2x+\omega(x))\bigg)}\textit{d}x\,.
\end{split}
\end{equation}
We bound each summand $\text{\Large{W}}^{\text{(even)},\neq}_{(e_{{\scriptscriptstyle1}},\ldots,e_{{\scriptscriptstyle j}})\,}(Y,X)$ using Lemma $3$, obtaining
\begin{equation}\label{eq:2.82}
\big|\text{\Large{W}}^{\text{(even)},\neq}_{(e_{{\scriptscriptstyle1}},\ldots,e_{{\scriptscriptstyle j}})\,}(Y,X)\big|\ll_{j}\big(X^{-1/2}Y^{1/2}+X^{-\xi_{{\scriptscriptstyle\omega}}}\big)Y^{2^{j-2}-3/2}(\log{2Y})^{j-1}\,.
\end{equation}
According to Lemma $4$ we have
\begin{equation}\label{eq:2.83}
\begin{split}
\begin{split}
(-1)^{\frac{j}{2}}\bigg(\frac{\sqrt{2}}{\pi}\bigg)^{j}&\sum_{e_{{\scriptscriptstyle1}},\ldots,e_{{\scriptscriptstyle j}}=\pm1}\bigg\{\prod_{i=1}^{j}e_{{\scriptscriptstyle i}}\bigg\}\bigg\{\text{\Large{W}}^{\text{(even)},=}_{(e_{{\scriptscriptstyle1}},\ldots,e_{{\scriptscriptstyle j}})\,}(Y,X)+\text{\Large{W}}^{\text{(even)},\neq}_{(e_{{\scriptscriptstyle1}},\ldots,e_{{\scriptscriptstyle j}})\,}(Y,X)\bigg\}\\
&=\Bigg\{\frac{2^{2j}j!}{(j/2)!}+O_{j}\bigg(\Big|\log{\Big(\,\underset{X<x<2X}{\max}\omega(x)\Big)}\Big|^{-1}\bigg)\Bigg\}\mathscr{M}_{j}(X;\omega)+O_{j}\big(Y^{-1}\log{2Y}\big)\,.
\end{split}
\end{split}
\end{equation}
It follows from \eqref{eq:2.71}, \eqref{eq:2.79}, \eqref{eq:2.82} and \eqref{eq:2.83} that
\begin{equation}\label{eq:2.84}
\begin{split}
\frac{1}{X}\int\limits_{X}^{2X}\widehat{\mathcal{E}}^{j}(x;\omega)\textit{d}x&=\Bigg\{\frac{2^{2j}j!}{(j/2)!}+O_{j}\bigg(\Big|\log{\Big(\,\underset{X<x<2X}{\max}\omega(x)\Big)}\Big|^{-1}\bigg)\Bigg\}\mathscr{M}_{j}(X;\omega)\\
&\,\,\,\,\,+O\Big(Y^{-1/8}(\log{X})^{j}+\big(X^{-1/2}Y^{1/2}+X^{-\xi_{{\scriptscriptstyle\omega}}}\big)Y^{2^{j-2}-3/2}(\log{2Y})^{j-1}\Big)\,.
\end{split}
\end{equation}
We are now ready to make a choice for $Y$. We set $\theta=\min\big\{\eta, \xi_{{\scriptscriptstyle\omega}}, \frac{1}{2}\big\}$, and choose
\begin{equation}\label{eq:2.85}
\begin{split}
Y=\left\{
        \begin{array}{ll}
            X^{\theta}& ;\,j=2
            \\\\
            X^{\theta/2^{3}(2^{j-2}-3/2)}& ;\,j>2\text{ and }j\equiv0\,(2)\,.
        \end{array}
    \right.
\end{split}
\end{equation}
With this choice we find that the error term on the RHS of \eqref{eq:2.84} satisfies the bound
\begin{equation}\label{eq:2.86}
Y^{-1/8}(\log{X})^{j}+\big(X^{-1/2}Y^{1/2}+X^{-\xi_{{\scriptscriptstyle\omega}}}\big)Y^{2^{j-2}-3/2}(\log{2Y})^{j-1}\ll_{j}X^{-\theta/16}\,.
\end{equation}
Now, it follows from H$\ddot{\text{o}}$lder's inequality that $\mathscr{M}_{j}(X;\omega)\geq\mathscr{M}^{j/2}_{2}(X;\omega)$, and according to assumption \textbf{(4\textnormal{\textbf{a}})} we have $\tau(\omega)=\inf\{\alpha>0: \mathscr{M}_{2}(X;\omega)\gg X^{-\alpha}\}=0$. In particular, $\mathscr{M}_{j}(X;\omega)\gg X^{-\theta/32}$. Dividing \eqref{eq:2.84} by $\mathscr{M}_{j}(X;\omega)$ throughout, it then follows from \eqref{eq:2.86} that
\begin{equation}\label{eq:2.87}
\begin{split}
j\equiv0\,(2):\quad\bigg\{\frac{1}{X}\int\limits_{X}^{2X}\widehat{\mathcal{E}}^{j}(x;\omega)\textit{d}x\bigg\}\mathscr{M}^{-1}_{j}(X;\omega)&=\frac{2^{2j}j!}{(j/2)!}+O_{j}\bigg(X^{-\theta/16}+\Big|\log{\Big(\,\underset{X<x<2X}{\max}\omega(x)\Big)}\Big|^{-1}\bigg)\,.
\end{split}
\end{equation}
According to the assumptions on $\omega$, the error term on the RHS of \eqref{eq:2.87} tends to zero as $X\to\infty$, and therefore \eqref{eq:2.66} is proved in the case $j\equiv0\,(2)$. This concludes the proof Proposition $2$.
\end{proof}
\noindent
As an immediate corollary we obtain the following.
\begin{prop}
Let $\omega\in\mathbf{\Omega}$, and recall that the variance of $\widehat{\mathcal{E}}(x;\omega)$ is defined to be $\sigma^{2}(X;\omega)=X^{-1}\int_{X}^{2X}\widehat{\mathcal{E}}^{2}(x;\omega)\textit{d}x$. Then we have
\begin{equation}\label{eq:2.88}
\begin{split}
j\equiv1\,(2)\quad:&\lim\limits_{X\to\infty}\frac{1}{X}\int\limits_{X}^{2X}\bigg(\frac{\widehat{\mathcal{E}}(x;\omega)}{\sigma(X;\omega)}\bigg)^{j}\textit{d}x=0\\
j\equiv0\,(2)\quad:&\lim\limits_{X\to\infty}\frac{1}{X}\int\limits_{X}^{2X}\bigg(\frac{\widehat{\mathcal{E}}(x;\omega)}{\sigma(X;\omega)}\bigg)^{j}\textit{d}x=\frac{j!}{2^{j/2}(j/2)!}\mathscr{L}_{j}(\omega)\,.
\end{split}
\end{equation}
\begin{proof}
For $j\equiv1\,(2)$ we have
\begin{equation}\label{eq:2.89}
\begin{split}
\frac{1}{X}\int\limits_{X}^{2X}\bigg(\frac{\widehat{\mathcal{E}}(x;\omega)}{\sigma(X;\omega)}\bigg)^{j}\textit{d}x=\bigg\{\frac{1}{X}\int\limits_{X}^{2X}\widehat{\mathcal{E}}^{j}(x;\omega)\textit{d}x\bigg\}\mathscr{M}^{-j/2}_{2}(X;\omega)\Bigg(\bigg\{\frac{1}{X}\int\limits_{X}^{2X}\widehat{\mathcal{E}}^{2}(x;\omega)\textit{d}x\bigg\}\mathscr{M}^{-1}_{2}(X;\omega)\Bigg)^{-j/2}\,,
\end{split}
\end{equation}
while for $j\equiv0\,(2)$ we have 
\begin{equation}\label{eq:2.90}
\begin{split}
\frac{1}{X}\int\limits_{X}^{2X}\bigg(&\frac{\widehat{\mathcal{E}}(x;\omega)}{\sigma(X;\omega)}\bigg)^{j}\textit{d}x\\
&=\bigg\{\frac{1}{X}\int\limits_{X}^{2X}\widehat{\mathcal{E}}^{j}(x;\omega)\textit{d}x\bigg\}\mathscr{M}^{-1}_{j}(X;\omega)\Bigg(\bigg\{\frac{1}{X}\int\limits_{X}^{2X}\widehat{\mathcal{E}}^{2}(x;\omega)\textit{d}x\bigg\}\mathscr{M}^{-1}_{2}(X;\omega)\Bigg)^{-j/2}\frac{\mathscr{M}_{j}(X;\omega)}{\mathscr{M}^{j/2}_{2}(X;\omega)}\,.
\end{split}
\end{equation}
Letting $X\to\infty$, we obtain \eqref{eq:2.88} from \eqref{eq:2.66} (together with conditions \textbf{(4\textnormal{\textbf{b}})} in the case where $j\equiv0\,(2)$). This concludes the proof.
\end{proof}
\end{prop}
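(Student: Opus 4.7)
The plan is to deduce Proposition $3$ directly from Proposition $2$ together with condition \textbf{(4b)} in the definition of $\mathbf{\Omega}$. Since $\sigma^{2}(X;\omega)=X^{-1}\int_{X}^{2X}\widehat{\mathcal{E}}^{2}(x;\omega)\textit{d}x$, for any positive integer $j$ one has
\begin{equation*}
\frac{1}{X}\int\limits_{X}^{2X}\bigg(\frac{\widehat{\mathcal{E}}(x;\omega)}{\sigma(X;\omega)}\bigg)^{j}\textit{d}x=\frac{X^{-1}\int_{X}^{2X}\widehat{\mathcal{E}}^{j}(x;\omega)\textit{d}x}{\Big(X^{-1}\int_{X}^{2X}\widehat{\mathcal{E}}^{2}(x;\omega)\textit{d}x\Big)^{j/2}},
\end{equation*}
so everything reduces to controlling the numerator and denominator against appropriate powers of $\mathscr{M}_{2}(X;\omega)$ and $\mathscr{M}_{j}(X;\omega)$.

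For $j\equiv 0\,(2)$, I would multiply and divide by $\mathscr{M}_{j}(X;\omega)$ on top and by $\mathscr{M}_{2}^{j/2}(X;\omega)$ on the bottom, which rewrites the left hand side as the product
\begin{equation*}
\bigg\{\frac{X^{-1}\int\widehat{\mathcal{E}}^{j}\textit{d}x}{\mathscr{M}_{j}(X;\omega)}\bigg\}\bigg\{\frac{X^{-1}\int\widehat{\mathcal{E}}^{2}\textit{d}x}{\mathscr{M}_{2}(X;\omega)}\bigg\}^{-j/2}\frac{\mathscr{M}_{j}(X;\omega)}{\mathscr{M}_{2}^{j/2}(X;\omega)}.
\end{equation*}
The even-index case of Proposition $2$ sends the first brace to $2^{2j}j!/(j/2)!$ and the bracketed quantity in the middle (at index $2$) to $2^{4}\cdot 2!/1!=2^{5}$. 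The product of these first two factors is therefore $2^{2j-5j/2}j!/(j/2)!=j!/(2^{j/2}(j/2)!)$. By condition \textbf{(4b)} the third factor converges to $\mathscr{L}_{j}(\omega)$, which yields the claimed value.

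For $j\equiv 1\,(2)$, the same identity with only $\mathscr{M}_{2}^{j/2}(X;\omega)$ inserted on top and on the bottom gives
\begin{equation*}
\bigg\{\frac{X^{-1}\int\widehat{\mathcal{E}}^{j}\textit{d}x}{\mathscr{M}_{2}^{j/2}(X;\omega)}\bigg\}\bigg\{\frac{X^{-1}\int\widehat{\mathcal{E}}^{2}\textit{d}x}{\mathscr{M}_{2}(X;\omega)}\bigg\}^{-j/2}.
\end{equation*}
The odd-index case of Proposition $2$ sends the first brace to $0$, while as above the second factor tends to the nonzero constant $2^{-5j/2}$, so the limit is $0$.

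There is essentially no obstacle beyond bookkeeping; the only subtle point is the legitimacy of dividing by $\sigma^{j}$ in the limit, which requires that $\sigma^{2}(X;\omega)/\mathscr{M}_{2}(X;\omega)$ converge to a nonzero constant. This, however, is precisely the $j=2$ case of Proposition $2$, combined with the positivity of $\mathscr{M}_{2}(X;\omega)$ guaranteed by condition \textbf{(1)} ($\omega>0$ so that $(\omega\log\omega)^{2}\geq 0$ is not identically zero on $[X,2X]$ for large $X$).
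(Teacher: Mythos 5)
Your proposal is correct and follows essentially the same route as the paper: rewrite the normalized moment as a ratio, insert the factors $\mathscr{M}_{j}(X;\omega)$ and $\mathscr{M}_{2}^{j/2}(X;\omega)$, and pass to the limit using Proposition $2$ (whose $j=2$ case gives the nonzero constant $2^{5}$ for $\sigma^{2}(X;\omega)/\mathscr{M}_{2}(X;\omega)$) together with condition \textbf{(4b)} for the even case. The arithmetic $2^{2j}\cdot 2^{-5j/2}=2^{-j/2}$ checks out, so nothing further is needed.
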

\section{Proof of the main results and corollaries}
\noindent
Before presenting the proofs of the main theorem and corollaries, we need to establish two lemmas.
\begin{lemma}
Let $\omega\in\mathbf{\Omega}$. Then there exists a unique (Radon) probability measure $\mu_{{\scriptscriptstyle\omega}}$ on the real line that satisfies
\begin{equation}\label{eq:5.1}
\int\limits_{-\infty}^{\infty}\alpha^{j}\textit{d}\mu_{{\scriptscriptstyle\omega}}(\alpha)=\left\{
        \begin{array}{ll}
           \frac{j!}{2^{j/2}(j/2)!}\mathscr{L}_{j}(\omega) & ;\,j\equiv0\,(2)
            \\\\
           0 & ;\, j\equiv1\,(2)\,.
        \end{array}
    \right.
\end{equation}
\end{lemma}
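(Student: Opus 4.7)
The plan is to combine Hamburger's theorem on the existence side of the moment problem with Carleman's criterion on the uniqueness side. Define the target sequence
\[
\mathfrak{m}_{\omega,j}=\begin{cases}\dfrac{j!}{2^{j/2}(j/2)!}\mathscr{L}_{j}(\omega) & j\equiv0\,(2)\\[4pt] 0 & j\equiv1\,(2).\end{cases}
\]
Since $\mathscr{M}_{0}(X;\omega)\equiv 1$ we have $\mathscr{L}_{0}(\omega)=1$, and thus $\mathfrak{m}_{\omega,0}=1$, which is the mass normalization required for a probability measure.

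For existence of some positive Radon measure realizing this sequence, I would verify Hamburger's positivity condition, namely that for every $n\geq 0$ and every real vector $(c_{0},\ldots,c_{n})$ the Hankel form $\sum_{i,j=0}^{n}c_{i}c_{j}\mathfrak{m}_{\omega,i+j}$ is non-negative. The natural source of positivity is the pre-limit distribution $\mu_{{\scriptscriptstyle X,\omega}}$ itself, which has all moments finite since $\widehat{\mathcal{E}}(\cdot\,;\omega)$ is bounded on $[X,2X]$. For the real polynomial $P(\alpha)=\sum_{i=0}^{n}c_{i}\alpha^{i}$ we have $P^{2}\geq 0$, hence
\[
\sum_{i,j=0}^{n}c_{i}c_{j}\cdot\frac{1}{X}\int\limits_{X}^{2X}\bigg(\frac{\widehat{\mathcal{E}}(x;\omega)}{\sigma(X;\omega)}\bigg)^{i+j}\textit{d}x=\int_{-\infty}^{\infty}P^{2}(\alpha)\,\textit{d}\mu_{{\scriptscriptstyle X,\omega}}(\alpha)\geq 0.
\]
Letting $X\to\infty$ in this finite sum and applying Proposition $3$ term-by-term yields $\sum_{i,j=0}^{n}c_{i}c_{j}\mathfrak{m}_{\omega,i+j}\geq 0$. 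Hamburger's theorem then supplies a positive Radon measure $\mu_{{\scriptscriptstyle\omega}}$ on $\mathbb{R}$ with the prescribed moments, and $\mathfrak{m}_{\omega,0}=1$ forces this measure to be a probability measure.

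For uniqueness I would invoke Carleman's criterion: a Hamburger moment sequence $(m_{j})$ is determinate whenever $\sum_{j\geq 1}m_{2j}^{-1/(2j)}=\infty$, in which case any positive Radon measure matching every moment of the sequence is unique. This divergence is precisely condition \textbf{(4\textnormal{\textbf{c}})} in the definition of the class $\mathbf{\Omega}$, which is in force by hypothesis. Determinacy then identifies $\mu_{{\scriptscriptstyle\omega}}$ uniquely.

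There is no real obstacle here, as the argument is a textbook application of Hamburger and Carleman; the only delicate point is that the assumptions defining $\mathbf{\Omega}$, and in particular condition \textbf{(4\textnormal{\textbf{c}})}, have been tailored so that Carleman's criterion applies directly to the limiting sequence produced by Proposition $3$.
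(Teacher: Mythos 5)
Your proposal is correct, and it reaches the same endpoints (Hamburger for existence, Carleman via condition \textbf{(4c)} for uniqueness, $\mathfrak{m}_{\omega,0}=1$ forcing unit mass), but the positivity step is argued by a genuinely different route than the paper's. You obtain positive semidefiniteness of the Hankel form extrinsically: the target sequence is, by Proposition $3$, the termwise limit of the moment sequences of the genuine probability measures $\mu_{{\scriptscriptstyle X,\omega}}$, and $\int P^{2}\,\textit{d}\mu_{{\scriptscriptstyle X,\omega}}\geq0$ passes to the limit in a finite sum. The paper instead argues intrinsically: it writes $\sum_{j,\ell}\mathfrak{m}_{\omega,j+\ell}\xi_{j}\xi_{\ell}$ as the limit of $\frac{1}{X}\int_{X}^{2X}\sum_{j,\ell}\mathfrak{g}_{j+\ell}\hat{\xi}_{\omega,j}(X;x)\hat{\xi}_{\omega,\ell}(X;x)\,\textit{d}x$ with $\hat{\xi}_{\omega,j}(X;x)$ built from $(\omega(x)\log\omega(x))^{j}\mathscr{M}_{2}^{-j/2}(X;\omega)$, and then uses only the positive semidefiniteness of the Gau{\ss}ian moment sequence $\mathfrak{g}_{j}=\frac{j!}{2^{j/2}(j/2)!}$ pointwise in $x$; this needs nothing beyond the definition of $\mathscr{L}_{j}$ and condition \textbf{(4b)}, and in particular does not invoke the error term or Proposition $3$ at all. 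Your route is shorter and arguably more transparent, at the cost of importing the full strength of Proposition $3$ into Lemma $5$; since Proposition $3$ is established in $\S2.3$, before this lemma, there is no circularity and the argument is sound. The only points worth making explicit are the ones you already flag implicitly: $\widehat{\mathcal{E}}(\cdot\,;\omega)$ is bounded on $[X,2X]$ so all moments of $\mu_{{\scriptscriptstyle X,\omega}}$ are finite, and $\sigma(X;\omega)>0$ for large $X$ by Proposition $2$, so the normalization is legitimate.
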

\begin{proof}
Let $\omega\in\mathbf{\Omega}$, and define the sequence $(\mathfrak{m}_{\omega,j})_{j=0}^{\infty}$ by $\mathfrak{m}_{\omega,j}=\frac{j!}{2^{j/2}(j/2)!}\mathscr{L}_{j}(\omega)$ for $j\equiv0\,(2)$, and $\mathfrak{m}_{\omega,j}=0$ for $j\equiv1\,(2)$. We will show that the sequence just defined is positive
semidefinite, that is, for any $\xi_{1},\ldots,\xi_{n}\in\mathbb{R}$ we have $\sum_{j,\ell=0}^{n}\mathfrak{m}_{\omega,j+\ell}\xi_{j}\xi_{\ell}\geq0$. By the well known theorem of Hamburger (\cite{Hamburger}), we may deduce the existence of a (Radon) measure that satisfies \eqref{eq:5.1} (the case $j=0$ implies that any such measure has to be a probability measure). Moreover, such a measure is unique, since according to condition \textbf{(4\textnormal{\textbf{c}})} the sequence of even moments $(\mathfrak{m}_{\omega,j})_{j\equiv0\,(2)}^{\infty}$ satisfies Carleman’s criterion (\cite{Carleman}). So, let $\xi_{1},\ldots,\xi_{n}\in\mathbb{R}$ be given. We have
\begin{equation}\label{eq:5.2}
\begin{split}
\sum^{n}_{j,\ell=0}\mathfrak{m}_{\omega,j+\ell}\xi_{j}\xi_{\ell}&=\sum^{n}_{j\equiv\ell(2)}\frac{(j+\ell)!}{2^{(j+\ell)/2}((j+\ell)/2)!}\Bigg\{\lim_{X\to\infty}\frac{\mathscr{M}_{j+\ell}(X;\omega)}{\mathscr{M}^{(j+\ell)/2}_{2}(X;\omega)}\Bigg\}\xi_{j}\xi_{\ell}\\
&=\lim_{X\to\infty}\frac{1}{X}\int\limits_{X}^{2X}\Bigg\{\sum^{n}_{j,\ell=0}\mathfrak{g}_{j+\ell}\hat{\xi}_{\omega,j}(X;x)\hat{\xi}_{\omega,\ell}(X;x)\Bigg\}\textit{d}x\,,
\end{split}
\end{equation}
where for $X,x>0$ and $j=1,\ldots,n$ we set $\hat{\xi}_{\omega,j}(X;x)=(\omega(x)\log{\omega(x)})^{j}\mathscr{M}^{-j/2}_{2}(X;\omega)$, and the sequence $(\mathfrak{g}_{j})_{j=0}^{\infty}$ is given by $\mathfrak{g}_{j}=\frac{j!}{2^{j/2}(j/2)!}$ for $j\equiv0\,(2)$, and $\mathfrak{g}_{j}=0$ for $j\equiv1\,(2)$. Now, since $(\mathfrak{g}_{j})_{j=0}^{\infty}$ is semidefinite, it follows that
\begin{equation}\label{eq:5.3}
\sum^{n}_{j,\ell=0}\mathfrak{g}_{j+\ell}\hat{\xi}_{\omega,j}(X;x)\hat{\xi}_{\omega,\ell}(X;x)\geq0\,.
\end{equation}
Multiplying \eqref{eq:5.3} by $1/X$, integrating over the range $X<x<2X$ and then letting $X\to\infty$, we conclude from \eqref{eq:5.2} that $\sum^{n}_{j,\ell=0}\mathfrak{m}_{\omega,j+\ell}\xi_{j}\xi_{\ell}\geq0$ as required.
\end{proof}
\begin{lemma}
Let $\omega\in\mathbf{\Omega}$. Then from any sequence $(X_{n})_{n=1}^{\infty}$, $0<X_{n}\to\infty$, we may extract a sub-sequence $(Y_{k}:=X_{n_{k}})_{k=1}^{\infty}$, $Y_{k}\to\infty$, such that for any bounded continuous function $\mathcal{F}$ we have
\begin{equation}\label{eq:5.4}
\lim\limits_{k\to\infty}\frac{1}{Y_{k}}\int\limits_{ Y_{k}}^{2Y_{k}}\mathcal{F}\bigg(\frac{\widehat{\mathcal{E}}(x;\omega)}{\sigma(Y_{k};\omega)}\bigg)\textit{d}x=\int\limits_{-\infty}^{\infty}\mathcal{F}(\alpha)\textit{d}\mu_{{\scriptscriptstyle\omega}}(\alpha)\,,
\end{equation}
where $\mu_{{\scriptscriptstyle\omega}}$ is the measure from Lemma $5$.
\end{lemma}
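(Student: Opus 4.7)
(Lemma $6$, sketch).
The plan is to use a tightness plus method-of-moments argument. Let $\omega\in\mathbf{\Omega}$ and, given a sequence $X_{n}\to\infty$, consider the sequence of probability measures $\mu_{{\scriptscriptstyle X_{n},\omega}}$ introduced in $\S1.2$. The first step is to verify tightness. By Proposition $3$ with $j=2$ we have $\int_{-\infty}^{\infty}\alpha^{2}\textit{d}\mu_{{\scriptscriptstyle X_{n},\omega}}(\alpha)\to 1$ as $n\to\infty$, so Chebyshev's inequality yields $\mu_{{\scriptscriptstyle X_{n},\omega}}(\{|\alpha|>R\})\leq R^{-2}\int\alpha^{2}\textit{d}\mu_{{\scriptscriptstyle X_{n},\omega}}(\alpha)$, which is uniformly small in $n$ for $R$ large. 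By Prokhorov's theorem (or Helly's selection principle) we may therefore extract a subsequence $Y_{k}=X_{n_{k}}$ and a probability measure $\nu$ on $\mathbb{R}$ such that $\mu_{{\scriptscriptstyle Y_{k},\omega}}\to\nu$ weakly as $k\to\infty$.

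The next step is to identify $\nu$ with $\mu_{{\scriptscriptstyle\omega}}$. For this I would verify that $\nu$ has the moments listed in Lemma $5$, and then invoke the uniqueness part of that lemma. Fix a positive integer $j$. Writing $\alpha^{j}=\alpha^{j}\mathds{1}_{|\alpha|\leq R}+\alpha^{j}\mathds{1}_{|\alpha|>R}$, I choose $R>0$ so that $\pm R$ are continuity points of $\nu$, and approximate $\alpha^{j}\mathds{1}_{|\alpha|\leq R}$ uniformly by a bounded continuous function. Weak convergence then gives
\begin{equation*}
\lim_{k\to\infty}\int_{|\alpha|\leq R}\alpha^{j}\textit{d}\mu_{{\scriptscriptstyle Y_{k},\omega}}(\alpha)=\int_{|\alpha|\leq R}\alpha^{j}\textit{d}\nu(\alpha)\,,
\end{equation*}
while the tail contribution is controlled via Chebyshev and Proposition $3$ applied with $j$ replaced by $j+2$:
\begin{equation*}
\bigg|\int_{|\alpha|>R}\alpha^{j}\textit{d}\mu_{{\scriptscriptstyle Y_{k},\omega}}(\alpha)\bigg|\leq R^{-2}\int_{-\infty}^{\infty}|\alpha|^{j+2}\textit{d}\mu_{{\scriptscriptstyle Y_{k},\omega}}(\alpha)\ll_{j}R^{-2}\,,
\end{equation*}
uniformly in $k$, since the $(j+2)$-th (absolute) moments converge (to $\mathfrak{m}_{\omega,j+2}$ or, for odd indices, to $0$) and are hence bounded. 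Letting $R\to\infty$ along continuity points of $\nu$ and combining with the moment convergence provided by Proposition $3$, we obtain $\int\alpha^{j}\textit{d}\nu(\alpha)=\mathfrak{m}_{\omega,j}$ for every $j\geq1$ (and trivially for $j=0$).

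At this point Lemma $5$ applies: the measure $\mu_{{\scriptscriptstyle\omega}}$ is the \emph{unique} Radon probability measure with moments $(\mathfrak{m}_{\omega,j})$, so $\nu=\mu_{{\scriptscriptstyle\omega}}$. Consequently $\mu_{{\scriptscriptstyle Y_{k},\omega}}\to\mu_{{\scriptscriptstyle\omega}}$ weakly, which is exactly \eqref{eq:5.4} for bounded continuous $\mathcal{F}$. The main technical obstacle is the passage from weak subsequential convergence to equality of moments with $\mu_{{\scriptscriptstyle\omega}}$: this is where the uniform bound on higher moments (itself a consequence of the delicate Proposition $2$) is used in an essential way, and where Carleman's criterion from condition \textbf{(4c)}, hidden inside Lemma $5$, finally pays off by ensuring that matching moments force $\nu=\mu_{{\scriptscriptstyle\omega}}$.
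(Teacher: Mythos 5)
Your proposal is correct and follows essentially the same route as the paper: tightness via the second-moment bound from Proposition $3$ and Chebyshev, Helly/Prokhorov extraction of a weakly convergent subsequence, identification of the limit's moments with those in Lemma $5$, and the uniqueness statement of Lemma $5$ to conclude $\nu=\mu_{{\scriptscriptstyle\omega}}$. The only cosmetic differences are that the paper passes from weak convergence to moment convergence via a smooth cutoff $\psi(\alpha/\ell)$ together with Fatou's lemma and dominated convergence rather than your hard truncation at $R$, and that for odd $j$ the absolute $(j+2)$-th moments are merely uniformly bounded (e.g.\ by Cauchy--Schwarz against the even moments), not convergent to $0$ as you state --- but uniform boundedness is all your tail estimate requires.
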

\begin{proof}
Let $\omega\in\mathbf{\Omega}$ be given, and let $(X_{n})_{n=1}^{\infty}$ be sequence satisfying $0<X_{n}\to\infty$. According to Proposition $3$ we may find $X_{\omega}>0$ such that $X^{-1}\int_{X}^{2X}(\widehat{\mathcal{E}}(x;\omega)/\sigma(X;\omega))^{2}\textit{d}x\leq2\mathscr{L}_{2}(\omega)$ for all $X\geq X_{\omega}$. Consider the sequence $\big(\mu_{{\scriptscriptstyle X_{n},\omega}}: X_{n}\geq X_{\omega}\big)$ of distributions, where recall that $\mu_{{\scriptscriptstyle X,\omega}}$ is defined for an interval $\mathcal{A}$ on the real line by
\begin{equation*}
\int_{\mathcal{A}}\textit{d}\mu_{{\scriptscriptstyle X,\omega}}=\frac{1}{X}\textit{meas}\big\{X<x<2X:\widehat{\mathcal{E}}(x;\omega)/\sigma(X;\omega)\in\mathcal{A}\big\}\,,
\end{equation*}
where $\textit{meas}$ is the Lebesgue measure. We claim that this sequence is tight, that is, for any $\epsilon>0$ there exists $R=R_{\epsilon}>0$, such that $\int_{|x|>R}\textit{d}\mu_{{\scriptscriptstyle X_{n},\omega}}(x)<\epsilon$ for all $X_{n}\geq X_{\omega}$. Indeed, let $\epsilon>0$ be given and set $R_{\epsilon}=2\sqrt{\mathscr{L}_{2}(\omega)/\epsilon}$. We then have
%
%
\begin{equation}\label{eq:5.6}
\begin{split}
X_{n}\geq X_{\omega}:\quad\int_{|x|>R_{\epsilon}}\textit{d}\mu_{{\scriptscriptstyle X_{n},\omega}}(x)&=\frac{1}{X_{n}}\int\limits_{X_{n}}^{2X_{n}}\mathds{1}_{\big\{|\widehat{\mathcal{E}}(x;\omega)/\sigma(X_{n};\omega)|>R_{\epsilon}\big\}}\textit{d}x\\
&\leq\frac{1}{R_{\epsilon}^{2}}\frac{1}{X_{n}}\int\limits_{X_{n}}^{2X_{n}}\bigg(\frac{\widehat{\mathcal{E}}(x;\omega)}{\sigma(X_{n};\omega)}\bigg)^{2}\textit{d}x<\epsilon\,,
\end{split}
\end{equation}
which proves the claim.\\
The tightness of $\big(\mu_{{\scriptscriptstyle X_{n},\omega}}: X_{n}\geq X_{\omega}\big)$ allows us to appeal to the the Helly Compactness theorem, by which we may deduce the existence of a sub-sequence $(Y_{k}:=X_{n_{k}}\geq X_{\omega})_{k=1}^{\infty}$ satisfying $Y_{k}\to\infty$, and the existence of a limiting distribution, which we shall denote by $\upsilon$, such that $\mu_{{\scriptscriptstyle Y_{k},\omega}}\to\upsilon$ weakly, as $k\to\infty$. In other words, for any bounded continuous function $\mathcal{F}$ we have
\begin{equation}\label{eq:5.6}
\lim\limits_{k\to\infty}\frac{1}{Y_{k}}\int\limits_{ Y_{k}}^{2Y_{k}}\mathcal{F}\bigg(\frac{\widehat{\mathcal{E}}(x;\omega)}{\sigma(Y_{k};\omega)}\bigg)\textit{d}x=\int\limits_{-\infty}^{\infty}\mathcal{F}(\alpha)\textit{d}\upsilon(\alpha)\,.
\end{equation}
It remains to sow that $\upsilon=\mu_{{\scriptscriptstyle\omega}}$, where $\mu_{{\scriptscriptstyle\omega}}$ is the measure from Lemma $5$. First, observe that $\upsilon$ has finite moments of any order. Indeed, it suffices to show this for the even moments. To that end, let $\psi\in C^{\infty}_{0}(\mathbb{R})$ satisfy $0\leq\psi(y)\leq1$, and $\psi(y)=1$ for $|y|\leq1$. It then follows from \eqref{eq:5.6}, Proposition $3$ and Fatou's lemma, that\label{eq:5.7}
\begin{equation}
\begin{split}
\int\limits_{-\infty}^{\infty}\alpha^{2j}\textit{d}\upsilon(\alpha)\leq\liminf_{\ell\to\infty}\int\limits_{-\infty}^{\infty}\psi(\alpha/\ell)\alpha^{2j}\textit{d}\upsilon(\alpha)&=\liminf_{\ell\to\infty}\lim\limits_{k\to\infty}\frac{1}{Y_{k}}\int\limits_{ Y_{k}}^{2Y_{k}}\psi\bigg(\frac{\widehat{\mathcal{E}}(x;\omega)}{\sigma(Y_{k};\omega)\ell}\bigg)\bigg(\frac{\widehat{\mathcal{E}}(x;\omega)}{\sigma(Y_{k};\omega)}\bigg)^{2j}\textit{d}x\\
&\leq\lim\limits_{k\to\infty}\frac{1}{Y_{k}}\int\limits_{ Y_{k}}^{2Y_{k}}\bigg(\frac{\widehat{\mathcal{E}}(x;\omega)}{\sigma(Y_{k};\omega)}\bigg)^{2j}\textit{d}x<\infty\,.
\end{split}
\end{equation}
It now follows from \eqref{eq:5.6}, Proposition $3$ and Lebesgue's dominated convergence theorem, that for any non-negative integer $j$
\begin{equation}\label{eq:5.8}
\begin{split}
\int\limits_{-\infty}^{\infty}\alpha^{j}\textit{d}\upsilon(\alpha)=\lim_{\ell\to\infty}\int\limits_{-\infty}^{\infty}\psi(\alpha/\ell)\alpha^{j}\textit{d}\upsilon(\alpha)&=\lim_{\ell\to\infty}\lim\limits_{k\to\infty}\frac{1}{Y_{k}}\int\limits_{ Y_{k}}^{2Y_{k}}\psi\bigg(\frac{\widehat{\mathcal{E}}(x;\omega)}{\sigma(Y_{k};\omega)\ell}\bigg)\bigg(\frac{\widehat{\mathcal{E}}(x;\omega)}{\sigma(Y_{k};\omega)}\bigg)^{j}\textit{d}x\\
&=\lim\limits_{\ell\to\infty}\Bigg\{\lim\limits_{k\to\infty}\frac{1}{Y_{k}}\int\limits_{Y_{k}}^{2Y_{k}}\Bigg(\frac{\widehat{\mathcal{E}}(x;\omega)}{\sigma(Y_{k};\omega)}\bigg)^{j}\textit{d}x+O_{j}\bigg(\frac{1}{\ell}\bigg)\Bigg\}\\
&=\left\{
        \begin{array}{ll}
           \frac{j!}{2^{j/2}(j/2)!}\mathscr{L}_{j}(\omega) & ;\,j\equiv0\,(2)
            \\\\
           0 & ;\, j\equiv1\,(2)\,.
        \end{array}
    \right.
\end{split}
\end{equation}
Thus, $\upsilon$ has the same moments as $\mu_{{\scriptscriptstyle\omega}}$. By uniqueness of $\mu_{{\scriptscriptstyle\omega}}$, it must be that $\upsilon=\mu_{{\scriptscriptstyle\omega}}$. This concludes the proof. 
\end{proof}
\noindent
We are now ready to present the proof of Theorem $1$.
\begin{proof}(Theorem 1). Fix a gap width function $\omega\in\mathbf{\Omega}$, and let $\mu_{{\scriptscriptstyle\omega}}$ be the measure in Lemma $5$. We claim that for any bounded continuous function $\mathcal{F}$, it holds
\begin{equation}\label{eq:5.9}
\lim\limits_{X\to\infty}\frac{1}{X}\int\limits_{ X}^{2X}\mathcal{F}\bigg(\frac{\widehat{\mathcal{E}}(x;\omega)}{\sigma(X;\omega)}\bigg)\textit{d}x=\int\limits_{-\infty}^{\infty}\mathcal{F}(\alpha)\textit{d}\mu_{{\scriptscriptstyle\omega}}(\alpha)\,.
\end{equation}
To prove the claim, we argue by contradiction. Suppose that there exists a bounded continuous function $\mathcal{K}$ such that \eqref{eq:5.9} does not hold. We may then find a sequence $(X_{n})_{n=1}^{\infty}$, $0<X_{n}\to\infty$, such that the limit
\begin{equation}\label{eq:5.10}
I(\mathcal{K}):=\lim\limits_{n\to\infty}\frac{1}{X_{n}}\int\limits_{ X}^{2X_{n}}\mathcal{K}\bigg(\frac{\widehat{\mathcal{E}}(x;\omega)}{\sigma(X_{n};\omega)}\bigg)\textit{d}x
\end{equation}
exists, and satisfies $I(\mathcal{F})\neq\int_{-\infty}^{\infty}\mathcal{F}(\alpha)\textit{d}\mu_{{\scriptscriptstyle\omega}}(\alpha)$. Now, according to Lemma $6$ we may extract a sub-sequence $(Y_{k}:=X_{n_{k}})_{k=1}^{\infty}$, $Y_{k}\to\infty$, such that for any bounded continuous function $\mathcal{F}$ we have
\begin{equation}\label{eq:5.11}
\lim\limits_{k\to\infty}\frac{1}{Y_{k}}\int\limits_{ Y_{k}}^{2Y_{k}}\mathcal{F}\bigg(\frac{\widehat{\mathcal{E}}(x;\omega)}{\sigma(Y_{k};\omega)}\bigg)\textit{d}x=\int\limits_{-\infty}^{\infty}\mathcal{F}(\alpha)\textit{d}\mu_{{\scriptscriptstyle\omega}}(\alpha)\,.
\end{equation}
Specializing \eqref{eq:5.11} to $\mathcal{F}=\mathcal{K}$, we obtain the desired contradiction
\begin{equation}\label{eq:5.12}
\begin{split}
\int\limits_{-\infty}^{\infty}\mathcal{K}(\alpha)\textit{d}\mu_{{\scriptscriptstyle\omega}}(\alpha)\neq I(\mathcal{K}):=\lim\limits_{n\to\infty}\frac{1}{X_{n}}\int\limits_{ X}^{2X_{n}}\mathcal{K}\bigg(\frac{\widehat{\mathcal{E}}(x;\omega)}{\sigma(X_{n};\omega)}\bigg)\textit{d}x&=\lim\limits_{k\to\infty}\frac{1}{Y_{k}}\int\limits_{ Y_{k}}^{2Y_{k}}\mathcal{K}\bigg(\frac{\widehat{\mathcal{E}}(x;\omega)}{\sigma(Y_{k};\omega)}\bigg)\textit{d}x\\
&=\int\limits_{-\infty}^{\infty}\mathcal{K}(\alpha)\textit{d}\mu_{{\scriptscriptstyle\omega}}(\alpha)\,.
\end{split}
\end{equation}
%
Hence \eqref{eq:5.9} holds for all bounded continuous functions $\mathcal{F}$. The extension of \eqref{eq:5.9} to include bounded (piecewise)-continuous function $\mathcal{F}$ whose points of discontinuity are points of continuity of $\mu_{{\scriptscriptstyle\omega}}$ is now straightforward, and so \eqref{eq:1.1} in Theorem $1$ is proved.\\
Item \textnormal{\textbf{(A)}} in Theorem $1$ follows from Lemma $5$, while item \textnormal{\textbf{(B)}} follows from Proposition $3$ and Lebesgue's dominated convergence theorem (in the exact same manner as \eqref{eq:5.8} is proved with $\alpha^{j}$ replaced by $\mathcal{F}(\alpha)$ which by assumption has polynomial growth). This concludes the proof of Theorem $1$.
\end{proof}
\noindent
The proof of corollaries \textbf{(2A)} and \textbf{(2B)} follow immediately from the following lemma. 
\begin{lemma}
Let $\varphi_{{\scriptscriptstyle\ell}}(t)=|p_{{\scriptscriptstyle\ell}}(\exp{(2\pi it)})|^{2}$, and $\lambda_{{\scriptscriptstyle\ell}}\in\mathbb{R}$ with $\ell=1,\ldots,n$ be given, where $p_{{\scriptscriptstyle\ell}}$ are polynomials with no roots on the unit circle. Fix an integer $A>1$, and let $\omega_{{\scriptscriptstyle\times}}(x):=\big\{\prod_{\ell=1}^{n}\varphi_{{\scriptscriptstyle\ell}}\big(\lambda_{{\scriptscriptstyle\ell}}(\log{x})^{A}\big)\big\}(\log{x})^{-A}$ and $\omega_{{\scriptscriptstyle+}}(x):=\big\{\sum_{\ell=1}^{n}\varphi_{{\scriptscriptstyle\ell}}\big(\lambda_{{\scriptscriptstyle\ell}}(\log{x})^{A}\big)\big\}(\log{x})^{-A}$. Then the following holds.\\
\textnormal{\textbf{(i)}} $\omega_{{\scriptscriptstyle\times}}, \omega_{{\scriptscriptstyle+}}\in\mathbf{\Omega}$.\\
\textnormal{\textbf{(ii)}} Suppose that $\lambda_{{\scriptscriptstyle1}},\ldots,\lambda_{{\scriptscriptstyle n}}$ are linearly independent over $\mathbb{Z}$. Let $\Phi_{{\scriptscriptstyle\times}}(\mathbf{t})=\Phi_{{\scriptscriptstyle\times}}((t_{{\scriptscriptstyle1}},\ldots, t_{{\scriptscriptstyle n}}))=\prod_{\ell=1}^{n}\varphi_{\ell}(t_{{\scriptscriptstyle\ell}})$ and $\Theta_{{\scriptscriptstyle+}}(\mathbf{t})=\Theta_{{\scriptscriptstyle+}}((t_{{\scriptscriptstyle1}},\ldots, t_{{\scriptscriptstyle n}}))=\sum_{\ell=1}^{n}\varphi_{\ell}(t_{{\scriptscriptstyle\ell}})$. Then
\begin{equation}\label{eq:3.13}
\begin{split}
&\lim_{X\to\infty}\frac{\mathscr{M}_{j}(X;\omega_{{\scriptscriptstyle\times}})}{\mathscr{M}^{j/2}_{2}(X;\omega_{{\scriptscriptstyle\times}})}=\bigg(\frac{\Vert\Phi_{{\scriptscriptstyle\times}}\Vert_{{\scriptscriptstyle j}}}{\Vert\Phi_{{\scriptscriptstyle\times}}\Vert_{{\scriptscriptstyle2}}}\bigg)^{j}\\
&\lim_{X\to\infty}\frac{\mathscr{M}_{j}(X;\omega_{{\scriptscriptstyle+}})}{\mathscr{M}^{j/2}_{2}(X;\omega_{{\scriptscriptstyle+}})}=\bigg(\frac{\Vert\Theta_{{\scriptscriptstyle+}}\Vert_{{\scriptscriptstyle j}}}{\Vert\Theta_{{\scriptscriptstyle+}}\Vert_{{\scriptscriptstyle2}}}\bigg)^{j}\,,
\end{split}
\end{equation}
where $\Vert\Phi_{{\scriptscriptstyle\times}}\Vert_{{\scriptscriptstyle j}}=\Big(\int_{\mathbf{t}\in[0,1)^{n}}\Phi^{j}_{{\scriptscriptstyle\times}}(\mathbf{t})\textit{d}\mathbf{t}\Big)^{1/j}$ and $\Vert\Theta_{{\scriptscriptstyle+}}\Vert_{{\scriptscriptstyle j}}=\Big(\int_{\mathbf{t}\in[0,1)^{n}}\Theta^{j}_{{\scriptscriptstyle+}}(\mathbf{t})\textit{d}\mathbf{t}\Big)^{1/j}$ denote the $j$-norm of $\Phi_{{\scriptscriptstyle\times}}$ and $\Theta_{{\scriptscriptstyle+}}$ respectively.
\end{lemma}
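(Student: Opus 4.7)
I will reduce both parts of the lemma to a single equidistribution input together with otherwise routine estimates. The $\mathbb{Z}$-linear independence of $\lambda_1,\ldots,\lambda_n$ will be used through Weyl's criterion to show that the orbit $x\mapsto(\lambda_1(\log x)^A,\ldots,\lambda_n(\log x)^A)\bmod\mathbb{Z}^n$ equidistributes on $[0,1)^n$ relative to the averaging measure $X^{-1}\mathds{1}_{[X,2X]}(x)\,dx$ as $X\to\infty$. The sole non-trivial estimate is that, for any fixed $\mu\neq0$, a single integration by parts against the phase $\phi(x)=2\pi\mu(\log x)^A$ (using $1/\phi'(x)=x/(2\pi\mu A(\log x)^{A-1})$ and the boundedness of $(1/\phi')'$) yields
\[
\frac{1}{X}\int_X^{2X}e^{2\pi i\mu(\log x)^A}\,dx=O_{\mu}\bigl((\log X)^{-(A-1)}\bigr),
\]
which vanishes since $A>1$.

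\textbf{Proof of (ii).} Because each $\varphi_\ell$ is a trigonometric polynomial in $t_\ell$, the function $\Phi_\times^j$ is a trigonometric polynomial on $[0,1)^n$, so it admits a \emph{finite} Fourier expansion $\Phi_\times^j(\mathbf{t})=\sum_{\mathbf{k}}c_{\mathbf{k}}e^{2\pi i\mathbf{k}\cdot\mathbf{t}}$ with constant term $c_{\mathbf{0}}=\|\Phi_\times\|_j^j$. Applying the Weyl estimate above to each nonzero frequency $\mathbf{k}\cdot\boldsymbol{\lambda}$ (nonzero by linear independence) term-by-term yields
\[
\frac{1}{X}\int_X^{2X}\Phi_\times^j\bigl(\lambda_1(\log x)^A,\ldots,\lambda_n(\log x)^A\bigr)\,dx=\|\Phi_\times\|_j^j+O\bigl((\log X)^{-(A-1)}\bigr).
\]
Since each $|p_\ell(e^{2\pi it})|^2$ is bounded above and below by positive constants, so is $\Phi_\times$; hence $\log\omega_\times(x)=-A\log\log x+O(1)$, and for even $j$
\[
\bigl(\omega_\times(x)\log\omega_\times(x)\bigr)^j=A^j\,\Phi_\times^j\bigl(\lambda_1(\log x)^A,\ldots\bigr)(\log x)^{-jA}(\log\log x)^j\bigl(1+O((\log\log x)^{-1})\bigr).
\]
The prefactor $(\log x)^{-jA}(\log\log x)^j$ is asymptotically constant on $[X,2X]$, so integrating gives $\mathscr{M}_j(X;\omega_\times)=A^j(\log X)^{-jA}(\log\log X)^j\|\Phi_\times\|_j^j(1+o(1))$. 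Dividing by $\mathscr{M}_2(X;\omega_\times)^{j/2}$ cancels both the logarithmic prefactors and the constant $A^j$, leaving $(\|\Phi_\times\|_j/\|\Phi_\times\|_2)^j$. The additive case is identical with $\Theta_+$ in place of $\Phi_\times$.

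\textbf{Verification of (i) and main obstacle.} Properties (1) and (2) are direct: $\Phi_\times,\Theta_+$ are bounded above and below by positive constants (no roots on the unit circle), and the chain rule gives $\omega^{(1)}(x)=O(1/(x\log x))$. For (3) I would write
\[
\omega_\times^{(1)}(x)=\frac{A}{x\log x}\Bigl[H_\times(\log x)-\Phi_\times\bigl(\lambda_1(\log x)^A,\ldots\bigr)(\log x)^{-A}\Bigr],
\]
where $H_\times(u)=\sum_\ell\lambda_\ell(\partial_\ell\Phi_\times)(\lambda_1u^A,\ldots,\lambda_nu^A)$; this is of the form $P(u^A)$ for a finite exponential sum $P$ in the variable $v=u^A$. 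As $u$ runs through $[\log X,\log 2X]$, the variable $v$ spans an interval of length $O((\log X)^{A-1})$, and a standard Polya/Langer-type bound gives $O((\log X)^{A-1})$ zeros of $P$, hence $|\mathcal{U}_X^\omega|\ll(\log X)^{A-1}$. Combined with $\omega_{\max}\ll(\log X)^{-A}$ this gives (3a), and the same count gives (3b) for any $\xi_\omega>0$; the analogous analysis handles $\omega^{(2)}$ and $\omega_+$. Conditions (4a), (4b) follow from part (ii): (4b) is its content, while (4a) holds because $\mathscr{M}_2(X;\omega)\asymp(\log X)^{-2A}(\log\log X)^2$ exceeds $X^{-\alpha}$ for every $\alpha>0$. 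For (4c), $\|\Phi_\times\|_j\leq\|\Phi_\times\|_\infty$ together with Stirling gives $\mathfrak{m}_{\omega_\times,j}\ll(Cj/e)^{j/2}$, so $\mathfrak{m}_{\omega_\times,j}^{-1/j}\gg j^{-1/2}$ and Carleman's series diverges. The principal technical obstacle is the zero-count in (3a)–(3b), since the equation $\omega^{(1)}(x)=0$ is transcendental: one must combine a bound on the number of zeros of a finite exponential sum on a bounded interval in $v$ with a perturbation argument absorbing the subleading $\Phi_\times(\ldots)(\log x)^{-A}$ term, and verify that $H_\times$ is not identically zero (automatic unless $\Phi_\times$ is constant, a trivial case). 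The remaining steps—the oscillatory-integral bound by parts, the Fourier expansion, and the asymptotics of $(\omega\log\omega)^j$—are routine.
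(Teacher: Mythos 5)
Your treatment of part (ii) is correct and is essentially the paper's argument in different clothing: the paper expands $\prod_{\ell}\varphi_{\ell}(\lambda_{\ell}t)$ as an almost periodic trigonometric polynomial with frequency set $\mathfrak{F}=\{\sum_{\ell}m_{\ell}\lambda_{\ell}\}$, kills the nonzero frequencies by a single integration by parts against the phase $\exp(2\pi i\mathfrak{f}(\log x)^{A})$ (gaining the same factor $(\log x)^{-(A-1)}$ you obtain), and then uses $\mathbb{Z}$-linear independence to factor the diagonal sum $\sum_{\sum\mathfrak{f}_{\ell}=0}\prod\mathfrak{a}_{\mathfrak{f}_{\ell}}$ into $\prod_{\ell}\int_{0}^{1}\varphi_{\ell}^{j}$; identifying the constant Fourier coefficient of $\Phi_{\times}^{j}$ with $\Vert\Phi_{\times}\Vert_{j}^{j}$ is the same computation. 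Your verification of (1), (2), (4a), (4b), (4c) likewise matches the paper (the paper also bounds $\mathscr{L}_{j}\leq(M/m)^{j}$ and invokes Carleman).

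The one genuine gap is exactly where you flag it: the zero counts in (3a)--(3b). The perturbation scheme you sketch -- count zeros of $H_{\times}(u)$ as an exponential sum in $v=u^{A}$ and then ``absorb'' the term $-\Phi_{\times}(\cdots)(\log x)^{-A}$ -- does not go through as stated, for two reasons. First, the perturbed function $H_{\times}(u)-\Phi_{\times}(\lambda_{1}u^{A},\ldots)u^{-A}$ is not an exponential sum in $v$ because of the $v^{-1}$ factor, so no off-the-shelf zero-count applies to it directly. Second, a perturbation that is merely small (here it is in fact bounded \emph{away} from zero, of size $\asymp(\log X)^{-A}$) can create an uncontrolled number of zeros near points where $H_{\times}$ has a near-tangential zero; controlling this would require quantitative lower bounds on $H_{\times}$ away from its zero set, which you do not have. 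The paper avoids the issue entirely by clearing denominators first: it writes
\begin{equation*}
\omega^{{\scriptscriptstyle(1)}}_{{\scriptscriptstyle\times}}(x)=\frac{A}{x(\log{x})^{A+1}}\,\mathcal{H}\big(\log{x}\big),\qquad \mathcal{H}(z)=\sum_{\mathfrak{f}\in\mathfrak{F}}\big(2\pi i\mathfrak{f}z^{A}-1\big)\mathfrak{a}_{{\scriptscriptstyle\mathfrak{f}}}\exp{\big(2\pi i\mathfrak{f}z^{A}\big)}\,,
\end{equation*}
observes that $\mathcal{H}$ is an entire function of order $A$ with $\mathcal{H}(0)=-\prod_{\ell}\varphi_{\ell}(0)\neq0$ (no roots on the unit circle, so never identically zero -- including in your ``trivial'' constant case), and applies Jensen's formula on the disk $|z|\leq\log{2X}$ to get $|\mathcal{U}^{\omega_{\times}}_{X}|\ll(\log{X})^{A}$; the same device with a second function $\mathcal{Q}$ handles $\omega^{{\scriptscriptstyle(2)}}_{{\scriptscriptstyle\times}}$ and gives (3b). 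Your route can be repaired in the same spirit: multiply your bracket by $v=(\log x)^{A}$ so that the whole expression becomes a single finite exponential sum with polynomial coefficients in $v$, which has $O(L)$ zeros on an interval of length $L\asymp(\log X)^{A-1}$; but as written the perturbation step is a hole, not a routine detail.
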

\begin{proof}
We shall prove the lemma only for $\omega_{{\scriptscriptstyle\times}}\in\mathbf{\Omega}$, the case of $\omega_{{\scriptscriptstyle+}}$ being identical. We begin with item \textnormal{\textbf{(i)}}. Clearly, $\omega_{{\scriptscriptstyle\times}}$ satisfies condition \textbf{(1)} and condition \textbf{(2)} in the definition of $\mathbf{\Omega}$ is satisfied. Letting $\varphi_{{\scriptscriptstyle\ell}}(t)=|p_{{\scriptscriptstyle\ell}}(\exp{(2\pi it)})|^{2}=\sum^{d_{{\scriptscriptstyle\ell}}}_{m=-d_{{\scriptscriptstyle\ell}}}a_{{\scriptscriptstyle m,\ell}}\exp{(2\pi im_{{\scriptscriptstyle}}t)}$, with $d_{{\scriptscriptstyle\ell}}$ a non-negative integer and where the coefficients satisfy the relation $a_{{\scriptscriptstyle-m,\ell}}=\Bar{a}_{{\scriptscriptstyle m,\ell}}$, we see that $\prod_{\ell=1}^{n}\varphi_{{\scriptscriptstyle\ell}}(\lambda_{{\scriptscriptstyle\ell}}t)$ can be expressed as an almost periodic (real) trigonometric polynomial, namely, denoting by $\mathfrak{F}$ the (finite) set $\mathfrak{F}=\{\sum^{n}_{\ell=1}m_{{\scriptscriptstyle\ell}}\lambda_{{\scriptscriptstyle\ell}}:|m_{{\scriptscriptstyle\ell}}|\leq d_{{\scriptscriptstyle\ell}}\}$, we have
%
\begin{equation}\label{eq:3.14}
\prod_{\ell=1}^{n}\varphi_{{\scriptscriptstyle\ell}}(\lambda_{{\scriptscriptstyle\ell}}t)=\sum_{\mathfrak{f}\in\mathfrak{F}}\mathfrak{a}_{{\scriptscriptstyle\mathfrak{f}}}\exp{\big(2\pi i\mathfrak{f}t\big)}\quad;\quad\mathfrak{a}_{{\scriptscriptstyle\mathfrak{f}}}=\underset{\sum^{n}_{\ell=1}m_{{\scriptscriptstyle\ell}}\lambda_{{\scriptscriptstyle\ell}}=\mathfrak{f}}{\sum_{|m_{{\scriptscriptstyle1}}|\leq d_{{\scriptscriptstyle1}},\ldots,|m_{{\scriptscriptstyle n}}|\leq d_{{\scriptscriptstyle n}}}}a_{m_{{\scriptscriptstyle1}},1}\cdots a_{m_{{\scriptscriptstyle n}},n}\,,
\end{equation}
where $f\in\mathfrak{F}$ iff $-f\in\mathfrak{F}$, and $\mathfrak{a}_{{\scriptscriptstyle-\mathfrak{f}}}=\Bar{\mathfrak{a}}_{{\scriptscriptstyle\mathfrak{f}}}$. For the derivatives of $\omega_{{\scriptscriptstyle\times}}$ we obtain
\begin{equation}
\omega^{{\scriptscriptstyle(1)}}_{{\scriptscriptstyle\times}}(x)=\frac{A}{x(\log{x})^{A+1}}\sum_{\mathfrak{f}\in\mathfrak{F}}\big(2\pi i\mathfrak{f}(\log{x})^{A}-1\big)\mathfrak{a}_{{\scriptscriptstyle\mathfrak{f}}}\exp{\big(2\pi i\mathfrak{f}(\log{x})^{A}\big)}\,,
\end{equation}
and 
\begin{equation}
\begin{split}
&\omega^{{\scriptscriptstyle(2)}}_{{\scriptscriptstyle\times}}(x)\\
&=-\frac{A}{x^{2}(\log{x})^{A+2}}\sum_{\mathfrak{f}\in\mathfrak{F}}\Big((A+1+\log{x})\big(2\pi i\mathfrak{f}(\log{x})^{A}-1\big)+A(2\pi\mathfrak{f})^{2}(\log{x})^{2A}\Big)\mathfrak{a}_{{\scriptscriptstyle\mathfrak{f}}}\exp{\big(2\pi i\mathfrak{f}(\log{x})^{A}\big)}\,.
\end{split}
\end{equation}
Let us define two functions, $\mathcal{H}(z)$ and $\mathcal{Q}(z)$ with $z\in\mathbb{C}$ a complex variable, by
\begin{equation}
\mathcal{H}(z)=\sum_{\mathfrak{f}\in\mathfrak{F}}\big(2\pi i\mathfrak{f}z^{A}-1\big)\mathfrak{a}_{{\scriptscriptstyle\mathfrak{f}}}\exp{\big(2\pi i\mathfrak{f}z^{A}\big)}\,,
\end{equation}
and
\begin{equation}
\mathcal{Q}(z)=\sum_{\mathfrak{f}\in\mathfrak{F}}\big((A+1+z)\big(2\pi i\mathfrak{f}z^{A}-1\big)+A(2\pi\mathfrak{f})^{2}z^{2A}\big)\mathfrak{a}_{{\scriptscriptstyle\mathfrak{f}}}\exp{\big(2\pi i\mathfrak{f}z^{A}\big)}\,,
\end{equation}
Then on recalling that $\mathcal{U}^{\omega_{\times}}_{{\scriptscriptstyle X}}=\{X\leq x\leq2X:\omega^{{\scriptscriptstyle(1)}}_{{\scriptscriptstyle\times}}(x)=0\}$ and $\mathcal{V}^{\omega_{\times}}_{{\scriptscriptstyle X}}:=\{X\leq x\leq2X:\omega^{{\scriptscriptstyle(2)}}_{{\scriptscriptstyle\times}}(x)=0\}$, it follows that
\begin{equation}\label{eq:3.19}
\begin{split}
&|\mathcal{U}^{\omega_{\times}}_{{\scriptscriptstyle X}}|=\big|\big\{x\in\mathbb{R}:\mathcal{H}(x)=0\text{ and }\log{X}\leq x\leq\log{2X}\big\}\big|\\
&|\mathcal{V}^{\omega_{\times}}_{{\scriptscriptstyle X}}|=\big|\big\{x\in\mathbb{R}:\mathcal{Q}(x)=0\text{ and }\log{X}\leq x\leq\log{2X}\big\}\big|\,.
\end{split}
\end{equation}
Now, $\mathcal{H}(z)$ and $\mathcal{Q}(z)$ are entire functions of $z$ of order $A$, and are both non-zero since $\mathcal{H}(0)=-\prod_{\ell=1}^{n}\varphi_{{\scriptscriptstyle\ell}}(0)\neq0$ and $\mathcal{H}(0)=-(A+1)\prod_{\ell=1}^{n}\varphi_{{\scriptscriptstyle\ell}}(0)\neq0$. An application of Jensen's formula then gives us an upper bound on the number of zeros of $\mathcal{H}(z)$ and $\mathcal{Q}(z)$ in a circle $|z|\leq R$ for large $R>0$
\begin{equation}\label{eq:3.20}
\begin{split}
&\big|\big\{z\in\mathbb{C}:\mathcal{H}(z)=0\text{ and }|z|\leq R\big\}\big|\ll R^{A}\\
&\big|\big\{z\in\mathbb{C}:\mathcal{Q}(z)=0\text{ and }|z|\leq R\big\}\big|\ll R^{A}\,,
\end{split}
\end{equation}
where the implied constant depends only on the frequencies $\mathfrak{f}$ and the coefficients $\mathfrak{a}_{{\scriptscriptstyle\mathfrak{f}}}$. Now, let $X>0$ be large. It then follows from \eqref{eq:3.19}, \eqref{eq:3.20} that 
\begin{equation}
\begin{split}
&|\mathcal{U}^{\omega_{\times}}_{{\scriptscriptstyle X}}|\leq\big|\big\{z\in\mathbb{C}:\mathcal{H}(z)=0\text{ and }|z|\leq\log{2X}\big\}\big|\ll(\log{X})^{A}\\
&|\mathcal{V}^{\omega_{\times}}_{{\scriptscriptstyle X}}|\leq\big|\big\{z\in\mathbb{C}:\mathcal{Q}(z)=0\text{ and }|z|\leq\log{2X}\big\}\big|\ll(\log{X})^{A}\,.
\end{split}
\end{equation}
We conclude that $|\mathcal{U}^{\omega_{{\scriptscriptstyle\times}}}_{{\scriptscriptstyle X}}|\max_{X\leq x\leq2X}\omega_{{\scriptscriptstyle\times}}(x)\ll1$, and that $|\mathcal{V}^{\omega_{{\scriptscriptstyle\times}}}_{{\scriptscriptstyle X}}|\ll(\log{X})^{A}$. Hence, conditions \textbf{(3\textnormal{\textbf{a}})} and \textbf{(3\textnormal{\textbf{b}})} are satisfied.\\
Now, let $j\equiv0\,(2)$ be a positive integer. First, observe that
\begin{equation}\label{eq:3.22}
0<m:=\prod^{n}_{\ell=1}\underset{t\in[0,1)}{\min}\varphi_{{\scriptscriptstyle\ell}}(t)\leq\prod_{\ell=1}^{n}\varphi_{{\scriptscriptstyle\ell}}\big(\lambda_{{\scriptscriptstyle\ell}}(\log{x})^{A}\big)\leq\prod^{n}_{\ell=1}\underset{t\in[0,1)}{\max}\varphi_{{\scriptscriptstyle\ell}}(t):=M<\infty\,.
\end{equation}
In what follows, the implied constants in the $O$-terms are allowed to depend on $m,M$ and $j$. We  have 
\begin{equation}\label{eq:3.23}
\begin{split}
\mathscr{M}_{j}(X;\omega_{{\scriptscriptstyle\times}})&=\frac{1}{X}\int\limits_{X}^{2X}\big(\omega_{{\scriptscriptstyle\times}}(x)\log{\omega_{{\scriptscriptstyle\times}}(x)}\big)^{j}\textit{d}x\\
&=\bigg(A\frac{\log{\log{X}}}{(\log{X})^{A}}\bigg)^{j}\Bigg\{\frac{1}{X}\int\limits_{X}^{2X}\bigg\{\prod_{\ell=1}^{n}\varphi_{{\scriptscriptstyle\ell}}\big(\lambda_{{\scriptscriptstyle\ell}}(\log{x})^{A}\big)\bigg\}^{j}\textit{d}x+O\bigg(\frac{1}{\log{\log{X}}}\bigg)\Bigg\}\,.
\end{split}
\end{equation}
From \eqref{eq:3.22} and \eqref{eq:3.23} it follows that $\tau(\omega_{{\scriptscriptstyle\times}}):=\inf\{\alpha>0: \mathscr{M}_{2}(X;\omega_{{\scriptscriptstyle\times}})\gg X^{-\alpha}\}=0$, hence condition \textbf{(4\textnormal{\textbf{a}})} is satisfied. Now, by \eqref{eq:3.14} and the first mean value theorem for integrals, we have
%
%
%
\begin{equation}\label{eq:3.24}
\begin{split}
&\frac{1}{X}\int\limits_{X}^{2X}\bigg\{\prod_{\ell=1}^{n}\varphi_{{\scriptscriptstyle\ell}}\big(\lambda_{{\scriptscriptstyle\ell}}(\log{x})^{A}\big)\bigg\}^{j}\textit{d}x\\
&=\underset{\sum_{\ell=1}^{j}\mathfrak{f}_{{\scriptscriptstyle\ell}}=0}{\sum_{\mathfrak{f}_{{\scriptscriptstyle1}},\ldots,\mathfrak{f}_{{\scriptscriptstyle j}}\in\mathfrak{F}}}\bigg(\prod_{\ell=1}^{j}\mathfrak{a}_{{\scriptscriptstyle\mathfrak{f}}_{\ell}}\bigg)+\frac{1}{2\pi iA}\,\,\underset{\sum_{\ell=1}^{j}\mathfrak{f}_{{\scriptscriptstyle\ell}}\neq0}{\sum_{\mathfrak{f}_{{\scriptscriptstyle1}},\ldots,\mathfrak{f}_{{\scriptscriptstyle j}}\in\mathfrak{F}}}\Bigg(\frac{\prod_{\ell=1}^{j}\mathfrak{a}_{{\scriptscriptstyle\mathfrak{f}}_{\ell}}}{\sum_{\ell=1}^{j}\mathfrak{f}_{{\scriptscriptstyle\ell}}}\Bigg)\frac{1}{X}\int\limits_{X}^{2X}\frac{x}{(\log{x})^{A-1}}\textit{d}\Bigg(\exp{\bigg(2\pi i\bigg\{\sum_{\ell=1}^{j}\mathfrak{f}_{{\scriptscriptstyle\ell}}\bigg\}(\log{x})^{A}\bigg)}\Bigg)\\
&=\underset{\sum_{\ell=1}^{j}\mathfrak{f}_{{\scriptscriptstyle\ell}}=0}{\sum_{\mathfrak{f}_{{\scriptscriptstyle1}},\ldots,\mathfrak{f}_{{\scriptscriptstyle j}}\in\mathfrak{F}}}\bigg(\prod_{\ell=1}^{j}\mathfrak{a}_{{\scriptscriptstyle\mathfrak{f}}_{\ell}}\bigg)+O\Bigg(\frac{1}{(\log{x})^{A-1}}\underset{\sum_{\ell=1}^{j}\mathfrak{f}_{{\scriptscriptstyle\ell}}\neq0}{\sum_{\mathfrak{f}_{{\scriptscriptstyle1}},\ldots,\mathfrak{f}_{{\scriptscriptstyle j}}\in\mathfrak{F}}}\Bigg|\frac{\prod_{\ell=1}^{j}\mathfrak{a}_{{\scriptscriptstyle\mathfrak{f}}_{\ell}}}{\sum_{\ell=1}^{j}\mathfrak{f}_{{\scriptscriptstyle\ell}}}\Bigg|\Bigg)\,.
\end{split}
\end{equation}
Since $A>1$, it follows from \eqref{eq:3.23} and \eqref{eq:3.24} that $\mathscr{L}_{j}(\omega)$ exists, and is given by
\begin{equation}\label{eq:3.25}
\mathscr{L}_{j}(\omega)=\lim\limits_{X\to\infty}\frac{\mathscr{M}_{j}(X;\omega_{{\scriptscriptstyle\times}})}{\mathscr{M}^{j/2}_{2}(X;\omega_{{\scriptscriptstyle\times}})}=\Bigg\{\,\,\underset{\sum_{\ell=1}^{j}\mathfrak{f}_{{\scriptscriptstyle\ell}}=0}{\sum_{\mathfrak{f}_{{\scriptscriptstyle1}},\ldots,\mathfrak{f}_{{\scriptscriptstyle j}}\in\mathfrak{F}}}\bigg(\prod_{\ell=1}^{j}\mathfrak{a}_{{\scriptscriptstyle\mathfrak{f}}_{\ell}}\bigg)\Bigg\}\Bigg\{\sum_{\mathfrak{f}\in\mathfrak{F}}|\mathfrak{a}_{{\scriptscriptstyle\mathfrak{f}}}|^{2}\Bigg\}^{-j/2}\,.
\end{equation}
Hence, condition \textbf{(4\textnormal{\textbf{b}})} is satisfied.
Since $\mathscr{L}_{j}(\omega)\leq(M/m)^{j}$, it follows that condition \textbf{(4\textnormal{\textbf{c}})} is satisfied. We conclude that $\omega_{{\scriptscriptstyle\times}}\in\mathbf{\Omega}$.\\
We now prove item \textnormal{\textbf{(ii)}}. Suppose that $\lambda_{{\scriptscriptstyle1}},\ldots,\lambda_{{\scriptscriptstyle n}}$ are linearly independent over $\mathbb{Z}$. Since $\mathfrak{F}$ consists of integer linear combination of the frequencies $\lambda_{{\scriptscriptstyle\ell}}$, it follows from the definition of the coefficients $\mathfrak{a}_{{\scriptscriptstyle\mathfrak{f}}}$ that
\begin{equation}\label{eq:3.26}
\begin{split}
\underset{\sum_{\ell=1}^{j}\mathfrak{f}_{{\scriptscriptstyle\ell}}=0}{\sum_{\mathfrak{f}_{{\scriptscriptstyle1}},\ldots,\mathfrak{f}_{{\scriptscriptstyle j}}\in\mathfrak{F}}}\bigg(\prod_{\ell=1}^{j}\mathfrak{a}_{{\scriptscriptstyle\mathfrak{f}}_{\ell}}\bigg)&=\prod_{\ell=1}^{n}\Bigg\{\,\,\underset{\sum^{j}_{k=1}m_{{\scriptscriptstyle k}}=0}{\sum_{|m_{{\scriptscriptstyle1}}|\leq d_{{\scriptscriptstyle\ell}},\ldots,|m_{{\scriptscriptstyle j}}|\leq d_{{\scriptscriptstyle\ell}}}}\prod_{k=1}^{j}a_{m_{{\scriptscriptstyle k},\ell}}\Bigg\}\\
&=\prod_{\ell=1}^{n}\int\limits_{0}^{1}\Bigg(\sum^{d_{{\scriptscriptstyle\ell}}}_{m=-d_{{\scriptscriptstyle\ell}}}a_{{\scriptscriptstyle m,\ell}}\exp{(2\pi im_{{\scriptscriptstyle}}t)}\Bigg)^{j}\textit{d}t\\
&=\prod_{\ell=1}^{n}\int\limits_{0}^{1}\varphi^{j}_{{\scriptscriptstyle\ell}}(t)\textit{d}t=\int\limits_{\mathbf{t}\in[0,1)^{n}}\Phi^{j}_{{\scriptscriptstyle\times}}(\mathbf{t})\textit{d}\mathbf{t}\,.
\end{split}
\end{equation}
From \eqref{eq:3.25} and \eqref{eq:3.26} we obtain \eqref{eq:3.13}. This concludes the proof of Lemma $7$.
\end{proof}
\noindent
\textbf{Acknowledgements.} I would like to thank Yuk-Kam Lau for several fruitful conversations.

\end{document}